\documentclass[10pt,twoside]{article}
\usepackage{mathrsfs}
\usepackage{amssymb}
\usepackage{amsmath}
\usepackage[all]{xy}
\usepackage{amsthm}
\numberwithin{equation}{section}

\setlength{\textwidth}{160mm} \setlength{\textheight}{240mm}
\setlength{\topmargin}{-8mm} \setlength{\oddsidemargin}{0mm}\setlength{\evensidemargin}{0mm}

\newtheorem{theorem}{Theorem}[section]
\newtheorem{proposition}[theorem]{Proposition}
\newtheorem{definition}[theorem]{Definition}

\newtheorem{lemma}[theorem]{Lemma}

\newtheorem{corollary}[theorem]{Corollary}

%
%


\newcommand{\Tor}{\operatorname{Tor}}

\newcommand{\Hom}{\operatorname{Hom}}
\newcommand{\Ext}{\operatorname{Ext}}

\newcommand{\WI}{\mathcal{WI}}
\newcommand{\WF}{\mathcal{WF}}

\newcommand{\wid}{\operatorname{wid}}
\newcommand{\wfd}{\operatorname{wfd}}
\newcommand{\ra}{\rightarrow}


\title{ \bf Homological properties of modules with  finite weak injective and weak flat dimensions\thanks{2010 Mathematics Subject Classification: 16E30, 18G25, 16E10.}
\thanks{Keywords: weak injective module, weak flat module, weak derived functor, weak injective cover, weak injective envelope. }}
\vspace{0.2cm}

\author {Tiwei Zhao\thanks{E-mail address: tiweizhao@hotmail.com} \\
{\it \footnotesize Department of Mathematics, Nanjing University, Nanjing 210093, Jiangsu Province, P. R. China}}
\date{ }

\begin{document}

\baselineskip=16pt


\maketitle

\begin{abstract}
 In this paper,  we define a class of  relative derived functors in terms of left or right  weak flat resolutions to compute the weak flat dimension of modules. Moreover, we investigate two classes of modules larger than that of weak injective and weak flat modules, study the existence of covers and preenvelopes, and give some applications.
\end{abstract}

\pagestyle{myheadings}
\markboth{\rightline {\scriptsize   T. Zhao}}
         {\leftline{\scriptsize Homological properties of modules with  finite weak injective and weak flat dimensions}}


\section{Introduction} 

Throughout $R$ is an associative ring with identity and all modules are unitary.  Unless stated otherwise, an $R$-module
will be understood to be a left $R$-module.  We denote by $M^+$ the character module of a module $M$. For unexplained concepts and notations, we refer the
readers to \cite{EJ,Ro}.

In the theory of homological algebra, injective modules play an important and basic role in various aspects, especially for characterizing Noetherian rings. In 1970, to generalize the homological properties from Noetherian rings to coherent rings, Stenstr\"{o}m introduced the notion of FP-injective modules  in \cite{St}. In this process, finitely generated modules should in general be replaced by finitely presented modules.  Recall  that an $R$-module $M$ is called FP-injective (or absolutely pure \cite{Ma}) if $\mbox{Ext}^1_R(L,M)=0$ for every finitely presented  $R$-module $L$, and accordingly, the FP-injective dimension of $M$, denoted by $\mbox{FP-id}_R(M)$, is defined to be the smallest non-negative integer $n$ such that $\mbox{Ext}^{n+1}_R(L,M)=0$ for every finitely presented $R$-module $L$. If no such that $n$ exists, one defines $\mbox{FP-id}_R(M)=\infty$. It is well-known that the FP-injective modules play an important role in characterizing coherent rings, and have nice properties over coherent rings analogous to that of injective modules over Noetherian rings (e.g. \cite{EJ,Ma,Mao,MD,Me,ODL,Pi,St}).

Recently, as a nice generalization of Stenstr\"{o}m's viewpoint, Gao and Wang introduced the notions of weak injective and weak flat modules (\cite{GW}). In this process, finitely presented modules were  replaced by  super finitely
presented modules (see \cite{GW0} or Sec. 2 for the definition). These classes of modules were also investigated by Bravo et al. in \cite{BGH} and, in their paper, these two classes of modules were respectively called absolutely clean and level modules, which come from the ideas of absolutely pure and flat modules, respectively. In \cite{BGH,GH,GW} many results on homological aspect were generalized from coherent rings to arbitrary
rings. So, undoubtedly, these classes of modules will play a crucial role in homological algebra.

Our attempt in this paper is to further investigate the  homological properties of modules with finite weak injective and weak flat dimensions. The structure of this paper is as follows:

In Section 2, we first recall some terminologies and  preliminaries. In \cite{GH}, Gao and Huang investigated the relative derived functors $\mbox{Ext}^{\mathcal{WI}}_n(-,-)$ and $\mbox{Tor}^n_{\mathcal{W}}(-,-)$ in terms of left or right $\mathcal{WI}$-resolution or right $\mathcal{WF}$-resolution to compute the  weak injective dimension of modules.  Here, we introduce another relative derived functor, $\mbox{Ext}^{\mathcal{WF}}_n(-,-)$, in terms of left or right $\mathcal{WF}$-resolutions to compute the weak flat dimension of modules. After introducing these relative derived functors, in the rest of this section, we will establish some isomorphism theorems about the relative (co)homological groups
\begin{gather*}
\mbox{Ext}^{\mathcal{WI}}_n(M,N), \ \mbox{Ext}^{\mathcal{WF}}_n(M,N), \mbox{ and } \mbox{Tor}^n_{\mathcal{W}}(M,N)
\end{gather*}
 and explore their intimate connections (see Proposition $\ref{twoiso}$, $\ref{threeiso}$ and $\ref{111}$).

 In Section 3, as some applications, we characterize the weak flat dimension of modules in terms of the relative derived functor
$ \mbox{Ext}^{\mathcal{WF}}_n(-,-)$
which completes the analogous results of Gao and Huang in \cite{GH}.

In the relative homological algebra,  it has become an active branch of algebra to study the existence of envelopes and covers by different classes of
modules, especially after the appearance
of these concepts in \cite{En} (with the terminology envelopes and covers)
and in \cite{AS} (with the terminology minimal left and right
approximations). For instance, Enochs et al. in \cite{AEGO} studied the existence of envelopes and covers by modules of finite injective and projective dimensions. They showed that every module has an $\mathcal{I}_{\leq n}$-preenvelope (\cite[Prop. 3.1]{AEGO}) and, over a left perfect ring, every module has a $\mathcal{P}_{\leq n}$-cover (\cite[Cor. 4.3]{AEGO}) where $\mathcal{I}_{\leq n}$ and $\mathcal{P}_{\leq n}$ denote the classes of modules of injective
and projective dimensions at most $n$ respectively.  Hence, as a more general and new setting,  in Section 4,  we investigate two classes of modules with richer contents, namely $\WI_{\leq n}$ and $\WF_{\leq n}$ for some non-negative integer $n$. We show that

$\bullet$ $\WI_{\leq n}$ are both covering and preenveloping, and

$\bullet$ $\WF_{\leq n}$  are both covering and preenveloping.\\
Some descriptions of weak injective envelopes and covers (in terms of the left orthogonal class of weak injective modules) are given in the rest of this section. For example, we show that

(1) Every  $R$-module has a weak injective cover with the unique mapping property if and only if  $\operatorname{l.sp.gldim}(R)\leq 2$ (see Proposition $\ref{11111}$), where $\operatorname{l.sp.gldim}(R)$ denotes the left super finitely presented dimension of a ring $R$;

(2) If ${^\perp\WI}$ is closed under pure quotients, then $\WI$ is enveloping, etc.

\section{Weak derived functors of $\mbox{Hom}_R(-,-)$ and $-\otimes_R-$ and their connections}

\subsection{Weak injective, weak flat modules and dimensions}

We first  recall some terminologies and  preliminaries. For more details, we refer the readers to \cite{GH,GW}.

 Recall from \cite{GW0} that an $R$-module $M$ is called \emph{super finitely
presented} if there exists an exact sequence $\cdots \rightarrow
F_1\rightarrow F_0\rightarrow M\rightarrow 0$, where each $F_i$ is
finitely generated and projective. Following this, Gao and Wang gave the
definition of weak injective modules in terms of super finitely
presented modules as follows, which is a proper generalization of the
 FP-injective modules by \cite[Ex. 2.8]{GW}.

\begin{definition}{\rm(\cite[Defs. 2.1 and 3.2]{GW})
An $R$-module $M$ is called \emph{weak injective} if $\mbox{Ext}^1_R(N,M)=0$ for every super finitely presented  $R$-module $N$. A right $R$-module $M$ is called \emph{weak flat} if $\mbox{Tor}_1^R(M,N)=0$ for every super finitely presented  $R$-module $N$.

Accordingly, the \emph{weak injective dimension} of an $R$-module $M$, denoted by $\wid_R(M)$, is defined to be the smallest non-negative integer $n$ such that $\mbox{Ext}^{n+1}_R(N,M)=0$ for every super finitely presented  $R$-module $N$. If no such that $n$ exists, one defines $\wid_R(M)=\infty$. The \emph{weak flat dimension} of a right $R$-module $M$, denoted by $\wfd_R(M)$, is defined to be the smallest non-negative integer $n$ such that $\mbox{Tor}_{n+1}^R(M,N)=0$ for every super finitely presented  $R$-module $N$.  If no such that $n$ exists, one defines $\wfd_R(M)=\infty$.}
\end{definition}

For a given ring $R$, we  denote by $\mathcal{WI}$ and
$\mathcal{WF}$  the classes of weak injective and weak flat modules,
respectively.

Following \cite[Thm. 2.10 and Rem. 2.2(2)]{GW}, an $R$-module $M$ is weak injective if and only if $M^+$ is weak flat; a right $R$-module $M$ is weak flat if and only if $M^+$ is weak injective. More generally, we have

\begin{proposition}\label{lem} Let $R$ and $S$ be rings and $M$ an $S$-$R$-bimodule. Then

$(1)$  $\wid_S(M)=\wfd_S(\operatorname{Hom}_R(M,E))$ for a faithfully injective right $R$-module $E$;

$(2)$  $\wfd_R(M)=\wid_R(\operatorname{Hom}_S(M,E))$ for a faithfully injective left $S$-module $E$.
\end{proposition}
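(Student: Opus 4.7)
The plan is to deduce each equality from a natural isomorphism that moves Ext to Tor (or vice versa) through the dualising functor $\operatorname{Hom}(-,E)$, and then to invoke faithful injectivity of $E$ to turn that isomorphism into a vanishing equivalence for each super finitely presented test module. This is the same mechanism behind the duality $M\in\WI \Leftrightarrow M^{+}\in\WF$ cited from \cite{GW}: part~(2) is a direct generalisation, while part~(1) is the less automatic one and is where the work sits.

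For (2), fix a super finitely presented left $R$-module $N$ and choose a resolution $P_\bullet \to N$ with each $P_i$ finitely generated projective, which exists by definition of super fp. The standard hom-tensor adjunction gives, levelwise,
\begin{equation*}
\operatorname{Hom}_S(M\otimes_R P_i,E)\;\cong\;\operatorname{Hom}_R\bigl(P_i,\operatorname{Hom}_S(M,E)\bigr),
\end{equation*}
an isomorphism of complexes. Since $E$ is injective, $\operatorname{Hom}_S(-,E)$ is exact, and an exact contravariant functor commutes with (co)homology, so taking cohomology yields
\begin{equation*}
\operatorname{Hom}_S\bigl(\operatorname{Tor}_n^R(M,N),E\bigr)\;\cong\;\operatorname{Ext}_R^n\bigl(N,\operatorname{Hom}_S(M,E)\bigr).
\end{equation*}
Because $E$ is faithfully injective, one side vanishes iff the other does, and running this over all super fp $N$ gives $\wfd_R(M)=\wid_R(\operatorname{Hom}_S(M,E))$.

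For (1), the same blueprint applies but the required levelwise iso is
\begin{equation*}
\operatorname{Hom}_R\bigl(\operatorname{Hom}_S(P,M),E\bigr)\;\cong\;\operatorname{Hom}_R(M,E)\otimes_S P,
\end{equation*}
which is \emph{not} a tensor-hom adjunction; it is a duality iso that holds naturally in $P$ only when $P$ is finitely generated projective. I would verify it for $P=S$ directly (both sides collapse to $\operatorname{Hom}_R(M,E)$), extend additively to $P=S^n$, and descend to direct summands. Since $N$ is super fp, its resolution $P_\bullet$ consists of f.g.\ projectives and the iso applies at every level. The cochain complex $\operatorname{Hom}_S(P_\bullet,M)$ of right $R$-modules has cohomology $\operatorname{Ext}_S^\bullet(N,M)$; applying the exact contravariant functor $\operatorname{Hom}_R(-,E)$, commuting it past cohomology, and rewriting each term through the duality iso yields
\begin{equation*}
\operatorname{Tor}_n^S\bigl(\operatorname{Hom}_R(M,E),N\bigr)\;\cong\;\operatorname{Hom}_R\bigl(\operatorname{Ext}_S^n(N,M),E\bigr),
\end{equation*}
and faithful injectivity of $E$ finishes the argument exactly as in (2).

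The main obstacle is the iso used in part (1): it genuinely depends on the finite generation of each $P_i$, which is precisely where the super finitely presented hypothesis is consumed — with only an arbitrary projective resolution of $N$ the duality would break down. Everything else is routine bookkeeping: tracking which left/right bimodule structure lands on which side so that Ext, Tor and the two Homs are taken on the correct rings, and then invoking the standard fact that an exact functor commutes with (co)homology.
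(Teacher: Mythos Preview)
Your proposal is correct and follows essentially the same approach as the paper: both arguments reduce the equality to the natural isomorphisms
\[
\operatorname{Tor}_n^S\bigl(\operatorname{Hom}_R(M,E),N\bigr)\cong \operatorname{Hom}_R\bigl(\operatorname{Ext}_S^n(N,M),E\bigr)
\quad\text{and}\quad
\operatorname{Hom}_S\bigl(\operatorname{Tor}_n^R(M,N),E\bigr)\cong \operatorname{Ext}_R^n\bigl(N,\operatorname{Hom}_S(M,E)\bigr),
\]
and then use faithful injectivity of $E$ to translate vanishing across them for each super finitely presented test module. The only difference is cosmetic: the paper simply cites these isomorphisms from \cite[Lem.~1.2.11]{GT} (which is stated there for $N$ admitting a resolution by finitely generated projectives, exactly the super fp hypothesis), whereas you spell out the proof of the first one via the levelwise duality $\operatorname{Hom}_R(\operatorname{Hom}_S(P,M),E)\cong \operatorname{Hom}_R(M,E)\otimes_S P$ for finitely generated projective $P$ --- your remark that this is where the super fp condition is genuinely consumed is accurate and matches the scope of the cited lemma.
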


\begin{proof}
(1) For any super finitely presented left $S$-module $L$ and any $n\geq 0$, $\mbox{Tor}^S_n(\mbox{Hom}_R(M,E),L)\cong \mbox{Hom}_R(\mbox{Ext}^n_S(L,M),E)$ by \cite[Lem. 1.2.11]{GT}. So we have
$$
\begin{aligned}
\wid_S(M)\leq n\Leftrightarrow & \mbox{Ext}_S^{n+1}(L,M)=0 \mbox{ for every super finitely presented left $S$-module $L$}\\
\Leftrightarrow & \mbox{Tor}_{n+1}^S(\mbox{Hom}_R(M,E),L)=0  \mbox{ for every super finitely presented left $S$-module $L$}\\
\Leftrightarrow & \wfd_S(\mbox{Hom}_R(M,E))\leq n.
\end{aligned}
$$
Therefore, $\wid_S(M)=\wfd_S(\operatorname{Hom}_R(M,E))$.

$(2)$ By \cite[Lem. 1.2.11]{GT} or \cite[Cor. 10.63]{Ro}, $\mbox{Hom}_S(\mbox{Tor}^R_n(M,L),E)\cong \mbox{Ext}^n_R(L,\mbox{Hom}_S(M,E))$ for any left $R$-module $L$ and any $n\geq 0$. So we have
$$
\begin{aligned}
\wfd_R(M)\leq n\Leftrightarrow & \mbox{Tor}^R_{n+1}(M,L)=0 \mbox{ for every super finitely presented left $R$-module $L$}\\
\Leftrightarrow & \mbox{Ext}^{n+1}_R(L,\mbox{Hom}_S(M,E))=0  \mbox{ for every super finitely presented left $R$-module $L$}\\
\Leftrightarrow & \wid_R(\operatorname{Hom}_S(M,E))\leq n.
\end{aligned}
$$
Therefore, $\wfd_R(M)=\wid_R(\operatorname{Hom}_S(M,E))$.
\end{proof}

\begin{corollary} Let $R$ and $S$ be rings and $M$ an $S$-$R$-bimodule. Then

$(1)$ $M$ is weak injective as a left $S$-module if and only if $\operatorname{Hom}_R(M,E)$ is a weak flat right $S$-module for a faithfully injective right $R$-module $E$;

$(2)$ $M$ is weak flat as a right $R$-module if and only if $\operatorname{Hom}_S(M,E)$ is a weak injective left $R$-module for a faithfully injective left $S$-module $E$.
\end{corollary}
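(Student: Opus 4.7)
The plan is to derive both parts of the Corollary immediately from Proposition~\ref{lem} by specialising to dimension zero. By definition, a left $S$-module is weak injective precisely when its weak injective dimension is at most $0$, and a right $S$-module is weak flat precisely when its weak flat dimension is at most $0$; the analogous characterisations hold over $R$.

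For part (1), Proposition~\ref{lem}(1) asserts the equality $\wid_S(M)=\wfd_S(\Hom_R(M,E))$ for any faithfully injective right $R$-module $E$. Hence $M$ is weak injective as a left $S$-module, i.e.\ $\wid_S(M)\leq 0$, if and only if $\wfd_S(\Hom_R(M,E))\leq 0$, i.e.\ $\Hom_R(M,E)$ is weak flat as a right $S$-module. Part (2) is entirely parallel: Proposition~\ref{lem}(2) gives $\wfd_R(M)=\wid_R(\Hom_S(M,E))$, and setting $n=0$ converts the condition ``$M$ is weak flat as a right $R$-module'' into ``$\Hom_S(M,E)$ is weak injective as a left $R$-module''.

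The only bookkeeping point is to verify the sides of the module structures induced on the $\Hom$-groups: since $M$ is an $S$-$R$-bimodule, the left $S$-action on $M$ endows $\Hom_R(M,E)$ with a right $S$-module structure, and the right $R$-action on $M$ endows $\Hom_S(M,E)$ with a left $R$-module structure, matching the hypotheses of Proposition~\ref{lem}. There is no real obstacle; the Corollary is essentially the ``$n=0$ shadow'' of Proposition~\ref{lem}, and the proof amounts to recording this specialisation.
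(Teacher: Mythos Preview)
Your proof is correct and matches the paper's approach exactly: the corollary is stated in the paper immediately after Proposition~\ref{lem} with no proof, precisely because it is the $n=0$ specialisation you describe. Your additional remark verifying the induced module structures on the $\Hom$-groups is a helpful sanity check that the paper leaves implicit.
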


\begin{corollary}\label{cor}
$(1)$ For a left $R$-module $M$, we have $\wid_R(M)=\wfd_R(M^+)$;

$(2)$ For a right $R$-module $M$, we have $\wfd_R(M)=\wid_R(M^+)$.
\end{corollary}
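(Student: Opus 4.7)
The plan is to obtain both statements as direct specializations of Proposition \ref{lem}, using the fact that the character module $M^{+}=\operatorname{Hom}_{\mathbb{Z}}(M,\mathbb{Q}/\mathbb{Z})$ is built from the faithfully injective $\mathbb{Z}$-module $\mathbb{Q}/\mathbb{Z}$.

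For part $(1)$, I would take $S=R$ in Proposition \ref{lem}$(1)$ and set the auxiliary ring there equal to $\mathbb{Z}$. Any left $R$-module $M$ is tautologically an $R$-$\mathbb{Z}$-bimodule, so with $E=\mathbb{Q}/\mathbb{Z}$ (which is a faithfully injective $\mathbb{Z}$-module) the hypotheses apply, and the conclusion $\wid_{S}(M)=\wfd_{S}(\operatorname{Hom}_{R}(M,E))$ reads exactly as $\wid_{R}(M)=\wfd_{R}(M^{+})$ since $\operatorname{Hom}_{\mathbb{Z}}(M,\mathbb{Q}/\mathbb{Z})=M^{+}$ by definition. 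For part $(2)$, I would apply Proposition \ref{lem}$(2)$ symmetrically, this time taking $S=\mathbb{Z}$ and viewing a right $R$-module $M$ as a $\mathbb{Z}$-$R$-bimodule. Again $\mathbb{Q}/\mathbb{Z}$ is faithfully injective as a left $\mathbb{Z}$-module, and the equality $\wfd_{R}(M)=\wid_{R}(\operatorname{Hom}_{S}(M,E))$ becomes $\wfd_{R}(M)=\wid_{R}(M^{+})$.

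There is essentially no obstacle here: both halves are one-line deductions. The only things worth checking in passing are that $\mathbb{Q}/\mathbb{Z}$ genuinely qualifies as a faithfully injective $\mathbb{Z}$-module (standard) and that the bimodule structures are the trivial ones, so that the left or right $R$-action on $\operatorname{Hom}_{\mathbb{Z}}(M,\mathbb{Q}/\mathbb{Z})$ coincides with the usual action on $M^{+}$. Because these are well-known, the proof should be stated in just two or three lines, one for each part.
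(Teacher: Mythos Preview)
Your proposal is correct and is exactly the intended argument: the paper states Corollary~\ref{cor} without proof, treating it as the obvious specialization of Proposition~\ref{lem} obtained by setting the auxiliary ring equal to $\mathbb{Z}$ and $E=\mathbb{Q}/\mathbb{Z}$. The only cosmetic point is that your use of the letters $R$ and $S$ overlaps between the proposition's notation and the corollary's, so when writing it out you may want to rename the rings (e.g.\ apply Proposition~\ref{lem}(1) with $(S,R)=(R,\mathbb{Z})$) to avoid ambiguity.
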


\subsection{Weak derived functors}

\begin{definition}\label{def-env} {\rm(\cite[Def. 6.1.1]{EJ}) Let $\mathcal {C}$ be a class of  $R$-modules. By a
\emph{$\mathcal {C}$-preenvelope} of an $R$-module $M$, one means a morphism
$\varphi: M \rightarrow C$ with $C\in \mathcal {C}$ such that for
any morphism $f: M\rightarrow C'$ with $C'\in \mathcal {C}$,
there exists a morphism $g:C\rightarrow C'$ such that
$g\varphi=f$, that is, there is the following commutative diagram:
\begin{equation}\label{cover diag}
  \xymatrix{
  M \ar[dr]_{f} \ar[r]^{\varphi}
                & C \ar@{.>}[d]^{g}  \\
                & C'             }
\end{equation}
In addition, if when $C'=C$ and $f=\varphi$,
the only such $g$ are automorphisms of $C$, then $\varphi: M
\rightarrow C$ is called a \emph{$\mathcal {C}$-envelope} of $M$.}\end{definition}

Dually, one may give the notion of \emph{$\mathcal {C}$-(pre)cover}
of a module. Note that $\mathcal {C}$-envelopes and $\mathcal
{C}$-covers may not exist in general, but if they exist, they are
unique up to isomorphism (\cite{EJ}).

In what follows, we denote by $_R\mathcal{M}$ (resp.  $\mathcal{M}_R$) the category of left (resp. right) $R$-modules.

Following \cite[Thm. 3.4]{GH}, every  $R$-module has a weak injective preenvelope. So every  $R$-module $M$ has a right $\mathcal{WI}$-resolution, that is, there is a complex $$0\rightarrow M\rightarrow W^0\rightarrow W^1\rightarrow \cdots$$ with each $W^i$ weak injective such that the functor $\mbox{Hom}_R(-,W)$ makes this complex exact for every weak injective  $R$-module $W$. Note that since every injective  $R$-module is weak injective, the above complex is also exact.

On the other hand, every  $R$-module has a weak injective (pre)cover by \cite[Thm. 3.1]{GH}. So every  $R$-module $M$ has a left $\mathcal{WI}$-resolution, that is, there is a complex $$\cdots\rightarrow W_1\rightarrow W_0\rightarrow M\rightarrow 0\mbox{ (not necessarily exact) }$$ with each $W_i$ weak injective such that the functor $\mbox{Hom}_R(W,-)$ makes this complex exact for every weak injective  $R$-module $W$.

Thus, by \cite[Def. 8.2.13]{EJ} or \cite[Sec. 4]{GH}, $\mbox{Hom}_R(-,-)$ is left balanced on $_R\mathcal{M}\times {_R\mathcal{M}}$ by $\mathcal{WI}\times \mathcal{WI}$. Let $$\mbox{Ext}^{\mathcal{WI}}_n(-,-)$$ denote the $n$th left derived functor of $\mbox{Hom}_R(-,-)$ with respect to the pair $\mathcal{WI}\times \mathcal{WI}$. Then for any two  $R$-modules $M$ and $N$, $$\mbox{Ext}^{\mathcal{WI}}_n(M,N)$$ may be computed by using a right $\mathcal{WI}$-resolution of $M$ or a left $\mathcal{WI}$-resolution of $N$ (\cite{GH}).

Following \cite[Sec. 3]{GH} or \cite[Thm. 12]{ET}, every right  $R$-module has a weak flat (pre)cover. So every right $R$-module $M$ has a left $\mathcal{WF}$-resolution, that is, there is a complex $$\cdots\rightarrow W_1\rightarrow W_0\rightarrow M\rightarrow 0$$  with each $W_i$ weak flat such that the functor $\mbox{Hom}_R(W,-)$ makes this complex exact for every weak flat right $R$-module $W$. Since every projective right $R$-module is weak flat, this complex is also exact.

On the other hand, every right $R$-module has a weak flat preenvelope by \cite[Thm. 2.15]{GW}. So every right $R$-module $M$ has a right $\mathcal{WF}$-resolution, that is, there is a complex $$0\rightarrow M\rightarrow W^0\rightarrow W^1\rightarrow \cdots\mbox{  (not necessarily exact) }$$  with each $W^i$ weak flat such that the functor $\mbox{Hom}_R(-,W)$ makes this complex exact for every weak flat right $R$-module $W$.

Similarly, $\mbox{Hom}_R(-,-)$ is left balanced on $\mathcal{M}_R\times \mathcal{M}_R$ by $\mathcal{WF}\times \mathcal{WF}$. Let $$\mbox{Ext}^{\mathcal{WF}}_n(-,-)$$ denote the $n$th left derived functor of $\mbox{Hom}_R(-,-)$ with respect to the pair $\mathcal{WF}\times \mathcal{WF}$. Then for any two right $R$-modules $M$ and $N$, $$\mbox{Ext}^{\mathcal{WF}}_n(M,N)$$ may be computed by using a right $\mathcal{WF}$-resolution of $M$ or a left $\mathcal{WF}$-resolution of $N$.

We also recall from \cite[Sec. 5]{GH} that the functor $-\otimes_R-$ is right balanced on $\mathcal{M}_R\times {_R\mathcal{M}}$ by $\mathcal{WF}\times \mathcal{WI}$. Let $$\mbox{Tor}^n_{\mathcal{W}}(-,-)$$ denote the $n$th right derived functor of $-\otimes_R-$ with respect to $\mathcal{WF}\times \mathcal{WI}$. Then for any right $R$-module $M$ and any  $R$-module $N$, $$\mbox{Tor}^n_{\mathcal{W}}(M,N)$$ may be computed by using a right $\mathcal{WF}$-resolution of $M$ or a right $\mathcal{WI}$-resolution of $N$.

In the following, we will establish some isomorphism theorems about the relative (co)homological groups $$\mbox{Ext}^{\mathcal{WI}}_n(M,N), \  \mbox{Ext}^{\mathcal{WF}}_n(M,N), \mbox{ and } \mbox{Tor}^n_{\mathcal{W}}(M,N),$$ and explore their intimate connections.

\begin{proposition}\label{twoiso}
Let $R$ and $S$ be rings.

$(1)$ If $L$ is a right $R$-module, $M$ an $R$-$S$-bimodule and $N$ an injective right $S$-module, then we have
$$
\operatorname{Ext}^{\mathcal{WF}}_n(L,\operatorname{Hom}_S(M,N))\cong \operatorname{Hom}_S(\operatorname{Tor}^n_{\mathcal{W}}(L,M),N)
$$
for   any non-negative integer $n$.

$(2)$ If $L$ is an $S$-$R$-bimodule, $M$ a left $R$-module and $N$ an injective left $S$-module, then we have
$$
\operatorname{Ext}^{\mathcal{WI}}_n(M,\operatorname{Hom}_S(L,N))\cong \operatorname{Hom}_S(\operatorname{Tor}^n_{\mathcal{W}}(L,M),N)
$$
for   any non-negative integer $n$.
\end{proposition}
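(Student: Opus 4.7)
The plan is to realise both sides of (1) as the cohomology of a single complex built from a right $\mathcal{WF}$-resolution of $L$, and then to adapt the same strategy to (2) by using a right $\mathcal{WI}$-resolution of $M$ in place of the $\mathcal{WF}$-resolution of $L$. Fix a right $\mathcal{WF}$-resolution $0\to L\to W^0\to W^1\to\cdots$ of $L$, which exists by the material recalled just before the proposition. By the description of $\operatorname{Tor}^n_{\mathcal{W}}(-,-)$ via right $\mathcal{WF}$-resolutions of the first variable, the cohomology of the deleted complex $W^\bullet\otimes_R M$ of right $S$-modules computes $\operatorname{Tor}^n_{\mathcal{W}}(L,M)$.

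Because $N$ is an injective right $S$-module, the functor $\operatorname{Hom}_S(-,N)$ is exact, and therefore commutes with $H^n$. Combined with the Hom--tensor adjunction $\operatorname{Hom}_S(W^i\otimes_R M,N)\cong\operatorname{Hom}_R(W^i,\operatorname{Hom}_S(M,N))$, which uses only that $M$ is an $R$-$S$-bimodule so that $\operatorname{Hom}_S(M,N)$ carries the induced right $R$-module structure, this produces a natural chain
$$
\operatorname{Hom}_S(\operatorname{Tor}^n_{\mathcal{W}}(L,M),N)\cong H^n(\operatorname{Hom}_S(W^\bullet\otimes_R M,N))\cong H^n(\operatorname{Hom}_R(W^\bullet,\operatorname{Hom}_S(M,N))).
$$
It therefore suffices to identify the rightmost term with $\operatorname{Ext}^{\mathcal{WF}}_n(L,\operatorname{Hom}_S(M,N))$.

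This identification is the principal obstacle, since $\operatorname{Hom}_S(M,N)$ need not itself be weak flat. The remedy is to invoke the balancing of $\operatorname{Hom}_R(-,-)$ by $\mathcal{WF}\times\mathcal{WF}$ on $\mathcal{M}_R\times\mathcal{M}_R$ recalled above: pick a left $\mathcal{WF}$-resolution $V_\bullet\to\operatorname{Hom}_S(M,N)\to 0$ and form the double complex $\operatorname{Hom}_R(W^\bullet,V_\bullet)$. Each column $\operatorname{Hom}_R(W^p,V_\bullet)$ is acyclic in positive degrees, since $W^p\in\mathcal{WF}$ and $V_\bullet$ is $\operatorname{Hom}_R(W,-)$-exact for every weak flat $W$, forcing one spectral sequence to collapse onto $\operatorname{Hom}_R(W^\bullet,\operatorname{Hom}_S(M,N))$; dually, each row $\operatorname{Hom}_R(W^\bullet,V_q)$ is acyclic in positive degrees because $V_q\in\mathcal{WF}$ and $0\to L\to W^\bullet$ is $\operatorname{Hom}_R(-,W)$-exact for weak flat $W$, so the other spectral sequence collapses onto $\operatorname{Hom}_R(L,V_\bullet)$, whose cohomology is $\operatorname{Ext}^{\mathcal{WF}}_n(L,\operatorname{Hom}_S(M,N))$ by definition. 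Equating the two totals finishes (1). Part (2) follows by the symmetric argument: begin with a right $\mathcal{WI}$-resolution $0\to M\to W^\bullet$ of $M$, apply $L\otimes_R-$ to obtain a complex of left $S$-modules whose cohomology is $\operatorname{Tor}^n_{\mathcal{W}}(L,M)$, then apply $\operatorname{Hom}_S(-,N)$, use the adjunction $\operatorname{Hom}_S(L\otimes_R W^i,N)\cong\operatorname{Hom}_R(W^i,\operatorname{Hom}_S(L,N))$, and conclude via the analogous double-complex argument against the $\mathcal{WI}\times\mathcal{WI}$-balancing of $\operatorname{Hom}_R(-,-)$ on ${_R\mathcal{M}}\times{_R\mathcal{M}}$.
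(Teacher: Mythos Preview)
Your argument is correct and follows the same route as the paper: take a right $\mathcal{WF}$-resolution of $L$ (resp.\ a right $\mathcal{WI}$-resolution of $M$), apply $-\otimes_R M$ (resp.\ $L\otimes_R-$), then use exactness of $\operatorname{Hom}_S(-,N)$ together with Hom--tensor adjunction. The only difference is that you treat the identification $H_n\bigl(\operatorname{Hom}_R(W^\bullet,\operatorname{Hom}_S(M,N))\bigr)\cong\operatorname{Ext}^{\mathcal{WF}}_n(L,\operatorname{Hom}_S(M,N))$ as a ``principal obstacle'' and supply a double-complex/spectral-sequence argument, whereas the paper writes it as a bare equality: the balancing of $\operatorname{Hom}_R(-,-)$ by $\mathcal{WF}\times\mathcal{WF}$, already recorded in Section~2.2, says precisely that $\operatorname{Ext}^{\mathcal{WF}}_n(L,X)$ may be computed from a right $\mathcal{WF}$-resolution of $L$ for \emph{any} right $R$-module $X$, with no hypothesis on $X$. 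Your extra paragraph is thus a (valid) reproof of the balancing statement rather than a new ingredient.
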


\begin{proof}
(1) Let $\mathscr{W}^{\mathcal{F}}_L:0\rightarrow W^0\rightarrow
W^1\rightarrow W^2\rightarrow \cdots$ be a deleted right
$\mathcal{WF}$-resolution of $L$. Then we have the following complex
$$
\mathscr{W}^{\mathcal{F}}_L\otimes_RM:0\rightarrow
W^0\otimes_RM\rightarrow W^1\otimes_RM\rightarrow
W^2\otimes_RM\rightarrow \cdots.
$$
Since $N$ is an injective right $S$-module, the functor $\mbox{Hom}_S(-,N)$ is exact. Hence we have the following isomorphisms:
$$
\begin{array}{cc}
\operatorname{Hom}_S(\operatorname{Tor}^n_{\mathcal{W}}(L,M),N)&\!\!\!\!\!\!\!\!\!\!\!\!\!\!\!\!= \operatorname{Hom}_S(\operatorname{H}^n(\mathscr{W}^{\mathcal{F}}_L\otimes_RM),N)\\
&\!\!\!\!\!\!\!\!\!\!\!\!\!\!\!\!\cong \mbox{H}_n(\mbox{Hom}_S(\mathscr{W}^{\mathcal{F}}_L\otimes_RM,N))\\
&\!\!\!\cong \mbox{H}_n(\mbox{Hom}_R(\mathscr{W}^{\mathcal{F}}_L,\mbox{Hom}_S(M,N)))\\
&\!\!\!\!\!\!\!\!\!\!\!\!\!\!\!\!\!\!\!= \operatorname{Ext}^{\mathcal{WF}}_n(L,\operatorname{Hom}_S(M,N)).
\end{array}
$$

(2) Let $\mathscr{W}^{\mathcal{I}}_M:0\rightarrow W^0\rightarrow
W^1\rightarrow W^2\rightarrow \cdots$ be a deleted right
$\mathcal{WI}$-resolution of $M$. Then we have the following complex
$$
L\otimes_R\mathscr{W}^{\mathcal{I}}_M:0\rightarrow L\otimes_R W^0
\rightarrow L\otimes_R W^1 \rightarrow L\otimes_R W^2 \rightarrow
\cdots.
$$
Since $N$ is an injective left $S$-module, the functor $\mbox{Hom}_S(-,N)$ is exact. Hence we have the following isomorphisms:
$$
\begin{array}{cc}
\operatorname{Hom}_S(\operatorname{Tor}^n_{\mathcal{W}}(L,M),N)&\!\!\!\!\!\!\!\!\!\!\!\!\!\!\!\!= \operatorname{Hom}_S(\operatorname{H}^n(L\otimes_R\mathscr{W}^{\mathcal{I}}_M),N)\\
&\!\!\!\!\!\!\!\!\!\!\!\!\!\!\!\!\cong \mbox{H}_n(\mbox{Hom}_S(L\otimes_R\mathscr{W}^{\mathcal{I}}_M,N))\\
&\!\!\!\cong \mbox{H}_n(\mbox{Hom}_R(\mathscr{W}^{\mathcal{I}}_M,\mbox{Hom}_S(L,N)))\\
&\!\!\!\!\!\!\!\!\!\!\!\!\!\!\!\!\!\!\!= \operatorname{Ext}^{\mathcal{WI}}_n(M,\operatorname{Hom}_S(L,N)).
\end{array}
$$
\end{proof}

By setting $S=\mathbb{Z}$ and $N=\mathbb{Q}/\mathbb{Z}$ in Proposition $\ref{twoiso}$, we have the following corollary.

\begin{corollary}\label{twoisocor}
Let $M$ be a right $R$-module and $N$ a left $R$-module. Then
$$
\operatorname{Ext}^{\mathcal{WF}}_n(M,N^+)\cong \operatorname{Tor}^n_{\mathcal{W}}(M,N)^+\cong \operatorname{Ext}^{\mathcal{WI}}_n(N,M^+)
$$
for any non-negative integer $n$.
\end{corollary}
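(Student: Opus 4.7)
The plan is to obtain both isomorphisms as direct specialisations of Proposition \ref{twoiso}, with the choices $S=\mathbb{Z}$ and the injective $\mathbb{Z}$-module taken to be $\mathbb{Q}/\mathbb{Z}$. Since $\mathbb{Q}/\mathbb{Z}$ is a divisible abelian group it is injective in both $_\mathbb{Z}\mathcal{M}$ and $\mathcal{M}_\mathbb{Z}$, and any left or right $R$-module is canonically an $R$-$\mathbb{Z}$-bimodule (or $\mathbb{Z}$-$R$-bimodule), so the hypotheses of both parts of Proposition \ref{twoiso} are automatic.

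For the first isomorphism I would invoke Proposition \ref{twoiso}(1), relabelling the ``$L$'' of that proposition as our right $R$-module $M$, the ``$M$'' of that proposition as our left $R$-module $N$ (regarded as an $R$-$\mathbb{Z}$-bimodule), and the injective right $S$-module as $\mathbb{Q}/\mathbb{Z}$. By the very definition of the character module, $\operatorname{Hom}_\mathbb{Z}(N,\mathbb{Q}/\mathbb{Z})=N^+$ and $\operatorname{Hom}_\mathbb{Z}(\operatorname{Tor}^n_{\mathcal{W}}(M,N),\mathbb{Q}/\mathbb{Z})=\operatorname{Tor}^n_{\mathcal{W}}(M,N)^+$, whence Proposition \ref{twoiso}(1) specialises to
$$
\operatorname{Ext}^{\mathcal{WF}}_n(M,N^+)\;\cong\;\operatorname{Tor}^n_{\mathcal{W}}(M,N)^+.
$$

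For the second isomorphism I would apply Proposition \ref{twoiso}(2) analogously, now taking the ``$L$'' of the proposition to be our right $R$-module $M$ (regarded as a $\mathbb{Z}$-$R$-bimodule), the ``$M$'' of the proposition to be our left $R$-module $N$, and again choosing $\mathbb{Q}/\mathbb{Z}$ as the injective left $\mathbb{Z}$-module. Then $\operatorname{Hom}_\mathbb{Z}(M,\mathbb{Q}/\mathbb{Z})=M^+$ and $\operatorname{Hom}_\mathbb{Z}(\operatorname{Tor}^n_{\mathcal{W}}(M,N),\mathbb{Q}/\mathbb{Z})=\operatorname{Tor}^n_{\mathcal{W}}(M,N)^+$, so Proposition \ref{twoiso}(2) yields
$$
\operatorname{Ext}^{\mathcal{WI}}_n(N,M^+)\;\cong\;\operatorname{Tor}^n_{\mathcal{W}}(M,N)^+.
$$
Combining the two chains of isomorphisms gives the statement.

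There is no real obstacle here; the corollary is a bookkeeping consequence of Proposition \ref{twoiso}. The only point worth highlighting in the write-up is to make the relabelling between the proposition's ``$L,M,N$'' and the corollary's ``$M,N$'' unambiguous, and to record that $\mathbb{Q}/\mathbb{Z}$ plays the role of the injective $\mathbb{Z}$-module on each side.
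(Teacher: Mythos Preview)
Your proof is correct and follows exactly the same approach as the paper: the corollary is obtained from Proposition~\ref{twoiso} by setting $S=\mathbb{Z}$ and taking $\mathbb{Q}/\mathbb{Z}$ as the injective module. Your added remarks on the relabelling and on why $\mathbb{Q}/\mathbb{Z}$ is injective make the specialisation more explicit than the paper does, but the underlying argument is identical.
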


\begin{proposition}\label{threeiso}
Let $R$ and $S$ be rings.

$(1)$ If $L$ is an $R$-$S$-bimodule, $M$ a projective left $S$-module and $N$ a left $R$-module, then
$$
\operatorname{Ext}^{\mathcal{WI}}_n(L\otimes_SM,N)\cong \operatorname{Hom}_S(M,\operatorname{Ext}^{\mathcal{WI}}_n(L,N))
$$
for any non-negative integer $n$.

$(2)$ If $L$ is an $S$-$R$-bimodule, $M$ a projective right $S$-module and $N$ a right $R$-module, then
$$
\operatorname{Ext}^{\mathcal{WF}}_n(M\otimes_SL,N)\cong \operatorname{Hom}_S(M,\operatorname{Ext}^{\mathcal{WF}}_n(L,N))
$$
for any non-negative integer $n$.

$(3)$ If $L$ is a flat right $S$-module, $M$ an $S$-$R$-bimodule and $N$ a left $R$-module, then
$$
\operatorname{Tor}^n_{\mathcal{W}}(L\otimes_SM,N)\cong L\otimes_S\operatorname{Tor}^n_{\mathcal{W}}(M,N)
$$
for any non-negative integer $n$.
\end{proposition}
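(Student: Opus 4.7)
The plan is to prove each of the three parts by the same recipe: use the balance of the relevant weak derived functor to pick a convenient one-sided resolution, apply a tensor-hom adjunction (resp.\ tensor associativity) termwise, and exploit the projectivity of $M$ (resp.\ flatness of $L$) to slide the auxiliary functor past (co)homology.

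For $(1)$, I would let $\mathscr{W}_N:\cdots\to W_1\to W_0\to 0$ be a deleted left $\mathcal{WI}$-resolution of $N$. By the $\mathcal{WI}\times\mathcal{WI}$-balance of $\operatorname{Hom}_R(-,-)$ recalled in Section 2.2, one has $\operatorname{Ext}^{\mathcal{WI}}_n(L\otimes_S M,N)=\operatorname{H}_n(\operatorname{Hom}_R(L\otimes_S M,\mathscr{W}_N))$. Applying the tensor-hom adjunction termwise gives the isomorphism of complexes
\[
\operatorname{Hom}_R(L\otimes_S M,\mathscr{W}_N)\;\cong\;\operatorname{Hom}_S(M,\operatorname{Hom}_R(L,\mathscr{W}_N)),
\]
and the projectivity of $M$ as a left $S$-module makes $\operatorname{Hom}_S(M,-)$ exact, so it commutes with $\operatorname{H}_n$. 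The inner homology $\operatorname{H}_n(\operatorname{Hom}_R(L,\mathscr{W}_N))$ equals $\operatorname{Ext}^{\mathcal{WI}}_n(L,N)$, again by balance, yielding the claim.

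Part $(2)$ is handled by the mirror recipe, using a deleted left $\mathcal{WF}$-resolution of the right $R$-module $N$, the right-module tensor-hom adjunction $\operatorname{Hom}_R(M\otimes_S L,-)\cong\operatorname{Hom}_S(M,\operatorname{Hom}_R(L,-))$ (where $\operatorname{Hom}_R(L,-)$ is regarded as a right $S$-module via the left $S$-action on $L$), and the exactness of $\operatorname{Hom}_S(M,-)$ coming from the projectivity of $M$. The $\mathcal{WF}\times\mathcal{WF}$-balance of $\operatorname{Hom}_R(-,-)$ on $\mathcal{M}_R\times\mathcal{M}_R$ then identifies the inner homology with $\operatorname{Ext}^{\mathcal{WF}}_n(L,N)$.

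For $(3)$ the plan is dual. Take a deleted right $\mathcal{WI}$-resolution $\mathscr{W}^{\mathcal{I}}_N:0\to W^0\to W^1\to\cdots$ of $N$, so that $\operatorname{Tor}^n_{\mathcal{W}}(L\otimes_S M,N)=\operatorname{H}^n((L\otimes_S M)\otimes_R\mathscr{W}^{\mathcal{I}}_N)$. Associativity of the tensor product identifies this complex with $L\otimes_S(M\otimes_R\mathscr{W}^{\mathcal{I}}_N)$, and the flatness of $L$ over $S$ lets $L\otimes_S-$ slip past $\operatorname{H}^n$, producing $L\otimes_S\operatorname{H}^n(M\otimes_R\mathscr{W}^{\mathcal{I}}_N)=L\otimes_S\operatorname{Tor}^n_{\mathcal{W}}(M,N)$. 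No step is a serious obstacle; the only place that requires care is the appeal to balance to legitimise the one-sided computations on the left-hand sides, which is exactly where the results recalled in Section 2.2 are indispensable.
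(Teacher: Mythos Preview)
Your proposal is correct and follows essentially the same route as the paper: in each part you pick a left (resp.\ right) resolution of $N$, apply tensor--hom adjunction (resp.\ tensor associativity) termwise, and use the exactness of $\operatorname{Hom}_S(M,-)$ or $L\otimes_S-$ to commute the auxiliary functor with homology. The paper's proof is identical in structure, only phrasing the first and last identifications as definitions rather than invoking balance explicitly.
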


\begin{proof}
(1) Let $\widehat{\mathscr{W}}^{\mathcal{I}}_N:\cdots \rightarrow
W_2\rightarrow W_1\rightarrow W_0\rightarrow 0$ be a deleted left
$\mathcal{WI}$-resolution of $N$. Since  $M$ is a projective left
$S$-module, the functor $\mbox{Hom}_S(M,-)$ is exact, and hence we
have the following isomorphisms:
$$
\begin{array}{cc}\mbox{Ext}^{\mathcal{WI}}_n(L\otimes_SM,N)&\!\!\!\!\!\!\!\!\!\!\!\!\!\!\!\!=\mbox{H}_n(\mbox{Hom}_R(L\otimes_SM,\widehat{\mathscr{W}}^{\mathcal{I}}_N))\\
&\!\!\!\cong\mbox{H}_n(\mbox{Hom}_S(M,\mbox{Hom}_R(L,\widehat{\mathscr{W}}^{\mathcal{I}}_N)))\\
&\!\!\!\cong\mbox{Hom}_S(M,\mbox{H}_n(\mbox{Hom}_R(L,\widehat{\mathscr{W}}^{\mathcal{I}}_N)))\\
&\!\!\!\!\!\!\!\!\!\!\!\!\!\!\!\!\!\!=\operatorname{Hom}_S(M,\operatorname{Ext}^{\mathcal{WI}}_n(L,N)).\end{array}
$$

(2) Let $\widehat{\mathscr{W}}^{\mathcal{F}}_N:\cdots \rightarrow
W_2\rightarrow W_1\rightarrow W_0\rightarrow 0$ be a deleted left
$\mathcal{WF}$-resolution of $N$. Since  $M$ is a projective left
$S$-module, the functor $\mbox{Hom}_S(M,-)$ is exact, and hence we
have the following isomorphisms:
$$
\begin{array}{cc}\mbox{Ext}^{\mathcal{WF}}_n(M\otimes_SL,N)&\!\!\!\!\!\!\!\!\!\!\!\!\!\!\!\!=\mbox{H}_n(\mbox{Hom}_R(M\otimes_SL,\widehat{\mathscr{W}}^{\mathcal{F}}_N))\\
&\!\!\!\cong\mbox{H}_n(\mbox{Hom}_S(M,\mbox{Hom}_R(L,\widehat{\mathscr{W}}^{\mathcal{F}}_N)))\\
&\!\!\!\cong\mbox{Hom}_S(M,\mbox{H}_n(\mbox{Hom}_R(L,\widehat{\mathscr{W}}^{\mathcal{F}}_N)))\\
&\!\!\!\!\!\!\!\!\!\!\!\!\!\!\!\!\!\!=\operatorname{Hom}_S(M,\operatorname{Ext}^{\mathcal{WF}}_n(L,N)).\end{array}
$$

(3) Let $\mathscr{W}^{\mathcal{I}}_N:0\rightarrow W^0\rightarrow
W^1\rightarrow W^2\rightarrow \cdots$ be a deleted right
$\mathcal{WI}$-resolution of $N$. Since $L$ is a flat right
$S$-module, the functor $L\otimes_S-$ is exact, and hence we have
the following isomorphisms:
$$
\begin{array}{cc}L\otimes_S\mbox{Tor}_{\mathcal{W}}^n(M,N)&\!\!\!\!\!\!\!\!=L\otimes_S\mbox{H}^n(M\otimes_R\mathscr{W}^{\mathcal{I}}_N)\\
&\!\!\!\cong\mbox{H}^n(L\otimes_S(M\otimes_R\mathscr{W}^{\mathcal{I}}_N))\\
&\!\!\!\cong\mbox{H}^n((L\otimes_SM)\otimes_R\mathscr{W}^{\mathcal{I}}_N)\\
&\!\!\!\!\!\!\!\!\!\!\!\!\!\!=\operatorname{Tor}^n_{\mathcal{W}}(L\otimes_SM,N).\end{array}
$$
\end{proof}

\begin{proposition}\label{111}
Let $R$ be a commutative ring and $M$ a projective $R$-module. For any $R$-modules $L$ and $N$, we have

$(1)$ $$
\operatorname{Ext}^{\mathcal{WI}}_n(L\otimes_RM,N)\cong \operatorname{Hom}_R(M,\operatorname{Ext}^{\mathcal{WI}}_n(L,N))\cong \operatorname{Ext}^{\mathcal{WI}}_n(L,\operatorname{Hom}_R(M,N));
$$

$(2)$
$$
\operatorname{Ext}^{\mathcal{WF}}_n(M\otimes_RL,N)\cong \operatorname{Hom}_R(M,\operatorname{Ext}^{\mathcal{WF}}_n(L,N))\cong \operatorname{Ext}^{\mathcal{WF}}_n(L,\operatorname{Hom}_R(M,N))
$$
for  any non-negative integer $n$.
\end{proposition}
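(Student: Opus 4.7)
The plan is to derive the first isomorphism in each part as a direct specialization of Proposition~\ref{threeiso}, and then to obtain the second isomorphism by exhibiting all three groups as the $n$-th homology of one and the same complex built from a left weak injective (respectively weak flat) resolution of $N$.

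The first isomorphism of (1) is Proposition~\ref{threeiso}(1) with $S=R$: because $R$ is commutative, the $R$-module $L$ is automatically an $R$-$R$-bimodule, and the cited isomorphism reduces to exactly what is claimed. The first isomorphism of (2) is Proposition~\ref{threeiso}(2) with $S=R$ in the same way. For the second isomorphism of (1), I would fix a deleted left $\mathcal{WI}$-resolution $\widehat{\mathscr{W}}^{\mathcal{I}}_N$ of $N$. Hom-tensor adjunction (valid since $R$ is commutative) produces a natural isomorphism of complexes
$$
\operatorname{Hom}_R(L\otimes_R M,\widehat{\mathscr{W}}^{\mathcal{I}}_N)\;\cong\;\operatorname{Hom}_R\bigl(L,\operatorname{Hom}_R(M,\widehat{\mathscr{W}}^{\mathcal{I}}_N)\bigr),
$$
whose $n$-th homology on the left is $\operatorname{Ext}^{\mathcal{WI}}_n(L\otimes_R M,N)$. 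To identify the $n$-th homology on the right with $\operatorname{Ext}^{\mathcal{WI}}_n(L,\operatorname{Hom}_R(M,N))$, it would suffice to show that $\operatorname{Hom}_R(M,\widehat{\mathscr{W}}^{\mathcal{I}}_N)$ is itself a left $\mathcal{WI}$-resolution of $\operatorname{Hom}_R(M,N)$.

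That verification is the main content of the proof. Two facts must be checked. First, each $\operatorname{Hom}_R(M,W_i)$ should be weak injective: writing $M$ as a direct summand of a free $R^{(I)}$ makes $\operatorname{Hom}_R(M,W_i)$ a summand of $W_i^{I}$, so it is enough that $\mathcal{WI}$ be closed under direct products, which holds because $\operatorname{Ext}^1_R(X,-)$ commutes with products for every super finitely presented $X$. Second, $\operatorname{Hom}_R(W,\operatorname{Hom}_R(M,\widehat{\mathscr{W}}^{\mathcal{I}}_N))$ must be exact for every weak injective $W$; by the Hom-Hom swap over a commutative ring, this complex is naturally isomorphic to $\operatorname{Hom}_R(M,\operatorname{Hom}_R(W,\widehat{\mathscr{W}}^{\mathcal{I}}_N))$, and the outer $\operatorname{Hom}_R(M,-)$ preserves the exactness of $\operatorname{Hom}_R(W,\widehat{\mathscr{W}}^{\mathcal{I}}_N)$ (which follows from the defining property of the left $\mathcal{WI}$-resolution) because $M$ is projective.

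Part (2) follows by an entirely parallel argument, using a deleted left $\mathcal{WF}$-resolution of $N$ in place of a $\mathcal{WI}$-resolution. The main obstacle I expect is the weak-flat analogue of the product-closure step, since $\operatorname{Tor}_1^R(-,X)$ does not commute with arbitrary direct products in general. The rescue is that a super finitely presented $X$ admits a resolution by finitely generated free modules $R^{n_k}$; since $-\otimes_R R^{n}$ is a finite direct sum, it commutes with direct products, and a direct product of exact sequences is exact. This yields $\operatorname{Tor}_1^R\bigl(\prod_\alpha W_\alpha,X\bigr)\cong\prod_\alpha\operatorname{Tor}_1^R(W_\alpha,X)=0$ whenever each $W_\alpha$ is weak flat, so $\mathcal{WF}$ is closed under products and the remainder of the argument transfers verbatim.
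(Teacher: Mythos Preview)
Your proposal is correct and follows essentially the same route as the paper: reduce the first isomorphism to Proposition~\ref{threeiso}, then show that $\operatorname{Hom}_R(M,\widehat{\mathscr{W}}^{\mathcal{I}}_N)$ (resp.\ $\operatorname{Hom}_R(M,\widehat{\mathscr{W}}^{\mathcal{F}}_N)$) is again a left $\mathcal{WI}$- (resp.\ $\mathcal{WF}$-) resolution, and finish via Hom--tensor adjunction. The only difference is local: to see that each $\operatorname{Hom}_R(M,W_i)$ is weak injective (resp.\ weak flat), the paper invokes the isomorphism $\operatorname{Ext}^n_R(F,\operatorname{Hom}_R(M,W_i))\cong\operatorname{Hom}_R(M,\operatorname{Ext}^n_R(F,W_i))$ for projective $M$, while you instead realize $\operatorname{Hom}_R(M,W_i)$ as a summand of a product $W_i^{I}$ and appeal to product-closure of $\mathcal{WI}$ (resp.\ $\mathcal{WF}$); both verifications are valid and yield the same conclusion.
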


\begin{proof}
(1) The first isomorphism follows immediately from Proposition
$\ref{threeiso}$(1). On the other hand, let
$\widehat{\mathscr{W}}^{\mathcal{I}}:\cdots \rightarrow W_1\rightarrow
W_0\rightarrow N\rightarrow 0$ be a  left $\mathcal{WI}$-resolution
of $N$. For any super finitely presented $R$-module $F$, since $M$
is  projective, we may obtain that $\mbox{Hom}_R(M,W_i)$ is weak
injective from the following isomorphisms:
$$
\begin{array}{cc}\mbox{Ext}^n_R(F,\mbox{Hom}_R(M,W_i))&\!\!\!\!\!\!\!\!\!\!\!\!\!\!\!\!\!\cong \mbox{Ext}^n_R(M\otimes_RF,W_i)\\ &\!\!\!\!\!\!\!\!\!\!\!\!\!\!\!\!\!\cong \mbox{Ext}^n_R(F\otimes_RM,W_i)\\&\!\!\!\cong \mbox{Hom}_R(M,\mbox{Ext}^n_R(F,W_i)),\end{array}
$$
where the first isomorphism comes from \cite[Cor. 10.65]{Ro} and the third comes from \cite[p. 668]{Ro}.
Meanwhile, the complex
$\mbox{Hom}_R(W,\mbox{Hom}_R(M,\widehat{\mathscr{W}}^{\mathcal{I}}))\cong
\mbox{Hom}_R(M,\mbox{Hom}_R(W,\widehat{\mathscr{W}}^{\mathcal{I}}))$ is
exact for any weak injective $R$-module $W$. So the complex
$\mbox{Hom}_R(M,\widehat{\mathscr{W}}^{\mathcal{I}})$ is a left
$\mathcal{WI}$-resolution of $\mbox{Hom}_R(M,N)$. Thus we have the
following isomorphisms:
$$
\begin{array}{cc}
\operatorname{Ext}^{\mathcal{WI}}_n(L,\operatorname{Hom}_R(M,N))&\!\!\!=\mbox{H}_n(\mbox{Hom}_R(L,\mbox{Hom}_R(M,\widehat{\mathscr{W}}^{\mathcal{I}}_N)))\\
&\!\!\!\!\!\!\!\!\!\!\!\!\!\!\cong \mbox{H}_n(\mbox{Hom}_R(L\otimes_RM,\widehat{\mathscr{W}}^{\mathcal{I}}_N))\\
&\!\!\!\!\!\!\!\!\!\!\!\!\!\!\!\!\!\!\!\!\!\!\!\!\!\!\!\!\!=\operatorname{Ext}^{\mathcal{WI}}_n(L\otimes_RM,N).
\end{array}
$$

(2) The first isomorphism follows immediately from Proposition
$\ref{threeiso}$(2). On the other hand, let
$\widehat{\mathscr{W}}^{\mathcal{F}}:\cdots \rightarrow W_1\rightarrow
W_0\rightarrow N\rightarrow 0$ be a  left $\mathcal{WF}$-resolution
of $N$. Since $M$ is  projective, it is easy to verify that each
$\mbox{Hom}_R(M,W_i)$ is weak flat, and  the complex
$$\mbox{Hom}_R(W,\mbox{Hom}_R(M,\widehat{\mathscr{W}}^{\mathcal{F}}))\cong
\mbox{Hom}_R(M,\mbox{Hom}_R(W,\widehat{\mathscr{W}}^{\mathcal{F}}))$$ is
exact for any weak flat $R$-module $W$. So the complex
$\mbox{Hom}_R(M,\widehat{\mathscr{W}}^{\mathcal{F}})$ is a left
$\mathcal{WF}$-resolution of $\mbox{Hom}_R(M,N)$. Thus we have the
following isomorphisms:
$$
\begin{array}{cc}
\operatorname{Ext}^{\mathcal{WF}}_n(L,\operatorname{Hom}_R(M,N))&\!\!\!=\mbox{H}_n(\mbox{Hom}_R(L,\mbox{Hom}_R(M,\widehat{\mathscr{W}}^{\mathcal{F}}_N)))\\
&\!\!\!\!\!\!\!\!\!\!\!\!\!\!\cong \mbox{H}_n(\mbox{Hom}_R(M\otimes_RL,\widehat{\mathscr{W}}^{\mathcal{F}}_N))\\
&\!\!\!\!\!\!\!\!\!\!\!\!\!\!\!\!\!\!\!\!\!\!\!\!\!\!\!\!\!=\operatorname{Ext}^{\mathcal{WI}}_n(M\otimes_RL,N).
\end{array}
$$
\end{proof}

\section{The homological properties of   modules  with finite weak flat   dimensions}

This section is devoted to investigating further the  weak flat modules and dimension in terms of the relative derived functors $  \mbox{Ext}^{\mathcal{WF}}_n(-,-) \mbox{ and } \mbox{Tor}^n_{\mathcal{W}}(-,-).$

Note that  for any two right $R$-modules $M$ and $N$, there exists a canonical map $$\theta: \mbox{Ext}^{\mathcal{WF}}_0(M,N)\rightarrow \mbox{Hom}_R(M,N)$$
and for any right $R$-module $M$ and any  $R$-module $L$, there exists a canonical map $$\vartheta: M\otimes_RL\rightarrow \mbox{Tor}_{\mathcal{F}}^0(M,L).$$

 We recall from \cite{EJ} that a short exact sequence $0\rightarrow L\rightarrow M\rightarrow N\rightarrow 0$ of $R$-modules is \emph{pure exact} if for every right $R$-module $Q$, the induced sequence $0\rightarrow Q\otimes_RL\rightarrow Q\otimes_RM\rightarrow Q\otimes_RN\rightarrow 0$ is exact, and in this case, the morphism $L\rightarrow M$ (resp. $M\rightarrow N$) is called a  \emph{pure  monomorphism} (resp. a \emph{pure epimorphism}).  An $R$-module $P$ is said to be \emph{pure projective} (resp. \emph{pure injective}) if the functor $\operatorname{Hom}_R(P,-)$ (resp. $\operatorname{Hom}_R(-,P)$) is exact with respect to all short pure exact sequences.

 By \cite[Prop. 4.4]{GH},  an $R$-module $M$ is weak injective if and only if the map $ \operatorname{Ext}^{\mathcal{WI}}_0(M,N)\rightarrow \operatorname{Hom}_R(M,N)$ is an epimorphism for every  $R$-module $N$. Also, we have

\begin{lemma}\label{wirc}
The following are equivalent for an $R$-module $M$:

$(1)$ $M$ is weak injective;

$(2)$ The map $\sigma: \operatorname{Ext}^{\mathcal{WI}}_0(L,M)\rightarrow \operatorname{Hom}_R(L,M)$ is an epimorphism for every   $R$-module $L$;

$(3)$ The map $\sigma: \operatorname{Ext}^{\mathcal{WI}}_0(L,M)\rightarrow \operatorname{Hom}_R(L,M)$ is an epimorphism for every pure projective  $R$-module $L$.
\end{lemma}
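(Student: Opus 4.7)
The plan is to establish the circle of implications $(1) \Rightarrow (2) \Rightarrow (3) \Rightarrow (1)$; the first two implications are formal, while $(3) \Rightarrow (1)$ carries all of the content. For $(1) \Rightarrow (2)$, when $M$ itself is weak injective the trivial complex $\cdots \to 0 \to M \xrightarrow{\id_M} M \to 0$ is a left $\WI$-resolution of $M$ (the required exactness of $\Hom_R(W,-)$ for $W \in \WI$ holds trivially), which makes $\Ext^{\WI}_0(L,M) = \Hom_R(L,M)$ and identifies $\sigma$ with the identity map. The implication $(2) \Rightarrow (3)$ is merely a restriction of the class of test modules.

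For $(3) \Rightarrow (1)$, I would fix a super finitely presented $R$-module $F$ together with a partial resolution $F_1 \to F_0 \to F \to 0$ by finitely generated projectives extracted from its super finitely presented resolution, and set $K = \ker(F_0 \to F)$. Shifting the resolution one step shows that $K$ again admits a resolution by finitely generated projectives, hence is super finitely presented and, in particular, pure projective. To conclude $\Ext^1_R(F,M) = 0$ it suffices to extend each $\phi \colon K \to M$ to a map $F_0 \to M$. For this I would unwind $\sigma$ using a left $\WI$-resolution $\cdots \to W_1 \to W_0 \to M \to 0$ of $M$: the epimorphism condition on $\sigma$ then translates into the statement that every morphism $L \to M$ lifts through the $\WI$-precover $W_0 \to M$. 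Applied to $L = K$, this yields $\widetilde{\phi} \colon K \to W_0$ with $(W_0 \to M) \circ \widetilde{\phi} = \phi$. Since $W_0$ is weak injective and $F$ is super finitely presented, $\Ext^1_R(F,W_0) = 0$, so $\widetilde{\phi}$ extends to some $\psi \colon F_0 \to W_0$; the composition $F_0 \xrightarrow{\psi} W_0 \to M$ then extends $\phi$, as required.

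The main obstacle I anticipate is choosing the useful interpretation of $\sigma$. Because $\Hom_R(-,-)$ is left balanced on $\WI \times \WI$, the group $\Ext^{\WI}_0(L,M)$ admits two descriptions: computed from a right $\WI$-resolution of $L$, the epimorphism of $\sigma$ says every $L \to M$ extends along the $\WI$-preenvelope $L \to W_L^0$, while computed from a left $\WI$-resolution of $M$, it says every $L \to M$ lifts through the $\WI$-precover $W_0 \to M$. Only the second reading meshes with the vanishing $\Ext^1_R(F,W_0) = 0$, so identifying this duality and selecting the correct resolution is the genuine step in the argument; once this framing is in place, the syzygy chase above is routine.
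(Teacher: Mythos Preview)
Your argument is correct, and for $(3)\Rightarrow(1)$ it proceeds along a genuinely different line from the paper. The paper exploits the hypothesis for \emph{all} pure projective modules $L$ simultaneously: surjectivity of $\Hom_R(L,W_0)\to\Hom_R(L,M)$ for every pure projective $L$ forces the $\WI$-precover $d_0\colon W_0\to M$ to be a pure epimorphism, and then one invokes the closure of $\WI$ under pure quotients (\cite[Prop.~2.9]{GW}) to conclude that $M$ is weak injective. Your approach instead tests only against first syzygies $K$ of super finitely presented modules (which are finitely presented, hence pure projective), lifts $K\to M$ through $W_0$, and uses the weak injectivity of $W_0$ to extend over $F_0$; this verifies $\Ext^1_R(F,M)=0$ directly from the definition. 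Your route is more elementary in that it avoids purity theory and the closure result from \cite{GW}, and in fact shows that the weaker hypothesis ``$\sigma$ is epic for every super finitely presented $L$'' already suffices. The paper's route, on the other hand, isolates a structural reason (pure epimorphism from a weak injective) that recurs in the analogous arguments for $\WF$ in Proposition~\ref{propweakflat}.
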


\begin{proof}
(1) $\Rightarrow$ (2) $\Rightarrow$  (3)  are trivial.

(3) $\Rightarrow$ (1). Let $\xymatrix@C=0.5cm{
   \cdots\ar[r] &  W_1\ar[r]^{d_1}& W_0\ar[r]^{d_0}& M\ar[r]& 0}$ be a left $\mathcal{WI}$-resolution of $M$ and $L$ a pure projective  $R$-module. Then we have the following deleted  complex  $$\xymatrix@C=0.5cm{
     \cdots \ar[r] & \mbox{Hom}_R(L,W_1) \ar[r]^{{d_1}_\ast} & \mbox{Hom}_R(L,W_0) \ar[r] & 0 }.$$
 By definition, $\operatorname{Ext}^{\mathcal{WI}}_0(L,M)=\mbox{Hom}_R(L,W_0)/\mbox{Im}{d_1}_\ast$. So we have the following commutative diagram:
 $$
 \xymatrix{\mbox{Hom}_R(L,W_0)\ar[rr]^{{d_0}_\ast}\ar[rd]&&\mbox{Hom}_R(L,M).\\
 &\operatorname{Ext}^{\mathcal{WI}}_0(L,M)\ar[ru]_{\sigma}\ar[rd]&\\&&0}
 $$
Thus ${d_0}_\ast$ is an epimorphism by hypothesis, and hence $d_0$ is a pure epimorphism. That is, the sequence $0\rightarrow \mbox{Ker}d_0\rightarrow W_0\stackrel{d_0}\rightarrow M\rightarrow 0$ is pure exact. So $\mbox{Ker}d_0$ is weak injective by \cite[Prop. 2.9]{GW}. Moreover, it is easy to verify  that $M$ is also weak injective.
\end{proof}

\begin{proposition}\label{propweakflat}
The following are equivalent for a right $R$-module $M$:

$(1)$ $M$ is weak flat;

$(2)$ The map $\theta: \operatorname{Ext}^{\mathcal{WF}}_0(L,M)\rightarrow \operatorname{Hom}_R(L,M)$ is an epimorphism for every  right $R$-module $L$;

$(3)$ The map $\theta: \operatorname{Ext}^{\mathcal{WF}}_0(L,M)\rightarrow \operatorname{Hom}_R(L,M)$ is an epimorphism for every pure projective right $R$-module $L$;

$(4)$ The map $\theta: \operatorname{Ext}^{\mathcal{WF}}_0(M,N)\rightarrow \operatorname{Hom}_R(M,N)$ is an epimorphism for every right $R$-module $N$;

$(5)$ The map $\theta: \operatorname{Ext}^{\mathcal{WF}}_0(M,N)\rightarrow \operatorname{Hom}_R(M,N)$ is an epimorphism for every pure injective right $R$-module $N$;

$(6)$  The map $\theta: \operatorname{Ext}^{\mathcal{WF}}_0(M,M)\rightarrow \operatorname{Hom}_R(M,M)$ is an epimorphism;

$(7)$ The map $\vartheta: M\otimes_RL\rightarrow \operatorname{Tor}_{\mathcal{W}}^0(M,L)$ is a monomorphism for every   $R$-module $L$;

$(8)$ The map $\vartheta: M\otimes_RL\rightarrow \operatorname{Tor}_{\mathcal{W}}^0(M,L)$ is a monomorphism for every pure projective  $R$-module $L$.
\end{proposition}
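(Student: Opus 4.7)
The plan is to close the equivalences via the cycles $(1)\Rightarrow(2)\Rightarrow(3)\Rightarrow(1)$, $(1)\Rightarrow(4)\Rightarrow(5)\Rightarrow(1)$, $(4)\Rightarrow(6)\Rightarrow(1)$ and $(1)\Rightarrow(7)\Rightarrow(8)\Rightarrow(1)$. The forward half is routine: if $M$ is weak flat then the trivial complex $0\to M\xrightarrow{\id}M\to 0$ serves as both a left and a right $\mathcal{WF}$-resolution of $M$, which makes $\theta$ and $\vartheta$ identity maps and so yields $(1)\Rightarrow(2),(4),(7)$. The implications $(2)\Rightarrow(3)$, $(4)\Rightarrow(5)$, $(4)\Rightarrow(6)$ (take $N=M$), and $(7)\Rightarrow(8)$ are then immediate specializations.

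For $(3)\Rightarrow(1)$ I would follow the proof of Lemma~\ref{wirc} almost verbatim. Fix a left $\mathcal{WF}$-resolution $\cdots\to W_1\xrightarrow{d_1}W_0\xrightarrow{d_0}M\to 0$. The factorization $\mathrm{Hom}_R(L,W_0)\twoheadrightarrow \mathrm{Ext}^{\mathcal{WF}}_0(L,M)\xrightarrow{\theta}\mathrm{Hom}_R(L,M)$ together with (3) forces ${d_0}_\ast$ to be surjective for every pure projective $L$, in particular for every finitely presented $L$, so $d_0$ is a pure epimorphism. Dualizing the resulting pure exact sequence $0\to\ker d_0\to W_0\to M\to 0$ gives that $M^+$ is a direct summand of $W_0^+$; by Corollary~\ref{cor} the latter is weak injective, and weak injectivity is preserved under direct summands, so $M^+$ is weak injective and $M$ is weak flat by Corollary~\ref{cor} again. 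The same ``summand of a weak flat / weak injective'' reasoning handles $(6)\Rightarrow(1)$ and $(8)\Rightarrow(1)$: from a right $\mathcal{WF}$-resolution $0\to M\xrightarrow{\varphi}W^0\to W^1\to\cdots$, hypothesis (6) lifts $\id_M$ to some $f:W^0\to M$ with $f\varphi=\id_M$, splitting $\varphi$ and presenting $M$ as a direct summand of the weak flat $W^0$; and for (8), identifying $\mathrm{Tor}^0_{\mathcal{W}}(M,L)=\ker(W^0\otimes_RL\to W^1\otimes_RL)$ and $\vartheta$ with $M\otimes_RL\to W^0\otimes_RL$, injectivity on every pure projective (hence every finitely presented) $L$ makes $\varphi$ a pure monomorphism, so $M^+$ is a direct summand of the weak injective $W^{0\,+}$ and $M$ is again weak flat.

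For $(5)\Rightarrow(1)$ I would pass through character-module duality. For every left $R$-module $L$ the module $L^+$ is pure injective, so (5) gives that $\theta:\mathrm{Ext}^{\mathcal{WF}}_0(M,L^+)\to\mathrm{Hom}_R(M,L^+)$ is an epimorphism. The natural isomorphisms of Corollary~\ref{twoisocor},
$$\mathrm{Ext}^{\mathcal{WF}}_0(M,L^+)\cong\mathrm{Tor}^0_{\mathcal{W}}(M,L)^+\cong\mathrm{Ext}^{\mathcal{WI}}_0(L,M^+),$$
together with the adjunction $\mathrm{Hom}_R(M,L^+)\cong\mathrm{Hom}_R(L,M^+)$, intertwine $\theta$ with the canonical map $\sigma:\mathrm{Ext}^{\mathcal{WI}}_0(L,M^+)\to\mathrm{Hom}_R(L,M^+)$ from Lemma~\ref{wirc}; hence $\sigma$ is epi for every $L$, Lemma~\ref{wirc} delivers that $M^+$ is weak injective, and Corollary~\ref{cor} then gives $M$ weak flat. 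The main obstacle I anticipate is precisely this last naturality check---confirming that the isomorphism of Corollary~\ref{twoisocor} genuinely transports $\theta$ to $\sigma$---which reduces to the naturality of the Hom--$\otimes$ adjunction once the two sides are computed from compatibly chosen resolutions, but it requires careful bookkeeping.
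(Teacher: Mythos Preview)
Your proof is correct and follows the same overall architecture as the paper's. The one noteworthy divergence is that you and the paper have essentially \emph{swapped} strategies for $(5)\Rightarrow(1)$ and $(8)\Rightarrow(1)$: the paper proves $(5)\Rightarrow(1)$ directly by observing that surjectivity of $\Hom_R(W^0,N)\to\Hom_R(M,N)$ for every pure injective $N$ forces $d^0:M\to W^0$ to be a pure monomorphism (so $M$ is a pure submodule of the weak flat $W^0$), while it proves $(8)\Rightarrow(1)$ via the duality of Corollary~\ref{twoisocor} and Lemma~\ref{wirc}---exactly the route you take for $(5)\Rightarrow(1)$. Conversely, your direct purity argument for $(8)\Rightarrow(1)$ (reading $\vartheta$ as $M\otimes_RL\to W^0\otimes_RL$ and testing on finitely presented $L$) is the mirror image of the paper's $(5)\Rightarrow(1)$. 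Both pairings work; the paper's choice for $(5)\Rightarrow(1)$ has the advantage of sidestepping the naturality bookkeeping you flag as the main obstacle, so if you want to simplify, just argue $(5)\Rightarrow(1)$ the way you already argue $(8)\Rightarrow(1)$.
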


\begin{proof} (1) $\Rightarrow$ (2) $\Rightarrow$ (3), (1) $\Rightarrow$ (4) $\Rightarrow$ (5), (2) $\Rightarrow$  (6) and  (7) $\Rightarrow$  (8) are trivial.

(1) $\Leftrightarrow$ (7) follows from \cite[Prop. 5.4]{GH}.

(3) $\Rightarrow$ (1). Let $\xymatrix@C=0.5cm{
  \cdots \ar[r] & W_1 \ar[r]^{d_1} & W_0 \ar[r]^{d_0} & M \ar[r] & 0 }$ be a left $\mathcal{WF}$-resolution of $M$ and $L$ a pure projective right $R$-module. Then, by hypothesis, it is easy to verify that $\mbox{Hom}_R(L,W_0)\stackrel{{d_0}_\ast}\rightarrow \mbox{Hom}_R(L,M)\rightarrow 0$ is exact, and hence $d_0$ is a pure epimorphism. That is, the sequence $0\rightarrow \mbox{Ker}d_0\rightarrow W_0\rightarrow M\rightarrow 0$ is pure exact. So $\mbox{Ker}d_0$ is weak flat by \cite[Prop. 2.9]{GW} and hence  $M$ is weak flat.

  (5) $\Rightarrow$ (1). Let $\xymatrix@C=0.5cm{
   0\ar[r] & M \ar[r]^{d^0} & W^0 \ar[r]^{d^1} & W^1 \ar[r] & \cdots }$ be a right $\mathcal{WF}$-resolution of $M$ and $N$ a pure injective right $R$-module. Then, by hypothesis, it is easy to verify that $\mbox{Hom}_R(W^0,N)\stackrel{{d^0}^\ast}\rightarrow \mbox{Hom}_R(M,N)\rightarrow 0$ is exact, and hence $d^0$ is a pure monomorphism. Thus $M$ is weak flat by \cite[Prop. 2.9]{GW}.

(6) $\Rightarrow$ (1). Let $f:W_0\rightarrow M$ be a weak flat precover of $M$. By hypothesis, it is easy to verify that there exists $g:M\rightarrow W_0$ such that $fg=1_M$, that is, $f$ is a retraction. Thus $M$ is weak flat as a direct summand of $W_0$.

 (8) $\Rightarrow$ (1).  For any pure projective  $R$-module $L$, since  $0\rightarrow M\otimes_RL\stackrel{\vartheta}\rightarrow \operatorname{Tor}_{\mathcal{W}}^0(M,L)$ is exact, the induced sequence $\operatorname{Tor}_{\mathcal{W}}^0(M,L)^+\stackrel{\vartheta^+}\rightarrow (M\otimes_RL)^+\rightarrow 0$ is also exact. Consider the following commutative diagram:
    $$
    \xymatrix{\operatorname{Tor}_{\mathcal{W}}^0(M,L)^+\ar[r]^{\vartheta^+}\ar[d]^f& (M\otimes_RL)^+\ar[d]^g\ar[r]&0\\
    \mbox{Ext}^{\mathcal{WI}}_0(L,M^+) \ar[r]&\mbox{Hom}_R(L,M^+).&
    }
    $$
Note that $g$ is an isomorphism by adjoint isomorphism, and $f$ is also an isomorphism by Corollary $\ref{twoisocor}$. So the sequence $\mbox{Ext}^{\mathcal{WI}}_0(L,M^+) \rightarrow\mbox{Hom}_R(L,M^+)\rightarrow 0$ is exact. It follows from Lemma \ref{wirc} that $M^+$ is weak injective, and hence $M$ is weak flat by \cite[Rem. 2.2(2)]{GW}.
\end{proof}

\begin{proposition}\label{wfd1}
The following are equivalent for a right $R$-module $M$:

$(1)$ $\wfd_R(M)\leq 1$;

$(2)$ The map $\theta: \operatorname{Ext}^{\mathcal{WF}}_0(L,M)\rightarrow \operatorname{Hom}_R(L,M)$ is a monomorphism for every  right $R$-module $L$;

$(3)$ The map $\theta: \operatorname{Ext}^{\mathcal{WF}}_0(L,M)\rightarrow \operatorname{Hom}_R(L,M)$ is a monomorphism for every pure projective right $R$-module $L$.
\end{proposition}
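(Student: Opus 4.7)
The plan is to follow the blueprint already used in Lemma \ref{wirc} and Proposition \ref{propweakflat}: compute $\operatorname{Ext}^{\mathcal{WF}}_0(L,M)$ from a left $\mathcal{WF}$-resolution of $M$, translate (in)jectivity of the canonical map $\theta$ into a statement about how the first syzygy of $M$ sits inside a weak flat cover, and then exploit the closure of $\mathcal{WF}$ under pure quotients given by \cite[Prop.~2.9]{GW}.

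For (1)$\Rightarrow$(2), I start from a weak flat cover $d_0\colon W_0\to M$ (which is automatically surjective since projectives are weak flat) and let $K_0=\ker d_0$. Since $\wfd_R(M)\leq 1$, dimension shifting in the Tor long exact sequence coming from $0\to K_0\to W_0\to M\to 0$ shows that $K_0$ is weak flat, and a direct verification using the precover property of $d_0$ shows that the padded complex $\cdots\to 0\to K_0\to W_0\to M\to 0$ is a genuine left $\mathcal{WF}$-resolution of $M$. Computing $\operatorname{Ext}^{\mathcal{WF}}_0(L,M)$ from this resolution yields $\operatorname{Hom}_R(L,W_0)/\operatorname{Im}(\operatorname{Hom}_R(L,K_0)\to \operatorname{Hom}_R(L,W_0))$, and the left-exact Hom sequence identifies this with the image of $\operatorname{Hom}_R(L,W_0)\to \operatorname{Hom}_R(L,M)$, so $\theta$ is the inclusion into $\operatorname{Hom}_R(L,M)$ and is in particular monic. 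The implication (2)$\Rightarrow$(3) is immediate.

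For (3)$\Rightarrow$(1), which is the core of the proposition, I take an arbitrary (exact) left $\mathcal{WF}$-resolution $\cdots\to W_1\xrightarrow{d_1}W_0\xrightarrow{d_0}M\to 0$ and set $K_0=\ker d_0$. Applying $\operatorname{Hom}_R(L,-)$ to $0\to K_0\to W_0\to M\to 0$ identifies $\ker(d_{0*})$ with $\operatorname{Hom}_R(L,K_0)$, while $d_{1*}$ factors through the induced map $\operatorname{Hom}_R(L,W_1)\to\operatorname{Hom}_R(L,K_0)$; consequently $\theta_L$ is monic exactly when this factoring map is surjective. Running this for every pure projective $L$ (in particular for every finitely presented $L$) shows that $W_1\to K_0$ is a pure epimorphism. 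Because pure quotients of weak flat modules are weak flat by \cite[Prop.~2.9]{GW}, $K_0$ is weak flat, and then the short exact sequence $0\to K_0\to W_0\to M\to 0$ with both $K_0$ and $W_0$ weak flat forces $\wfd_R(M)\leq 1$ through the Tor long exact sequence. The main obstacle is this last direction: verifying precisely that hypothesis (3) translates into a pure epimorphism at the syzygy level, and then invoking the correct pure-quotient closure result for $\mathcal{WF}$; the other implications are routine once the short left $\mathcal{WF}$-resolution of $M$ is in hand.
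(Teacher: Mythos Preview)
Your argument is correct. The implication $(1)\Rightarrow(2)$ and the trivial $(2)\Rightarrow(3)$ match the paper's proof (you supply a bit more detail on why a two-term left $\mathcal{WF}$-resolution exists, but the idea is identical).

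For $(3)\Rightarrow(1)$ you take a genuinely different route from the paper. The paper chooses a weak flat cover $f\colon W_0\to M$, uses that $0\to K\to W_0\to M\to 0$ is $\Hom_R(\mathcal{WF},-)$-exact to obtain the long exact sequence of $\operatorname{Ext}^{\mathcal{WF}}_*$ from \cite[Thm.~8.2.3]{EJ}, sets up a commutative ladder with the canonical maps $\sigma_K,\sigma_{W_0},\sigma_M$, and applies the Snake Lemma together with Proposition~\ref{propweakflat} to conclude that $\sigma_K$ is epic, hence $K$ is weak flat. You instead compute $\ker\theta_L$ directly as $\Hom_R(L,K_0)/\operatorname{Im}\bigl(\Hom_R(L,W_1)\to\Hom_R(L,K_0)\bigr)$, so that hypothesis $(3)$ says exactly that the corestriction $W_1\twoheadrightarrow K_0$ stays surjective after $\Hom_R(L,-)$ for every finitely presented $L$; this is the standard characterization of a pure epimorphism, and \cite[Prop.~2.9]{GW} then gives $K_0\in\mathcal{WF}$. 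Your approach is more elementary---it avoids the relative long exact sequence and the Snake Lemma---and it is precisely the $n=1$ instance of the technique the paper itself uses later in Proposition~\ref{charwfd} for $n\geq 2$, so it has the virtue of unifying the two cases. The paper's approach, on the other hand, illustrates how Proposition~\ref{propweakflat} and the derived-functor machinery feed back into dimension computations.
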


\begin{proof}
(1) $\Rightarrow$ (2). By hypothesis, $M$ has a left $\mathcal{WF}$-resolution $0\rightarrow W_1\rightarrow W_0\rightarrow M\rightarrow 0$. Applying the functor $\mbox{Hom}_R(L,-)$ to it, we have the following exact sequence
$$
0\rightarrow \mbox{Hom}_R(L,W_1)\rightarrow \mbox{Hom}_R(L,W_0)\rightarrow \mbox{Hom}_R(L,M).
$$
By definition, $\operatorname{Ext}^{\mathcal{WF}}_0(L,M)=\mbox{Hom}_R(L,W_0)\diagup\mbox{Hom}_R(L,W_1)$. Consequently, $\operatorname{Ext}^{\mathcal{WF}}_0(L,M)\rightarrow \operatorname{Hom}_R(L,M)$ is a monomorphism.

(2) $\Rightarrow$ (3) is trivial.

(3) $\Rightarrow$ (1). Let  $f:W_0\rightarrow M$ be a weak flat cover of $M$ and $K=\mbox{Ker}f$. Then we have an exact sequence $0\rightarrow K\rightarrow W_0\rightarrow M\rightarrow 0$ with $W_0$ weak flat. So it suffices to show that $K$ is also weak flat. In fact, the sequence $0\rightarrow K\rightarrow W_0\rightarrow M\rightarrow 0$ is $\mbox{Hom}_R(\mathcal{WF},-)$-exact, so there is the following long exact sequence by \cite[Thm. 8.2.3]{EJ}
$$
\cdots \rightarrow \mbox{Ext}^{\mathcal{WF}}_1(L,M)\rightarrow \mbox{Ext}^{\mathcal{WF}}_0(L,K)\rightarrow \mbox{Ext}^{\mathcal{WF}}_0(L,W_0)\rightarrow \mbox{Ext}^{\mathcal{WF}}_0(L,M)\rightarrow 0
$$
for every pure projective right $R$-module $L$.

Consider the following commutative diagram:
$$
\xymatrix{
&\mbox{Ext}^{\mathcal{WF}}_0(L,K)\ar[r]\ar[d]^{\sigma_K}& \mbox{Ext}^{\mathcal{WF}}_0(L,W_0)\ar[r]\ar[d]^{\sigma_{W_0}}& \mbox{Ext}^{\mathcal{WF}}_0(L,M)\ar[r]\ar[d]^{\sigma_M}& 0\\
0\ar[r]&\mbox{Hom}_R(L,K)\ar[r]&\mbox{Hom}_R(L,W_0)\ar[r]&\mbox{Hom}_R(L,M).&
}
$$
Note that $\sigma_{W_0}$ is an epimorphism by Proposition $\ref{propweakflat}$ and $\sigma_M$ is a monomorphism by assumption. So $\sigma_K$ is an epimorphism by Snake Lemma. Thus $K$ is weak flat by Proposition $\ref{propweakflat}$, and hence $\wfd_R(M)\leq 1$.
\end{proof}

\begin{proposition}\label{charwfd}
Let $n\geq 2$. Then the following are equivalent for a right $R$-module $M$:

$(1)$ $\wfd_R(M)\leq n$;

$(2)$ $\operatorname{Ext}^{\mathcal{WF}}_{n+k}(L,M)=0$ for every right $R$-module $L$ and every $k\geq -1$;

$(3)$ $\operatorname{Ext}^{\mathcal{WF}}_{n-1}(L,M)=0$ for every right $R$-module $L$;

$(4)$ $\operatorname{Ext}^{\mathcal{WF}}_{n+k}(L,M)=0$ for every pure projective right $R$-module $L$ and every $k\geq -1$;

$(5)$ $\operatorname{Ext}^{\mathcal{WF}}_{n-1}(L,M)=0$ for every pure projective right $R$-module $L$;

$(6)$ $\operatorname{Ext}^{\mathcal{WF}}_{n+k}(L,M)=0$ for every finitely presented right $R$-module $L$ and every $k\geq -1$;

$(7)$ $\operatorname{Ext}^{\mathcal{WF}}_{n-1}(L,M)=0$ for every finitely presented  right $R$-module $L$.
\end{proposition}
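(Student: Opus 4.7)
The implications within (2)--(7) are essentially forced by ``size'' considerations: enlarging the class of $L$ (from finitely presented to pure projective to arbitrary) weakens the hypothesis, and vanishing in a single degree $n-1$ is weaker than vanishing for all $k\geq -1$. This gives the trivial chains (2)$\Rightarrow$(3), (2)$\Rightarrow$(4)$\Rightarrow$(6), (3)$\Rightarrow$(5)$\Rightarrow$(7), (4)$\Rightarrow$(5), and (6)$\Rightarrow$(7); thus only (1)$\Rightarrow$(2) and (7)$\Rightarrow$(1) carry real content.

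For (1)$\Rightarrow$(2), I plan to work with a proper left $\WF$-resolution $\cdots\to W_1\to W_0\to M\to 0$ and its syzygies $K_k=\ker(W_{k-1}\to W_{k-2})$ (with $K_0=M$); since $\wfd_R(M)\leq n$, one may truncate so that $K_n$ is weak flat. Each short exact sequence $0\to K_{k+1}\to W_k\to K_k\to 0$ is $\Hom_R(\WF,-)$-exact and yields the long exact sequence of \cite[Thm. 8.2.3]{EJ}; combined with $\Ext^{\WF}_j(L,W_k)=0$ for $j\geq 1$, this gives the dimension-shifting isomorphism $\Ext^{\WF}_j(L,K_k)\cong\Ext^{\WF}_{j-1}(L,K_{k+1})$ whenever $j\geq 2$. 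Iterating produces $\Ext^{\WF}_{n+k}(L,M)\cong\Ext^{\WF}_{1+k}(L,K_{n-1})$ for $k\geq 0$ and $\Ext^{\WF}_{n-1}(L,M)\cong\Ext^{\WF}_1(L,K_{n-2})$. The first group vanishes because the truncated two-term proper resolution $0\to K_n\to W_{n-1}\to K_{n-1}\to 0$ of $K_{n-1}$ (with $K_n$ weak flat) computes $\Ext^{\WF}_j(L,K_{n-1})=0$ for $j\geq 1$ directly. The second is the crux: apply the long exact sequence of $0\to K_{n-1}\to W_{n-2}\to K_{n-2}\to 0$ and chase the commutative square comparing $\Ext^{\WF}_0(L,K_{n-1})\to\Ext^{\WF}_0(L,W_{n-2})$ with the natural injection $\Hom_R(L,K_{n-1})\hookrightarrow\Hom_R(L,W_{n-2})$ via the canonical maps $\sigma$. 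Since $\sigma_{W_{n-2}}$ is an isomorphism (Proposition \ref{propweakflat}) and $\sigma_{K_{n-1}}$ is monic (Proposition \ref{wfd1}, using $\wfd_R(K_{n-1})\leq 1$), the upper horizontal map is injective, forcing $\Ext^{\WF}_1(L,K_{n-2})=0$.

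For (7)$\Rightarrow$(1), I first upgrade (7) to (5). Every pure projective right $R$-module $L$ is a direct summand of a direct sum $\bigoplus_i L_i$ of finitely presented modules, and computing $\Ext^{\WF}_{n-1}(-,M)$ as the $(n-1)$-th homology of $\Hom_R(-,W_\bullet)$ (for a left $\WF$-resolution $W_\bullet\to M$) converts $\bigoplus_i$ in the first slot to $\prod_i$; since homology commutes with products, $\Ext^{\WF}_{n-1}(\bigoplus_i L_i,M)\cong\prod_i\Ext^{\WF}_{n-1}(L_i,M)=0$ and vanishing passes to summands. Granted (5), I run the dimension-shifting machinery of the previous paragraph in reverse: (5) forces $\Ext^{\WF}_1(L,K_{n-2})=0$ for pure projective $L$, the same long exact sequence delivers the injection $\Ext^{\WF}_0(L,K_{n-1})\hookrightarrow\Ext^{\WF}_0(L,W_{n-2})$, and the same square chase (now exploiting that $\sigma_{W_{n-2}}$ is iso) forces the canonical map $\sigma_{K_{n-1}}:\Ext^{\WF}_0(L,K_{n-1})\to\Hom_R(L,K_{n-1})$ to be monic on every pure projective $L$. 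Proposition \ref{wfd1} then yields $\wfd_R(K_{n-1})\leq 1$, and the $\Tor$ long exact sequence applied to $0\to K_n\to W_{n-1}\to K_{n-1}\to 0$ shows $K_n$ is weak flat, producing a length-$n$ weak flat resolution of $M$, i.e.\ $\wfd_R(M)\leq n$.

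The main obstacle is the diagram chase at the boundary of dimension shifting, where the long exact sequence ceases to supply isomorphisms and one must instead compare canonical maps across a commutative square; this chase is the single pivot for both (1)$\Rightarrow$(2) and (5)$\Rightarrow$(1), and the two directions require one to track the arrow directions carefully. A secondary point is uniformity in $n$: the case $n=2$ collapses all intermediate shifts (with $K_{n-2}=M$) and must be handled as a boundary case of the same argument, while the case $n\geq 3$ uses the full chain of iterated shifts.
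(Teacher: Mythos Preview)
Your argument is correct, but it takes a noticeably different route from the paper's. For (1)$\Rightarrow$(2) the paper simply observes that when $\wfd_R(M)\leq n$ the $n$th syzygy $K_n$ in any left $\WF$-resolution is weak flat, so the truncated complex $0\to K_n\to W_{n-1}\to\cdots\to W_0\to M\to 0$ is itself a left $\WF$-resolution; then $\Ext^{\WF}_{n+k}(L,M)=0$ for $k\geq 1$ is immediate from the length, and the cases $k=-1,0$ follow at once from left-exactness of $\Hom_R(L,-)$ applied to $0\to K_n\to W_{n-1}\to W_{n-2}$. No dimension-shifting, no commutative square, no appeal to Propositions~\ref{propweakflat} or~\ref{wfd1} is needed. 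For (7)$\Rightarrow$(1) the paper exploits purity directly: the vanishing of $\Ext^{\WF}_{n-1}(L,M)$ for every finitely presented $L$ says exactly that $\Hom_R(L,W_n)\to\Hom_R(L,K_n)$ is surjective for all such $L$, so the epimorphism $W_n\twoheadrightarrow K_n$ is pure; since $\WF$ is closed under pure quotients, $K_n$ is weak flat and one is done. Your detour through (7)$\Rightarrow$(5) via products, followed by dimension-shifting and the square chase landing on Proposition~\ref{wfd1}, works but is considerably longer. What your approach buys is a transparent reduction to the low-dimensional characterizations already established (Propositions~\ref{propweakflat} and~\ref{wfd1}), making the inductive structure explicit; what the paper's approach buys is brevity and a direct use of the finitely-presented test for purity, which is precisely why the weakest hypothesis (7) suffices without any upgrade.
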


\begin{proof}
(1) $\Rightarrow$ (2). By hypothesis, we may take a left $\mathcal{WF}$-resolution of $M$ as follows:
$$
\xymatrix@C=0.5cm{
  0 \ar[r] & W_n \ar[r]^{} & W_{n-1}\ar[r]&\cdots \ar[r]^{} & W_1 \ar[r]^{} & W_0 \ar[r]^{} & M \ar[r] & 0. }
$$
Clearly, $\operatorname{Ext}^{\mathcal{WF}}_{n+k}(L,M)=0$ for all right $R$-module $L$ and all $k\geq 1$. Moreover, from the exactness of the following sequence
$$
\xymatrix@C=0.5cm{
  0 \ar[r] & \mbox{Hom}_R(L,W_n) \ar[r]^{} & \mbox{Hom}_R(L,W_{n-1}) \ar[r] & \mbox{Hom}_R(L,W_{n-2}) }
$$
we may obtain that $\operatorname{Ext}^{\mathcal{WF}}_{n-1}(L,M)=0=\operatorname{Ext}^{\mathcal{WF}}_{n}(L,M)$, as desired.

(2) $\Rightarrow$ (3)  $\Rightarrow$ (5)  $\Rightarrow$ (7) and (2) $\Rightarrow$ (4)  $\Rightarrow$ (6)  $\Rightarrow$ (7) are trivial.

(7) $\Rightarrow$ (1). Let $$
\xymatrix{
  \cdots \ar[r] & W_n \ar[r]^{d_n} & W_{n-1}\ar[r]^{d_{n-1}}&\cdots \ar[r]^{} & W_1 \ar[r]^{d_1} & W_0 \ar[r]^{d_0} & M \ar[r] & 0 }
$$
be a left $\mathcal{WF}$-resolution of $M$ and $K_n=\mbox{Im}d_n$. Then we have the following commutative diagram:
$$
\xymatrix{
  \cdots \ar[r] & W_n \ar[rr]^{d_n}\ar[rd]^{\pi} && W_{n-1}\ar[r]^{d_{n-1}}&\cdots  \ar[r]^{d_1} & W_0 \ar[r]^{d_0} & M \ar[r] & 0. \\
 &&K_n\ar[ru]^{i}\ar[rd] &&&&&&\\
 &0\ar[ru]&&0&&&&&
  }
$$
In order to prove that $\wfd_R(M)\leq n$,  we only suffices to show that $K_n$ is weak flat. Note that the class of weak flat $R$-modules is closed under pure quotients. So we may get the desired result just  by proving that  $\pi$ is a pure epimorphism.

By assumption, $\operatorname{Ext}^{\mathcal{WF}}_{n-1}(L,M)=0$ for every finitely presented  right $R$-module $L$, which means that the sequence
$$
\mbox{Hom}_R(L,W_n)\stackrel{{d_n}_\ast}\rightarrow \mbox{Hom}_R(L,W_{n-1})\stackrel{{d_{n-1}}_\ast}\rightarrow \mbox{Hom}_R(L,W_{n-2})
$$
is exact. But the sequence
$$0\rightarrow \mbox{Hom}_R(L,K_n)\stackrel{i_\ast}\rightarrow \mbox{Hom}_R(L,W_{n-1})\stackrel{{d_{n-1}}_\ast}\rightarrow \mbox{Hom}_R(L,W_{n-2})$$ is exact. In the following, we will prove that the sequence $\mbox{Hom}_R(L,W_n)\stackrel{\pi_\ast}\rightarrow \mbox{Hom}_R(L,K_n)\rightarrow 0$ is exact for any finitely presented  right $R$-module $L$. Take any $\alpha\in \mbox{Hom}_R(L,K_n)$. Since ${d_{n-1}}_\ast i_\ast(\alpha)=0$, we have $i\alpha=i_\ast(\alpha)\in \mbox{Ker}{d_{n-1}}_\ast=\mbox{Im}{d_n}_\ast$. Thus there exists $\beta\in\mbox{Hom}_R(L,W_n)$ such that $i\alpha={d_n}_\ast(\beta)={d_n}\beta=i\pi\beta$. Moreover, since $i$ is monic, we have $\alpha=\pi\beta=\pi_\ast(\beta)$. So $\pi_\ast$ is epic, and hence $\pi$ is pure epic, as desired.
\end{proof}

\section{Covers and preenvelopes by modules of finite weak injective and weak flat dimensions}

\subsection{}

In this section, we will investigate two classes of modules, namely modules of weak injective dimension at most $n$ and that of weak flat dimension at most $n$ respectively, and  prove the existence of the corresponding covers and preenvelopes. We mainly use a result of Holm and J{\o}rgensen in \cite{HJ09}.

\begin{definition}{\rm(\cite[Def. 2.1]{HJ09})
A{\emph{ dual pair}} over a ring $R$ is a pair $(\mathcal{C},\mathcal{D})$, where $\mathcal{C}$ is a class of left $R$(resp. right $R$)-modules and $\mathcal{D}$ is a class of right $R$(resp. left $R$)-modules, subject to the following conditions:

(1) For any module $M$, one has  $M\in \mathcal{C}$ if and only if $M^+\in \mathcal{D}$;

(2) $\mathcal{D}$ is closed under direct summands and finite direct sums.

A duality pair $(\mathcal{C},\mathcal{D})$ is called \emph{(co)product-closed} if $\mathcal{C}$ is closed under (co)products.}
\end{definition}

The dual pair plays an important role in the aspect of showing the existence of covers and envelopes. The main theorem obtained by  Holm and J{\o}rgensen (i.e. \cite[Thm. 3.1]{HJ09}) is as follows:

Let $(\mathcal{C},\mathcal{D})$ be a dual pair. Then $\mathcal{C}$  is closed under pure submodules, pure quotients and pure extensions. Furthermore, the following hold:

(a) If $(\mathcal{C},\mathcal{D})$ is product-closed, then $\mathcal{C}$ is preenveloping;

(b) If $(\mathcal{C},\mathcal{D})$ is coproduct-closed, then $\mathcal{C}$ is covering;

(c) If $(\mathcal{C},\mathcal{D})$ is perfect, then  $(\mathcal{C},\mathcal{C}^\perp)$ is a perfect cotorsion pair.

\medskip

Let $n$ be a fixed non-negative integer. We denote by $\WI_{\leq n}$ and $\WF_{\leq n}$ the classes of modules with weak injective dimension and weak flat dimension at most $n$ respectively.

\begin{proposition}\label{dual pair 1}
The pair $(\WF_{\leq n},\WI_{\leq n})$ is a dual pair.
\end{proposition}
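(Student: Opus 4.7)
The plan is to verify directly the two conditions of Definition~\ref{def-env}'s analogue for dual pairs, specializing to $(\mathcal{C},\mathcal{D}):=(\WF_{\leq n},\WI_{\leq n})$. By the conventions fixed in Section~2, $\mathcal{C}$ consists of right $R$-modules and $\mathcal{D}$ consists of left $R$-modules, so the pair has the right/left shape required by the definition.

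For condition (1) I need the equivalence $M\in\WF_{\leq n}\Leftrightarrow M^+\in\WI_{\leq n}$, and this is essentially a one-line citation of Corollary~\ref{cor}(2): for any right $R$-module $M$ we have $\wfd_R(M)=\wid_R(M^+)$, so $\wfd_R(M)\leq n$ if and only if $\wid_R(M^+)\leq n$.

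For condition (2) I must show that $\WI_{\leq n}$ is closed under direct summands and finite direct sums. Both properties follow from the $\Ext$-characterization of $\wid$. Since $\Ext^{n+1}_R(L,-)$ is additive (and preserves direct products in the second variable), for any finite family $M_1,\dots,M_k$ in $\WI_{\leq n}$ and any super finitely presented $L$ we have $\Ext^{n+1}_R(L,\bigoplus_j M_j)\cong\bigoplus_j\Ext^{n+1}_R(L,M_j)=0$, so $\bigoplus_j M_j\in\WI_{\leq n}$. If $M$ is a direct summand of some $N\in\WI_{\leq n}$, then $\Ext^{n+1}_R(L,M)$ is a direct summand of $\Ext^{n+1}_R(L,N)=0$, hence vanishes for every super finitely presented $L$, so $M\in\WI_{\leq n}$.

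There is no real obstacle: once Corollary~\ref{cor} is available, the proposition is a formal packaging together with the standard additivity and direct-summand behavior of $\Ext$. The only decision to make is the choice of sides for $\mathcal{C}$ and $\mathcal{D}$, and this is fixed by the fact that $\WF$ is a class of right modules while $\WI$ is a class of left modules, so that character-module duality swaps them as required by axiom (1).
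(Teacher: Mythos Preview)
Your proof is correct and follows the same overall strategy as the paper: verify axiom~(1) of a dual pair via Corollary~\ref{cor}(2), and verify axiom~(2) using the $\Ext$-description of $\wid$. The only noticeable difference is in the argument for closure of $\WI_{\leq n}$ under direct summands: you use the immediate observation that $\Ext^{n+1}_R(L,-)$ is additive, so a direct summand of a module with vanishing $\Ext^{n+1}$ against all super finitely presented $L$ inherits that vanishing; the paper instead first observes (via a long exact sequence) that in a short exact sequence $0\to X\to Y\to Z\to 0$ with $X\in\WI_{\leq n}$ one has $Y\in\WI_{\leq n}\Leftrightarrow Z\in\WI_{\leq n}$, and then invokes the proof of \cite[Prop.~1.4]{Ho} to deduce closure under summands. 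Your route is more direct and avoids the external citation, while the paper's detour records the extra ``two-out-of-three'' fact, which it does not otherwise need here. Either argument is adequate.
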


\begin{proof}
First of all, by Corollary \ref{cor}(2), we have that $M\in\WF_{\leq n}$ if and only if $M^+\in\WI_{\leq n}$. Moreover, from \cite[Prop. 3.3]{GW} and the commutativity of the functor Ext and the direct product in the second variable, we have that the class $\WI_{\leq n}$ is closed under direct products. In particular, $\WI_{\leq n}$ is closed under finite direct sums. Now given an exact sequence $0\rightarrow X\rightarrow Y\rightarrow Z\rightarrow 0$ with $X\in \WI_{\leq n}$. Then it is easy to verify that $Y\in \WI_{\leq n}$ if and only if $Z\in \WI_{\leq n}$ from the induced long exact sequence. Following this fact and the proof of \cite[Prop. 1.4]{Ho}, we have that $\WI_{\leq n}$ is closed under direct summands. Therefore, $(\WF_{\leq n},\WI_{\leq n})$ is a dual pair by definition.
\end{proof}

As a direct result of \cite[Thm. 3.1]{HJ09}, we have

\begin{corollary}
The class $\WF_{\leq n}$ is closed under pure submodules, pure quotients and pure extensions.
\end{corollary}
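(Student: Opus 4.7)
The plan is to observe that this corollary is a direct instantiation of the Holm--J{\o}rgensen theorem (\cite[Thm. 3.1]{HJ09}) quoted just above, applied to the particular dual pair that Proposition \ref{dual pair 1} has just produced. So the argument reduces to citing the two results in sequence; there is no new content to extract.

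Concretely, I would proceed as follows. First, invoke Proposition \ref{dual pair 1} to know that $(\WF_{\leq n},\WI_{\leq n})$ is a dual pair in the sense of Definition 4.1, i.e.\ $M\in\WF_{\leq n}\iff M^+\in\WI_{\leq n}$ (Corollary \ref{cor}(2), extended dimensionwise) and $\WI_{\leq n}$ is closed under direct summands and finite direct sums (verified in Proposition \ref{dual pair 1} using the fact that $\Ext$ commutes with direct products in the second variable and using the two-out-of-three property of $\WI_{\leq n}$ on short exact sequences). Second, apply the general statement of \cite[Thm. 3.1]{HJ09} recalled above: for any dual pair $(\mathcal{C},\mathcal{D})$, the left-hand class $\mathcal{C}$ is automatically closed under pure submodules, pure quotients, and pure extensions. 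Taking $\mathcal{C}=\WF_{\leq n}$ yields exactly the three closure properties claimed.

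There is effectively no obstacle: all the work has already been done in proving Proposition \ref{dual pair 1}, and the closure assertions are part of the statement of the Holm--J{\o}rgensen theorem rather than something we need to re-prove. The only thing to be slightly careful about is that the roles of ``left $R$-module'' and ``right $R$-module'' in Definition 4.1 are allowed to be either way around; since $\WF_{\leq n}$ consists of right modules and $\WI_{\leq n}$ of left modules, this matches the ``right $R$/left $R$'' alternative in the definition, so the cited theorem applies without any side-swap issue. Hence the proof is a one-line reference: by Proposition \ref{dual pair 1} and \cite[Thm. 3.1]{HJ09}, $\WF_{\leq n}$ is closed under pure submodules, pure quotients and pure extensions. $\qed$
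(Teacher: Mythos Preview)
Your proposal is correct and matches the paper's approach exactly: the paper states this corollary with no proof beyond the sentence ``As a direct result of \cite[Thm.~3.1]{HJ09}, we have,'' relying on Proposition~\ref{dual pair 1} having just established that $(\WF_{\leq n},\WI_{\leq n})$ is a dual pair. Your write-up simply spells out this one-line citation in more detail.
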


Since the functor Tor can commute with  coproducts in the second variable, it follows from \cite[Prop. 3.4]{GW} that $\WF_{\leq n}$ is closed under  coproducts, that is, the dual pair $(\WF_{\leq n},\WI_{\leq n})$ is coproduct-closed.  Following \cite[Thm. 3.1(b)]{HJ09}, we have

\begin{theorem}\label{n-wf-cover}
The class $\WF_{\leq n}$ is covering.
\end{theorem}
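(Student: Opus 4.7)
The plan is to invoke the Holm--J{\o}rgensen machinery (\cite[Thm. 3.1(b)]{HJ09}) directly: once we have a dual pair whose left class is closed under coproducts, the left class is automatically covering. Proposition \ref{dual pair 1} already supplies the dual pair $(\WF_{\leq n}, \WI_{\leq n})$, so essentially the only content left to record is that $\WF_{\leq n}$ is closed under arbitrary coproducts.

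First, I would unpack the definition: a right $R$-module $M$ lies in $\WF_{\leq n}$ iff $\operatorname{Tor}_{n+1}^R(M,L)=0$ for every super finitely presented left $R$-module $L$ (using the standard dimension-shifting criterion via \cite[Prop. 3.4]{GW}). Next, for a family $\{M_j\}_{j\in J}$ in $\WF_{\leq n}$, I would use the classical fact that Tor commutes with arbitrary coproducts in its first argument, yielding
$$
\operatorname{Tor}_{n+1}^R\!\Bigl(\bigoplus_{j\in J} M_j,\,L\Bigr)\cong \bigoplus_{j\in J}\operatorname{Tor}_{n+1}^R(M_j,L)=0
$$
for every super finitely presented $L$. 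Hence $\bigoplus_{j\in J}M_j\in \WF_{\leq n}$, and therefore the dual pair $(\WF_{\leq n},\WI_{\leq n})$ is coproduct-closed.

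Finally, I would apply \cite[Thm. 3.1(b)]{HJ09}, which says that if $(\mathcal{C},\mathcal{D})$ is a coproduct-closed dual pair then $\mathcal{C}$ is covering, to conclude that $\WF_{\leq n}$ is covering.

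There is no real obstacle: the nontrivial structural work has already been done in Proposition \ref{dual pair 1} (establishing the dual pair, in particular that $\WI_{\leq n}$ is closed under direct summands) and in the Holm--J{\o}rgensen theorem itself. The sole new verification is the coproduct-closure of $\WF_{\leq n}$, which is a one-line computation with Tor. The proof is therefore essentially a two-step citation: dual pair plus coproduct-closure, then invoke \cite[Thm. 3.1(b)]{HJ09}.
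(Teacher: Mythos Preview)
Your proposal is correct and follows essentially the same approach as the paper: both use Proposition~\ref{dual pair 1} to get the dual pair $(\WF_{\leq n},\WI_{\leq n})$, verify coproduct-closure of $\WF_{\leq n}$ via the commutation of $\Tor$ with coproducts together with \cite[Prop.~3.4]{GW}, and then invoke \cite[Thm.~3.1(b)]{HJ09}.
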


Now let $\{M_i\}_{i\in I}$ be a family of  $R$-modules. Choose, for each $i\in I$, a left $\WF$-resolution of $M_i$ as follows:
$$
\cdots\rightarrow W_i^2\rightarrow W_i^1\rightarrow W_i^0\rightarrow M_i\rightarrow0.
$$
Then there is an exact sequence
$$
\cdots\rightarrow \prod_{i\in I}W_i^2\rightarrow \prod_{i\in I}W_i^1\rightarrow \prod_{i\in I}W_i^0\rightarrow \prod_{i\in I}M_i\rightarrow0,
$$
which is a left $\WF$-resolution of $\prod_{i\in I}M_i$ by \cite[Thm. 2.13]{GW} and \cite[Thm. 1.2.9]{Xu}. So we can get that $$\operatorname{Ext}_n^{\mathcal{WF}}(L,\prod_{i\in I}M_i)\cong\prod_{i\in I}\operatorname{Ext}_n^{\mathcal{WF}}(L,M_i)$$ for all $R$-modules $L$ and all $n\geq 0$. Following this isomorphism and Proposition \ref{charwfd}, we have that $\WF_{\leq n}$ is closed under direct products. So, by \cite[Thm. 3.1(a)]{HJ09}, we obtain

\begin{theorem}\label{n-wf-enve}
The class $\WF_{\leq n}$ is preenveloping.
\end{theorem}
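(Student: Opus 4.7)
My plan is to reduce the statement to \cite[Thm. 3.1(a)]{HJ09}, which asserts that whenever $(\mathcal{C},\mathcal{D})$ is a dual pair with $\mathcal{C}$ closed under arbitrary direct products, the class $\mathcal{C}$ is preenveloping. Proposition \ref{dual pair 1} already supplies the dual pair $(\WF_{\leq n},\WI_{\leq n})$, so the entire task reduces to checking that $\WF_{\leq n}$ is closed under direct products.

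To carry this out, I would take a family $\{M_i\}_{i\in I}$ in $\WF_{\leq n}$ and choose, for each $i$, a left $\WF$-resolution
$$
\cdots\rightarrow W_i^2\rightarrow W_i^1\rightarrow W_i^0\rightarrow M_i\rightarrow 0.
$$
The termwise product $\cdots\to\prod_iW_i^1\to\prod_iW_i^0\to\prod_iM_i\to 0$ should again be a left $\WF$-resolution: exactness at each spot is automatic because products of exact sequences in $\mathcal{M}_R$ are exact, the terms $\prod_iW_i^j$ remain weak flat by \cite[Thm. 2.13]{GW}, and the defining $\operatorname{Hom}_R(W,-)$-exactness for every weak flat $W$ survives passage to the product because $\operatorname{Hom}_R(W,-)$ commutes with products in the second variable (compare \cite[Thm. 1.2.9]{Xu}). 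Computing $\operatorname{Ext}^{\WF}_n$ from this product resolution then yields a natural isomorphism
$$
\operatorname{Ext}^{\WF}_n\!\bigl(L,\textstyle\prod_{i\in I}M_i\bigr)\cong\prod_{i\in I}\operatorname{Ext}^{\WF}_n(L,M_i)
$$
for every right $R$-module $L$ and every $n\geq 0$.

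Feeding this isomorphism into Proposition \ref{charwfd} (which for $n\geq 2$ characterizes membership in $\WF_{\leq n}$ by the vanishing of $\operatorname{Ext}^{\WF}_{n-1}(L,-)$, while for $n=0,1$ the mere length bound on the product resolution together with Proposition \ref{propweakflat} and Proposition \ref{wfd1} already does the job) delivers $\prod_iM_i\in\WF_{\leq n}$, so the dual pair $(\WF_{\leq n},\WI_{\leq n})$ is product-closed, and \cite[Thm. 3.1(a)]{HJ09} finishes the argument. The one step that carries any content is the first: ensuring that products of left $\WF$-resolutions remain left $\WF$-resolutions. This rests on the closure of weak flat modules under products and on the commutation of $\operatorname{Hom}_R$ with products in the second argument, after which the conclusion is a formal consequence of the Holm--J\o{}rgensen machinery already invoked in Theorem \ref{n-wf-cover}.
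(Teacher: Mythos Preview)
Your proposal is correct and follows essentially the same route as the paper: establish closure of $\WF_{\leq n}$ under direct products by taking termwise products of left $\WF$-resolutions (using \cite[Thm.~2.13]{GW} and \cite[Thm.~1.2.9]{Xu}), deduce the product formula for $\operatorname{Ext}^{\WF}_n$, apply Proposition~\ref{charwfd}, and conclude via \cite[Thm.~3.1(a)]{HJ09} and the dual pair of Proposition~\ref{dual pair 1}. Your version is in fact slightly more careful than the paper's, since you explicitly note that Proposition~\ref{charwfd} is stated only for $n\geq 2$ and indicate how to handle $n=0,1$ separately.
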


Now we turn to see the pair $(\WI_{\leq n},\WF_{\leq n})$.

\begin{proposition}\label{dual pair 2}
The pair $(\WI_{\leq n},\WF_{\leq n})$ is a dual pair.
\end{proposition}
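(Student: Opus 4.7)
The plan is to verify the two axioms of Definition 4.1 directly, mirroring the structure of the proof of Proposition \ref{dual pair 1} but with the roles of $\WI$ and $\WF$ swapped. Condition (1) is essentially a restatement of Corollary \ref{cor}(1), which asserts $\wid_R(M)=\wfd_R(M^+)$; this gives $M\in\WI_{\leq n}$ if and only if $M^+\in\WF_{\leq n}$ immediately.

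For condition (2), I would verify closure of $\WF_{\leq n}$ under finite direct sums and direct summands using the long exact sequence and commutativity properties of Tor. Concretely, since $\Tor_k^R(-,N)$ commutes with arbitrary direct sums in the first variable, for any family $\{M_i\}_{i\in I}\subseteq\WF_{\leq n}$ and any super finitely presented $R$-module $N$, one has
$$
\Tor_{n+1}^R\!\left(\bigoplus_{i\in I}M_i,\,N\right)\;\cong\;\bigoplus_{i\in I}\Tor_{n+1}^R(M_i,N)\;=\;0,
$$
so $\bigoplus_{i\in I}M_i\in\WF_{\leq n}$; in particular, $\WF_{\leq n}$ is closed under finite direct sums.

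For closure under direct summands, the short route is the same splitting: if $M=A\oplus B\in\WF_{\leq n}$, then for every super finitely presented $N$,
$$
0=\Tor_{n+1}^R(M,N)\cong \Tor_{n+1}^R(A,N)\oplus\Tor_{n+1}^R(B,N),
$$
forcing both summands to vanish, so $A,B\in\WF_{\leq n}$. Alternatively, to parallel the proof of Proposition \ref{dual pair 1} more closely, one can first observe that in any short exact sequence $0\to X\to Y\to Z\to 0$ with $X\in\WF_{\leq n}$, the long exact sequence for Tor yields $Y\in\WF_{\leq n}$ if and only if $Z\in\WF_{\leq n}$, and then invoke the argument of \cite[Prop. 1.4]{Ho} exactly as in the proof of Proposition \ref{dual pair 1} to conclude closure under direct summands.

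Since all of the required closure properties of Tor and the key identity $\wid_R(M)=\wfd_R(M^+)$ are already available, there is no real obstacle; the proof is a routine verification and will run parallel to (indeed, shorter than) that of Proposition \ref{dual pair 1}, because the closure properties of $\WF_{\leq n}$ under (co)products follow directly from the behaviour of Tor, whereas the analogous properties for $\WI_{\leq n}$ required appealing to \cite[Prop. 3.3]{GW}.
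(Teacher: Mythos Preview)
Your proposal is correct and follows essentially the same approach as the paper: verify condition (1) via Corollary \ref{cor}(1), and verify condition (2) using the commutativity of Tor with direct sums together with a direct-summand argument (the paper refers back to the argument of Proposition \ref{dual pair 1} and \cite[Prop.~1.4]{Ho}, which you also mention as your alternative route). Your primary route for direct summands---splitting $\Tor_{n+1}^R(A\oplus B,N)$---is a shade more direct than the paper's, but the overall structure is the same.
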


\begin{proof}
First of all, by Corollary \ref{cor}(1), we have that $M\in\WI_{\leq n}$ if and only if $M^+\in\WF_{\leq n}$. Moreover, from \cite[Prop. 3.4]{GW} and the commutativity of the functor Tor and the direct sum, we have that the class $\WF_{\leq n}$ is closed under direct sums. Moreover, as a similar argument to the proof of Proposition \ref{dual pair 1}, we have that $\WF_{\leq n}$ is closed under direct summands. Therefore, $(\WI_{\leq n},\WF_{\leq n})$ is a dual pair by definition.
\end{proof}

As a direct result of \cite[Thm. 3.1]{HJ09}, we have

\begin{corollary}\label{cor11}
The class $\WI_{\leq n}$ is closed under pure submodules, pure quotients, and pure extensions.
\end{corollary}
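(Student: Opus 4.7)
The plan is to invoke the Holm--J{\o}rgensen theorem \cite[Thm.~3.1]{HJ09}, which has already been recalled in the preamble of this section: for any dual pair $(\mathcal{C},\mathcal{D})$, the class $\mathcal{C}$ is closed under pure submodules, pure quotients, and pure extensions. All the real work has been done in Proposition \ref{dual pair 2}, where $(\WI_{\leq n},\WF_{\leq n})$ is verified to form a dual pair. So the corollary follows just by reading off the first assertion of \cite[Thm.~3.1]{HJ09} with $\mathcal{C}=\WI_{\leq n}$ and $\mathcal{D}=\WF_{\leq n}$.

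Concretely, I would write one sentence: since $(\WI_{\leq n},\WF_{\leq n})$ is a dual pair by Proposition \ref{dual pair 2}, the conclusion is immediate from \cite[Thm.~3.1]{HJ09}. There is no genuine obstacle here; the only thing to double-check is that the direction of the dual pair matches the way the Holm--J{\o}rgensen theorem is stated (i.e., that the closure properties are asserted for the first entry $\mathcal{C}$ of the pair, which is precisely $\WI_{\leq n}$ in our case). Since this was the motivation for setting up Proposition \ref{dual pair 2} with $\WI_{\leq n}$ in the first slot, everything lines up and the corollary is obtained at once.
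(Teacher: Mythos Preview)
Your proposal is correct and matches the paper's approach exactly: the paper states this corollary as a direct consequence of \cite[Thm.~3.1]{HJ09}, applied to the dual pair $(\WI_{\leq n},\WF_{\leq n})$ established in Proposition~\ref{dual pair 2}. There is nothing to add.
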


Since the functor Ext can commute with direct products in second variable, it follows from \cite[Prop. 3.3]{GW} that $\WI_{\leq n}$ is closed under direct products. Following \cite[Thm. 3.1(a)]{HJ09}, we have

\begin{theorem}\label{n-wi-enve}
The class $\WI_{\leq n}$ is preenveloping.
\end{theorem}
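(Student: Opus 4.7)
The plan is to apply part (a) of the Holm--J{\o}rgensen theorem (\cite[Thm.~3.1]{HJ09}) quoted at the start of Section~4 to the dual pair $(\WI_{\leq n},\WF_{\leq n})$ produced by Proposition~\ref{dual pair 2}. That theorem reduces the existence of preenvelopes to a single closure condition: the pair must be product-closed, i.e., the class $\WI_{\leq n}$ must be closed under arbitrary direct products. So the content of the theorem amounts to verifying this one closure property; once it is in hand, the conclusion is immediate.

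First I would recall from \cite[Prop.~3.3]{GW} that the class $\WI$ of weak injective modules is closed under direct products. Then, for a family $\{M_i\}_{i\in I}$ of modules in $\WI_{\leq n}$, I would choose for each $i$ a right $\WI$-resolution
\[
0\rightarrow M_i\rightarrow W_i^0\rightarrow W_i^1\rightarrow \cdots \rightarrow W_i^n\rightarrow 0
\]
(which exists by the definition of $\wid_R(M_i)\leq n$ and the right $\WI$-resolutions discussed in Section~2). Taking the product over $i\in I$ produces the exact sequence
\[
0\rightarrow \prod_{i\in I}M_i\rightarrow \prod_{i\in I}W_i^0\rightarrow \cdots \rightarrow \prod_{i\in I}W_i^n\rightarrow 0,
\]
whose middle terms lie in $\WI$ by the closure result just recalled. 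This shows $\wid_R(\prod_i M_i)\leq n$, so $\prod_i M_i\in\WI_{\leq n}$. Equivalently, one can argue directly: since $\operatorname{Ext}^{n+1}_R(L,-)$ commutes with direct products in the second variable, for every super finitely presented $L$ one has $\operatorname{Ext}^{n+1}_R(L,\prod_i M_i)\cong\prod_i\operatorname{Ext}^{n+1}_R(L,M_i)=0$.

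With product-closedness established, I would invoke \cite[Thm.~3.1(a)]{HJ09} applied to the dual pair $(\WI_{\leq n},\WF_{\leq n})$ of Proposition~\ref{dual pair 2} to conclude that $\WI_{\leq n}$ is preenveloping. No part of this is particularly subtle; the only conceptual ingredient is the duality-pair machinery, and the main (mild) obstacle is simply to be sure that the hypotheses of \cite[Thm.~3.1(a)]{HJ09} are cleanly met, i.e.\ that both conditions in the definition of dual pair have already been verified in Proposition~\ref{dual pair 2} and that the closure under products is in place.
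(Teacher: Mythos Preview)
Your proposal is correct and follows essentially the same route as the paper: establish that $\WI_{\leq n}$ is closed under direct products and then apply \cite[Thm.~3.1(a)]{HJ09} to the dual pair of Proposition~\ref{dual pair 2}. The paper uses only your second (direct) argument---that $\operatorname{Ext}$ commutes with products in the second variable together with \cite[Prop.~3.3]{GW}---rather than the resolution-based argument, but this is a minor difference in exposition, not in strategy.
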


By \cite[Prop. 2.3(1)]{GW} and \cite[Prop. 1.2.4]{Xu},  we also get that $$\operatorname{Tor}^n_{\mathcal{W}}(L,\coprod_{i\in I}M_i)\cong\coprod_{i\in I}\operatorname{Tor}^n_{\mathcal{W}}(L,M_i)$$ for any $R$-module $L$ and any $n\geq 0$. Following this isomorphism and \cite[Thm. 5.7]{GH}, we have that $\WI_{\leq n}$ is closed under coproducts. So, by \cite[Thm. 3.1(b)]{HJ09}, we obtain

\begin{theorem}\label{n-wi-cover}
The class $\WI_{\leq n}$ is covering.
\end{theorem}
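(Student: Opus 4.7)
The plan is to invoke the Holm--J{\o}rgensen machinery exactly as was done for $\WF_{\leq n}$ in Theorem \ref{n-wf-enve}, but in the coproduct direction. By Proposition \ref{dual pair 2}, the pair $(\WI_{\leq n},\WF_{\leq n})$ is already a duality pair, so to conclude that $\WI_{\leq n}$ is covering via \cite[Thm. 3.1(b)]{HJ09} it suffices to check that this duality pair is coproduct-closed, that is, that $\WI_{\leq n}$ is closed under arbitrary coproducts.

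The first step is to build a right $\WI$-resolution of a coproduct from right $\WI$-resolutions of the factors. For a family $\{M_i\}_{i\in I}$, choose for each $i$ a right $\WI$-resolution $0\to M_i \to W_i^0 \to W_i^1 \to \cdots$; taking the coproduct produces an exact sequence $0 \to \coprod_{i\in I} M_i \to \coprod_{i\in I} W_i^0 \to \coprod_{i\in I} W_i^1 \to \cdots$ because coproducts of modules are exact. Each term $\coprod_{i\in I}W_i^k$ is weak injective by \cite[Prop. 2.3(1)]{GW} (closure of $\WI$ under coproducts), and the $\mathrm{Hom}_R(-,W)$-exactness needed for this to be a bona fide right $\WI$-resolution can be extracted from the relevant coproduct formulas, essentially as in \cite[Prop. 1.2.4]{Xu}. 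From such a resolution the standard argument yields the isomorphism
\[
\operatorname{Tor}^n_{\mathcal{W}}\bigl(L,\coprod_{i\in I}M_i\bigr)\;\cong\;\coprod_{i\in I}\operatorname{Tor}^n_{\mathcal{W}}(L,M_i)
\]
for every right $R$-module $L$ and every $n\geq 0$, since $L\otimes_R-$ commutes with coproducts and cohomology commutes with coproducts of complexes.

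Next I would use the characterization \cite[Thm. 5.7]{GH} of weak injective dimension in terms of the relative functor $\operatorname{Tor}^n_{\mathcal{W}}$: $\wid_R(N)\leq n$ iff $\operatorname{Tor}^{n+1}_{\mathcal{W}}(L,N)=0$ for all right $R$-modules $L$ (or, equivalently, the vanishing criterion provided there). Applied to $N=\coprod_{i\in I}M_i$ with each $M_i\in\WI_{\leq n}$, the displayed Tor-coproduct isomorphism immediately forces $\operatorname{Tor}^{n+1}_{\mathcal{W}}(L,\coprod_{i\in I}M_i)=0$, so $\coprod_{i\in I}M_i\in\WI_{\leq n}$. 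With coproduct-closure in hand, the duality pair $(\WI_{\leq n},\WF_{\leq n})$ is coproduct-closed, and \cite[Thm. 3.1(b)]{HJ09} delivers that $\WI_{\leq n}$ is covering.

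The only point requiring genuine care is the interchange of the relative derived functor $\operatorname{Tor}^n_{\mathcal{W}}$ with coproducts in the second variable; this is not automatic from the definition of a relative derived functor, and relies on the fact that right $\WI$-resolutions can be assembled coproduct-wise, which in turn uses that $\WI$ itself is closed under coproducts. Once this is granted, the rest of the argument is formal and parallels the proof of Theorem \ref{n-wf-cover}.
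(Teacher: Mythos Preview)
Your proposal is correct and follows essentially the same route as the paper: both establish coproduct-closure of $\WI_{\leq n}$ via the isomorphism $\operatorname{Tor}^n_{\mathcal{W}}(L,\coprod_i M_i)\cong\coprod_i\operatorname{Tor}^n_{\mathcal{W}}(L,M_i)$ (justified through \cite[Prop.~2.3(1)]{GW} and \cite[Prop.~1.2.4]{Xu}), combine this with the characterization \cite[Thm.~5.7]{GH}, and then invoke \cite[Thm.~3.1(b)]{HJ09} on the duality pair of Proposition~\ref{dual pair 2}. Your write-up simply spells out in more detail the construction of the coproduct right $\WI$-resolution and the reason the $\operatorname{Tor}^n_{\mathcal{W}}$--coproduct interchange holds, which the paper leaves implicit.
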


As a special case of Theorems \ref{n-wf-cover}, \ref{n-wf-enve}, \ref{n-wi-enve} and \ref{n-wi-cover}, we have

\begin{corollary}(\cite[Thms. 3.1 and 3.4]{GH} and \cite[Thm. 2.15]{GW})
The classes $\WI$ and $\WF$ are covering and preenveloping respectively.
\end{corollary}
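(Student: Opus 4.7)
The plan is to derive this corollary as the immediate specialization to $n=0$ of the four theorems just proved, namely Theorems~\ref{n-wf-cover}, \ref{n-wf-enve}, \ref{n-wi-enve}, and \ref{n-wi-cover}. The key observation is that, directly from the definitions of the weak injective and weak flat dimensions, one has $\wid_R(M)\leq 0$ if and only if $M\in\WI$, and $\wfd_R(M)\leq 0$ if and only if $M\in\WF$. Hence, as classes of modules, $\WI_{\leq 0}=\WI$ and $\WF_{\leq 0}=\WF$.

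With this identification, each assertion of the corollary reduces to one of the general theorems at $n=0$: Theorem~\ref{n-wf-cover} gives that $\WF$ is covering, Theorem~\ref{n-wf-enve} gives that $\WF$ is preenveloping, Theorem~\ref{n-wi-enve} gives that $\WI$ is preenveloping, and Theorem~\ref{n-wi-cover} gives that $\WI$ is covering. Because the real work --- verifying the dual-pair axioms for $(\WF_{\leq n},\WI_{\leq n})$ and $(\WI_{\leq n},\WF_{\leq n})$ in Propositions~\ref{dual pair 1} and~\ref{dual pair 2}, together with the product- and coproduct-closure needed to invoke the Holm--J{\o}rgensen criterion --- has already been carried out above, there is no genuine obstacle at this stage; the corollary is simply a statement of record for the case $n=0$.
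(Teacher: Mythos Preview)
Your argument is correct and is precisely the paper's approach: the corollary is introduced with the line ``As a special case of Theorems~\ref{n-wf-cover}, \ref{n-wf-enve}, \ref{n-wi-enve} and \ref{n-wi-cover}, we have'', so the paper simply records the $n=0$ specialization, which is exactly what you have written out in detail.
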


\begin{proposition}
The following are equivalent:

$(1)$ ${_RR}$ has weak injective dimension at most $n$;

$(2)$ Every right $R$-module has a monic $\WF_{\leq n}$-preenvelope;

$(3)$ Every  $R$-module has an epic $\WI_{\leq n}$-cover;

$(4)$ Every injective right $R$-module has weak flat dimension at most $n$;

$(5)$ Every projective $R$-module  has weak injective dimension at most $n$;

$(6)$ Every flat $R$-module  has weak injective dimension at most $n$.
\end{proposition}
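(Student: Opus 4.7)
The plan is to establish the six-way equivalence via three clusters: $(1)\Leftrightarrow(5)\Leftrightarrow(6)$ by closure properties of $\WI_{\leq n}$; $(1)\Leftrightarrow(4)$ by an Ext/Tor duality together with Corollary \ref{cor}; and $(2)\Leftrightarrow(4)$, $(3)\Leftrightarrow(5)$ by standard splitting arguments against injective and projective modules. Throughout I lean heavily on the closure properties of $\WI_{\leq n}$ and $\WF_{\leq n}$ extracted from Propositions \ref{dual pair 1} and \ref{dual pair 2}.

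For $(1)\Leftrightarrow(5)\Leftrightarrow(6)$, the implications $(5)\Rightarrow(1)$ and $(6)\Rightarrow(5)$ are immediate, since ${_RR}$ is projective and projective modules are flat. For $(1)\Rightarrow(5)$, every projective left $R$-module is a direct summand of some free module $R^{(I)}$; closure of $\WI_{\leq n}$ under coproducts and under direct summands (recorded in the proof of Proposition \ref{dual pair 1}) propagates the hypothesis $R\in\WI_{\leq n}$ to all projectives. For $(5)\Rightarrow(6)$, I invoke the standard fact that every flat module is a pure quotient of a free module and combine it with Corollary \ref{cor11} (closure of $\WI_{\leq n}$ under pure quotients).

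The equivalence $(1)\Leftrightarrow(4)$ is the one piece of genuine computation. For $(4)\Rightarrow(1)$, $R^+=\Hom_\mathbb{Z}(R,\mathbb{Q}/\mathbb{Z})$ is an injective right $R$-module (Lambek's theorem applied to the flat module $R$), so (4) gives $\wfd_R(R^+)\leq n$, and then Corollary \ref{cor}(1) yields $\wid_R({_RR})=\wfd_R(R^+)\leq n$. For $(1)\Rightarrow(4)$, take an injective right $R$-module $E$ and a super finitely presented left $R$-module $L$ with a resolution $P_\bullet\to L$ by finitely generated projectives. The standard natural isomorphism $E\otimes_R P_i\cong\Hom_R(\Hom_R(P_i,R),E)$ identifies $E\otimes_R P_\bullet$ with $\Hom_R(P_\bullet^{\ast},E)$, and exactness of $\Hom_R(-,E)$ on right $R$-modules lets homology pass through to give
\[
\Tor^R_{n+1}(E,L)\;\cong\;\Hom_R(\Ext^{n+1}_R(L,R),E)\;=\;0,
\]
where the vanishing uses (1). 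Hence $\wfd_R(E)\leq n$.

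The remaining equivalences rest on splitting. For $(4)\Rightarrow(2)$, given any right $R$-module $M$, Theorem \ref{n-wf-enve} supplies a $\WF_{\leq n}$-preenvelope $\varphi\colon M\to W$; embedding $M$ into an injective right $R$-module $E$, which lies in $\WF_{\leq n}$ by (4), and factoring that embedding through $\varphi$ forces $\varphi$ to be monic. For $(2)\Rightarrow(4)$, a monic $\WF_{\leq n}$-preenvelope of an injective $E$ splits, exhibiting $E$ as a direct summand of an element of $\WF_{\leq n}$, and summand closure (Proposition \ref{dual pair 2}) completes the argument. The mirror argument, using $\WI_{\leq n}$-covers (Theorem \ref{n-wi-cover}) and $R^{(I)}\in\WI_{\leq n}$ under (1), handles $(3)\Leftrightarrow(5)$: an epic $\WI_{\leq n}$-cover of a projective splits, and conversely any surjection $R^{(I)}\to M$ factors through the $\WI_{\leq n}$-cover of $M$, forcing the cover to be epic. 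The main obstacle is the bookkeeping in $(1)\Rightarrow(4)$ — committing to the dual identification of $E\otimes_R P_\bullet$ with $\Hom_R(P_\bullet^{\ast},E)$ and exploiting injectivity of $E$ to commute homology past $\Hom_R(-,E)$ — while the rest reduces to closure properties already in hand and routine extension/lifting.
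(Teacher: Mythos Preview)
Your proof is correct but organized differently from the paper's. The paper closes the cycle $(1)\Rightarrow(2)\Rightarrow(4)\Rightarrow(6)\Rightarrow(5)\Rightarrow(1)$ (with the side branch $(1)\Rightarrow(3)\Rightarrow(5)$), never linking $(1)$ and $(4)$ directly. You instead prove $(1)\Leftrightarrow(4)$ head-on: for $(1)\Rightarrow(4)$ you use the dual-basis isomorphism $E\otimes_R P\cong\Hom_R(P^\ast,E)$ on a resolution of $L$ by finitely generated projectives, combined with exactness of $\Hom_R(-,E)$, to obtain $\Tor^R_{n+1}(E,L)\cong\Hom_R(\Ext^{n+1}_R(L,R),E)$; for $(4)\Rightarrow(1)$ you observe that $R^+$ is injective and apply Corollary~\ref{cor}. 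This direct route yields an explicit Tor/Ext identity as a byproduct, whereas the paper's detour through preenvelopes is softer and needs only the closure properties and the preenvelope existence already in hand. Your $(5)\Rightarrow(6)$ via ``flat modules are pure quotients of free modules'' plus Corollary~\ref{cor11} also differs from the paper's $(4)\Rightarrow(6)$ (which dualizes $F$ to the injective $F^+$ and invokes Corollary~\ref{cor}); the paper stays inside the character-module framework, while you import a standard purity fact. The splitting arguments for $(2)\Leftrightarrow(4)$ and $(3)\Leftrightarrow(5)$ match the paper's almost verbatim.
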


\begin{proof}
(1) $\Rightarrow$ (2). Let $M$ be a right $R$-module. Then there is a $\WF_{\leq n}$-preenvelope $\varphi:M\rightarrow W$. Consider an exact sequence $0\rightarrow M\rightarrow \prod({_RR})^+$. Since ${_RR}$ has weak injective dimension at most $n$ by (1), then $({_RR})^+$ has weak flat dimension at most $n$ by Corollary \ref{cor}, and hence  $\prod({_RR})^+\in \WF_{\leq n}$. Now from the following commutative diagram
$$
\xymatrix@C=0.5cm{
  0 \ar[d] & \\
  M\ar[d]\ar[r]^{\varphi} & W\ar@{.>}[ld]\\
  \prod({_RR})^+ }
$$
we can get that the $\WF_{\leq n}$-preenvelope $\varphi:M\rightarrow W$ is monic.

(2) $\Rightarrow$ (4). Let $I$ be an injective right $R$-module. By (2), there is an exact sequence $0\rightarrow I\rightarrow W\rightarrow W/I\rightarrow 0$ with $W\in \WF_{\leq n}$. Moreover, this sequence is split, so $I$ belongs to $\WF_{\leq n}$ as a direct summand of $W$.

(4) $\Rightarrow$ (6). Let $F$ be a flat $R$-module. Then $F^+$ is injective by \cite[Prop. 3.54]{Ro}, and hence $F^+\in \WF_{\leq n}$ by hypothesis. Moreover, by Corollary \ref{cor}, we have  $F\in \WI_{\leq n}$.

(1) $\Rightarrow$ (3). Let $M$ be an $R$-module. Then there is a $\WI_{\leq n}$-cover $\psi:W\rightarrow M$. Consider an exact sequence $L\stackrel{f}\rightarrow M\rightarrow 0$ with $L$ free. Since ${_RR}$ has weak injective dimension at most $n$ by (1),  $L$ has weak injective dimension at most $n$. Hence, there exists a map $g:L\rightarrow W$ such that $\psi g=f$. Since $f$ is epic, we can get that $\psi:W\rightarrow M$ is also epic.

(3) $\Rightarrow$ (5).  Let $P$ be a projective  $R$-module. By (3), there is an exact sequence $0\rightarrow K\rightarrow W\rightarrow P\rightarrow 0$ with $W\in \WI_{\leq n}$. Moreover, this sequence is split, so $P$ belongs to  $\WI_{\leq n}$ as a direct summand of $W$.

(6) $\Rightarrow$ (5) and (5) $\Rightarrow$ (1) are trivial.
\end{proof}

\begin{proposition}\label{at most n+1}
The following are equivalent:

$(1)$ Every  $R$-module has a monic $\WI_{\leq n}$-cover;

$(2)$ Every right $R$-module has an epic $\WF_{\leq n}$-envelope;

$(3)$ Every  $R$-module has weak injective dimension at most $n+1$;

$(4)$ Every right $R$-module has weak flat dimension at most $n+1$;

$(5)$ Every quotient of any weak injective $R$-module is in $\WI_{\leq n}$;

$(6)$ Every submodule of any weak flat right $R$-module is in $\WF_{\leq n}$;

$(7)$ The kernel of any $\WI_{\leq n}$-precover of any $R$-module  is in $\WI_{\leq n}$;

$(8)$ The cokernel of any $\WF_{\leq n}$-preenvelope of any right $R$-module  is in $\WF_{\leq n}$;

$(9)$ $\operatorname{l.sp.gldim}(R)\leq n+1$, where $$\operatorname{l.sp.gldim}(R)=\operatorname{sup}\{\operatorname{pd}_R(M)\mid M \mbox{ is a super finitely presented  $R$-module}\}.$$
\end{proposition}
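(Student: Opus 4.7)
The plan is to organize the nine conditions into two parallel tracks joined by character-module duality: an ``Ext-track'' containing (1), (3), (5), (7), (9), about weak injectives and left $R$-modules, and a ``Tor-track'' containing (2), (4), (6), (8), about weak flats and right $R$-modules. First I establish the backbone equivalences (3) $\Leftrightarrow$ (4) $\Leftrightarrow$ (9), then close each track internally, after which the two tracks unify via (3) $\Leftrightarrow$ (4). The key general tool throughout is the syzygy observation that every syzygy of a super finitely presented module is again super finitely presented; consequently $\wid_R(X)\leq n$ forces $\Ext_R^k(L,X)=0$ for every $k\geq n+1$ and every sfp $L$ (and dually for $\wfd$), which permits free movement between dimensions $n$ and $n+1$ in the long exact sequences below. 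The backbone (3) $\Leftrightarrow$ (9) then reduces to the definition of $\operatorname{l.sp.gldim}(R)$, and (3) $\Leftrightarrow$ (4) comes from Corollary \ref{cor} applied to character modules.

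For the Ext-track, (3) $\Leftrightarrow$ (5) follows from the long exact Ext sequences of $0\to K\to W\to W/K\to 0$ with $W$ weak injective (for $\Rightarrow$) and of $0\to M\to E\to E/M\to 0$ with $E$ injective (for $\Leftarrow$). The crux is (5) $\Rightarrow$ (7): given a $\WI_{\leq n}$-precover $\phi\colon A\to M$ with kernel $K$, embed $K$ into a weak injective $W$ and form the pushout $P$ of $W\leftarrow K\to A$. One obtains two short exact sequences $0\to A\to P\to W/K\to 0$ and $0\to W\to P\to \phi(A)\to 0$. Condition (5) places $W/K$ in $\WI_{\leq n}$, and two successive long exact Ext calculations (using the syzygy observation to get $\Ext_R^{n+2}(L,\cdot)=0$ on $\WI_{\leq n}$) then place first $P$ and then $\phi(A)$ in $\WI_{\leq n}$. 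The precover property of $\phi$ now yields a section of $A\twoheadrightarrow\phi(A)$, and the resulting splitting exhibits $K$ as a direct summand of $A\in\WI_{\leq n}$. For (7) $\Rightarrow$ (1), take the $\WI_{\leq n}$-cover $\psi\colon A\to M$ of Theorem \ref{n-wi-cover}; by (7), $\ker\psi\in\WI_{\leq n}$, and the same syzygy trick places $A/\ker\psi$ in $\WI_{\leq n}$, so the induced monic precover $\bar\psi\colon A/\ker\psi\to M$ factors through $\psi$. The cover's automorphism property then forces the projection $A\to A/\ker\psi$ to be an isomorphism, i.e.\ $\psi$ is monic. Finally (1) $\Rightarrow$ (7) compares any precover with the monic cover via mutual factorization, identifying $\ker\phi$ as a direct summand of the precover domain.

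The Tor-track (4) $\Rightarrow$ (6) $\Rightarrow$ (8) $\Rightarrow$ (2) $\Rightarrow$ (4) follows the dual scheme with pullbacks replacing pushouts. In particular, (6) $\Rightarrow$ (4) surjects a free $F$ onto $N$ and applies (6) to the kernel, and (6) $\Rightarrow$ (8) uses the pullback of $A\to\operatorname{coker}\phi\leftarrow W$ with $W\twoheadrightarrow\operatorname{coker}\phi$ a free surjection. The one genuinely new subtlety is (8) $\Rightarrow$ (2): only preenveloping is available \emph{a priori} (Theorem \ref{n-wf-enve}), so given $M$, take a $\WF_{\leq n}$-preenvelope $\phi\colon M\to A$; the sequence $0\to\phi(M)\to A\to\operatorname{coker}\phi\to 0$ together with (8) and the syzygy observation places $\phi(M)\in\WF_{\leq n}$, whence the corestriction $\bar\phi\colon M\twoheadrightarrow\phi(M)$ is an epic preenvelope. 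Any epic preenvelope is automatically an envelope, since $g\bar\phi=\bar\phi$ with $\bar\phi$ epic forces $g=\id_{\phi(M)}$, so (2) follows.

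The technical heart of the argument is the pushout/pullback construction in (5) $\Rightarrow$ (7) and (6) $\Rightarrow$ (8): one must carefully trace how the closure hypothesis on quotients or submodules of weak injectives (respectively weak flats) propagates through the commutative square and across the syzygy boundary between $n$ and $n+1$. Once that step is in place, all the other implications reduce to routine long-exact-sequence manipulations and standard precover/preenvelope bookkeeping.
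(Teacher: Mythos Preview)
Your approach differs substantially from the paper's, and most individual implications you sketch are correct and interesting: you run (5)~$\Rightarrow$~(7) via a pushout whereas the paper runs (7)~$\Rightarrow$~(5) via a commutative diagram; you deduce (7)~$\Rightarrow$~(1) by quotienting an existing cover by its kernel, whereas the paper closes the loop with (9)~$\Rightarrow$~(1) using \cite[Prop.~4]{GRT}; and your (8)~$\Rightarrow$~(2) via ``an epic preenvelope is automatically an envelope'' is slicker than merely declaring a dual. These are genuine alternative arguments.

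However, there is a logical gap: neither track actually closes. On the Ext side you establish
\[
(3)\Leftrightarrow(5)\Rightarrow(7)\Leftrightarrow(1),
\]
and on the Tor side (reading your text literally, including the separate remark that (6)~$\Rightarrow$~(4)) you establish
\[
(4)\Leftrightarrow(6)\Rightarrow(8)\Rightarrow(2)\quad\text{and}\quad(2)\Rightarrow(8)\ \text{(dual to }(1)\Rightarrow(7)\text{)}.
\]
Together with the backbone $(3)\Leftrightarrow(4)\Leftrightarrow(9)$ this yields only
\[
\{(3),(4),(5),(6),(9)\}\ \Rightarrow\ \{(1),(7)\}\quad\text{and}\quad\{(3),(4),(5),(6),(9)\}\ \Rightarrow\ \{(2),(8)\},
\]
with no arrow coming back. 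You \emph{state} the Tor-track as a cycle ``$(4)\Rightarrow(6)\Rightarrow(8)\Rightarrow(2)\Rightarrow(4)$'', but the step $(2)\Rightarrow(4)$ is nowhere argued, and it is not dual to anything you proved on the Ext side (there is no $(1)\Rightarrow(3)$ in your Ext-track). The missing link is easy to supply---for instance $(1)\Rightarrow(5)$: if $N$ is a quotient of a weak injective $W$ and $\psi\colon B\to N$ is the monic $\WI_{\leq n}$-cover of (1), then the surjection $W\twoheadrightarrow N$ factors through $\psi$ (since $W\in\WI_{\leq n}$), so $\psi$ is also epic and hence an isomorphism, giving $N\in\WI_{\leq n}$. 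Dually $(2)\Rightarrow(6)$. The paper closes its own cycle differently, via $(7)\Rightarrow(5)$ and $(9)\Rightarrow(1)$, but either route works; you just need to add one of them.
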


\begin{proof}
(1) $\Rightarrow$ (7). Let $M$ be an $R$-module and $f:W\rightarrow M$ a $\WI_{\leq n}$-precover. By (1), there exists a monic $\WI_{\leq n}$-precover $g:W_0\rightarrow M$. Now let $K=\mbox{Ker}f$, then $K\oplus W_0\cong W$ by \cite[Lem. 8.6.3]{EJ}. Hence $K$ is in $\WI_{\leq n}$ as a direct summand of $W$.

(7) $\Rightarrow$ (5). Let $N$ be a quotient of a weak injective $R$-module $M$. By Theorem \ref{n-wi-cover}, there is a $\WI_{\leq n}$-cover $W\rightarrow N$. Let $K=\mbox{Ker}(W\rightarrow N)$. Then we have the following commutative diagram:
$$
\xymatrix@C=0.5cm{
&&M\ar[r]\ar@{.>}[d] & N\ar[r]\ar@{=}[d]& 0\\
  0 \ar[r] & K \ar[r]^{} & W \ar[r]^{} & N  &  }
$$
In particular,  we get an exact sequence $0 \ra K \ra W \ra N  \ra 0 $. Now since $K\in \WI_{\leq n}$ by (7), and $W\in \WI_{\leq n}$,  we obtain that  $N\in \WI_{\leq n}$ from the induced long exact sequence.

(5) $\Rightarrow$ (3). Let $M$ be an $R$-module and $M\ra W^0$ a weak injective preenvelope which is monic. Let $N=\mbox{Coker}(M\ra W^0)$. Then $N$ has weak injective dimension at most $n$ by (5), that is, there exists an exact sequence $0\ra N\ra W^1\ra\cdots \ra W^{n+1}\ra 0$. By assembling these two sequences, we will have that $M$ has weak injective dimension at most $n+1$.

(3) $\Leftrightarrow$ (4) $\Leftrightarrow$ (9) follow from \cite[Thm. 3.8(2)]{GW}.

(9) $\Rightarrow$ (1). Let $M$ be in $\WI_{\leq n}$ and $N$ a quotient of $M$. Then there is an exact sequence $0\ra L\ra M\ra N\ra 0$ with $L=\mbox{Ker}(M\ra N)$. Let $F$ be any super finitely presented $R$-module. By applying the functor $\Hom_R(F,-)$ to it, we have the following exact sequence
$$
\Ext^{n+1}_R(F,M)\ra \Ext^{n+1}_R(F,N)\ra \Ext^{n+2}_R(F,L).
$$
Since $M\in \WI_{\leq n}$, $\Ext^{n+1}_R(F,M)=0$. Moreover, $\wid_R(L)\leq n+1$ by (9) and \cite[Thm. 3.8(2)]{GW}, and hence $\Ext^{n+2}_R(F,L)=0$. Therefore,  $\Ext^{n+1}_R(F,N)=0$, which means that $\wid_R(N)\leq n$, that is, $\WI_{\leq n}$ is closed under quotients. Moreover, since  $\WI_{\leq n}$ is closed under coproducts, it follows from \cite[Prop. 4]{GRT} that every  $R$-module has a monic $\WI_{\leq n}$-cover.

Dually, (2) $\Rightarrow$ (8) $\Rightarrow$ (6) $\Rightarrow$ (4) and (9) $\Rightarrow$ (2) hold.
\end{proof}

\subsection{Applications: the orthogonal class of $\WI_{\leq n}$}

Let $\mathcal{C}$ be a class of $R$-modules. We fix some notations as follows:
\begin{gather*}
  {^\bot \mathcal{C}}:=\mbox{Ker}\Ext^1_R(-,\mathcal{C})=\{M\mid \Ext^1_R(M,C)=0\mbox{ for any }C\in \mathcal{C}\}; \\
  {\mathcal{C}^\bot}:=\mbox{Ker}\Ext^1_R(\mathcal{C},-)=\{M\mid \Ext^1_R(C,M)=0\mbox{ for any }C\in \mathcal{C}\};\\
{^\top \mathcal{C}}:=\mbox{Ker}\Tor^1_R(-,\mathcal{C})=\{M\mid \Tor_1^R(M,C)=0\mbox{ for any }C\in \mathcal{C}\}.
\end{gather*}

For an $R$-module $M$, by Theorem \ref{n-wi-enve} there exists a monomorphism $M\ra W$ with $W\in \WI_{\leq n}$, and hence we get an exact sequence $0\ra M\ra W\ra N\ra 0$. If $M\in {\WI_{\leq n}}^\bot$, then it is easy to verify that $W\ra N$ is an epic $\WI_{\leq n}$-precover of $N$. In fact, we have

\begin{proposition}
For an $R$-module $M$,  $M\in {\WI_{\leq n}}^\bot$ if and only if for every short exact sequence $0\ra M\ra W\ra N\ra 0$ with $W\in \WI_{\leq n}$, $W\ra N$ is a $\WI_{\leq n}$-precover of $N$.
\end{proposition}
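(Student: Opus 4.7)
The proof naturally splits into the two implications of the biconditional, and both rely on the long exact sequence of $\Ext$ applied to the given short exact sequence.

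For the forward direction, suppose $M \in {\WI_{\leq n}}^{\perp}$ and fix any exact sequence $0 \to M \to W \to N \to 0$ with $W \in \WI_{\leq n}$. I would verify the precover property directly: given any $C \in \WI_{\leq n}$ and any morphism $f \colon C \to N$, apply $\Hom_R(C,-)$ to obtain
\begin{equation*}
\Hom_R(C,W) \longrightarrow \Hom_R(C,N) \longrightarrow \Ext^1_R(C,M).
\end{equation*}
By hypothesis the rightmost term vanishes, so $f$ factors through $W \to N$. This is exactly what is needed for $W \to N$ to be a $\WI_{\leq n}$-precover.

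For the converse, suppose every short exact sequence $0 \to M \to W \to N \to 0$ with $W \in \WI_{\leq n}$ yields a $\WI_{\leq n}$-precover $W \to N$. Fix $C \in \WI_{\leq n}$; I want to show $\Ext^1_R(C,M) = 0$. The trick is to choose one convenient short exact sequence: embed $M$ into an injective $R$-module $I$ (which certainly lies in $\WI_{\leq n}$ since injective modules are weak injective of dimension $0$), and set $N = I/M$, giving
\begin{equation*}
0 \longrightarrow M \longrightarrow I \longrightarrow N \longrightarrow 0.
\end{equation*}
By assumption, $I \to N$ is a $\WI_{\leq n}$-precover, so the map $\Hom_R(C,I) \to \Hom_R(C,N)$ is surjective. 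The long exact sequence of $\Ext$ then reads
\begin{equation*}
\Hom_R(C,I) \twoheadrightarrow \Hom_R(C,N) \longrightarrow \Ext^1_R(C,M) \longrightarrow \Ext^1_R(C,I) = 0,
\end{equation*}
where the last term vanishes because $I$ is injective. Hence $\Ext^1_R(C,M) = 0$, and since $C \in \WI_{\leq n}$ was arbitrary, $M \in {\WI_{\leq n}}^{\perp}$.

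Neither step presents a real obstacle; the substantive observation is simply that $\WI_{\leq n}$ contains every injective module, which guarantees that the \emph{single} test sequence obtained from an injective envelope of $M$ is available in the converse direction (so one needs only one instance of the precover hypothesis, not its full strength). I would present the argument compactly, making that injective embedding explicit.
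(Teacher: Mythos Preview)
Your proof is correct and follows essentially the same approach as the paper: the paper dismisses the forward direction as ``Easy'' and, for the converse, embeds $M$ into its injective envelope $E(M)$ (playing the role of your $I$) and reads off $\Ext^1_R(L,M)=0$ for $L\in\WI_{\leq n}$ from the resulting long exact sequence exactly as you do.
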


\begin{proof}
$\Rightarrow$ Easy.

$\Leftarrow$
Let $0\ra M\ra E(M)\ra N\ra 0$ be an exact sequence where $E(M)$ is the injective envelope of $M$ and $N=\mbox{Coker}(M\ra E(M))$. For any $R$-module $L$, we have the following exact sequence
$$
\Hom_R(L,E(M))\ra \Hom_R(L,N)\ra \Ext^1_R(L,M)\ra 0.
$$
Since $E(M)$ belongs to $\WI_{\leq n}$, by hypothesis $E(M)\ra N$ is  a $\WI_{\leq n}$-precover of $N$. Thus, if $L\in \WI_{\leq n}$, then we have an exact sequence $\Hom_R(L,E(M))\ra \Hom_R(L,N)\ra  0.$ Comparing it with the above exact sequence, we will have that $\Ext^1_R(L,M)=0$ for any $L\in \WI_{\leq n}$, that is, $M\in {\WI_{\leq n}}^\bot$.
\end{proof}

Similarly, we have

\begin{proposition}
For an $R$-module $M$, if $M\in {^\bot\WI_{\leq n}}$, then for every short exact sequence $0\ra L\ra W\ra M\ra 0$ with $W\in \WI_{\leq n}$, $L\ra W$ is a $\WI_{\leq n}$-preenvelope of $L$. The converse holds if moreover $\wid({_R R})\leq n$.
\end{proposition}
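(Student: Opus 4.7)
The plan is to handle the two directions separately; the first is a direct application of the long exact $\Ext$ sequence, while the second requires producing a suitable short exact sequence to test the preenvelope condition against.

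For the forward direction, assume $M\in{}^\perp\WI_{\leq n}$ and let $0\to L\to W\to M\to 0$ be exact with $W\in\WI_{\leq n}$. To verify that $L\to W$ is a $\WI_{\leq n}$-preenvelope, I would fix an arbitrary $W'\in\WI_{\leq n}$ and apply $\Hom_R(-,W')$ to the sequence. The resulting long exact sequence
\[
\Hom_R(W,W')\to \Hom_R(L,W')\to \Ext^1_R(M,W')
\]
forces surjectivity of the first map, because $\Ext^1_R(M,W')=0$ by the assumption $M\in{}^\perp\WI_{\leq n}$. Any morphism $L\to W'$ therefore factors through $L\to W$, which is precisely the preenvelope property.

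For the converse, assume $\wid({}_R R)\leq n$ and that every short exact sequence $0\to L\to W\to M\to 0$ with $W\in\WI_{\leq n}$ yields a $\WI_{\leq n}$-preenvelope $L\to W$. The key observation is that $R\in\WI_{\leq n}$, and since (as noted before Theorem \ref{n-wi-cover}) $\WI_{\leq n}$ is closed under coproducts, every free $R$-module lies in $\WI_{\leq n}$. Hence I choose a free presentation $0\to K\to F\to M\to 0$ with $F$ free (so $F\in\WI_{\leq n}$). By hypothesis, $K\to F$ is a $\WI_{\leq n}$-preenvelope of $K$.

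Now, for an arbitrary $W'\in\WI_{\leq n}$, applying $\Hom_R(-,W')$ to the presentation yields
\[
\Hom_R(F,W')\to\Hom_R(K,W')\to\Ext^1_R(M,W')\to\Ext^1_R(F,W').
\]
Since $F$ is projective, $\Ext^1_R(F,W')=0$, and since $K\to F$ is a $\WI_{\leq n}$-preenvelope, the first arrow is surjective. Therefore $\Ext^1_R(M,W')=0$ for every $W'\in\WI_{\leq n}$, giving $M\in{}^\perp\WI_{\leq n}$. The main subtlety is simply recognizing that the hypothesis $\wid({}_R R)\leq n$ is exactly what is needed to place a free cover of $M$ inside $\WI_{\leq n}$, so that the preenvelope assumption becomes applicable; the rest is bookkeeping with the $\Ext$ sequence.
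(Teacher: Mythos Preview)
Your proof is correct and is precisely the dualization the paper has in mind when it writes ``Similarly, we have'' before this proposition. The forward direction mirrors the ``Easy'' step of the preceding proposition, and for the converse you replace the injective envelope $E(M)$ (which was automatically in $\WI_{\leq n}$) by a free presentation $F\twoheadrightarrow M$, using the extra hypothesis $\wid({_R R})\leq n$ together with closure of $\WI_{\leq n}$ under coproducts to place $F$ in $\WI_{\leq n}$, and using projectivity of $F$ to kill $\Ext^1_R(F,W')$ just as injectivity of $E(M)$ killed $\Ext^1_R(L,E(M))$ in the previous argument.
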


\begin{proposition}
For a right $R$-module $M$, if there is an exact sequence $0\ra L\ra F\ra M\ra 0$ with $F$ flat and $L\ra F$ a $\WF_{\leq n}$-preenvelope of $L$,   then $M\in {^\top \WI_{\leq n}}$. The converse holds if moreover $M$ is finitely presented.
\end{proposition}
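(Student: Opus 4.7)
The plan is to handle the two implications separately.

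For the forward direction, let $W\in \WI_{\leq n}$; the goal is to show $\Tor_1^R(M,W)=0$. By Corollary \ref{cor}(1) the character module $W^+$ lies in $\WF_{\leq n}$. Tensoring the given exact sequence $0\rightarrow L\rightarrow F\rightarrow M\rightarrow 0$ with $W$, using $\Tor_1^R(F,W)=0$ (since $F$ is flat), and then applying the exact functor $(-)^+=\Hom_{\mathbb{Z}}(-,\mathbb{Q}/\mathbb{Z})$ together with the adjunction $\Hom_R(-,W^+)\cong (-\otimes_RW)^+$, one obtains the exact sequence
$$0\rightarrow \Hom_R(M,W^+)\rightarrow \Hom_R(F,W^+)\rightarrow \Hom_R(L,W^+)\rightarrow \Tor_1^R(M,W)^+\rightarrow 0.$$
Because $L\rightarrow F$ is a $\WF_{\leq n}$-preenvelope and $W^+\in \WF_{\leq n}$, the middle map is surjective, forcing $\Tor_1^R(M,W)^+=0$; hence $\Tor_1^R(M,W)=0$.

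For the converse, assume in addition that $M$ is finitely presented and that $M\in {^\top\WI_{\leq n}}$. Choose a free presentation $0\rightarrow L\rightarrow F\rightarrow M\rightarrow 0$ with $F$ finitely generated free. Since $F$ is projective, showing that $L\rightarrow F$ is a $\WF_{\leq n}$-preenvelope is equivalent (via the long Ext sequence) to showing $\Ext^1_R(M,X)=0$ for every $X\in \WF_{\leq n}$. Fix such an $X$; its canonical pure embedding $X\hookrightarrow X^{++}$ gives a pure exact sequence $0\rightarrow X\rightarrow X^{++}\rightarrow X^{++}/X\rightarrow 0$. Since $M$ is finitely presented, $\Hom_R(M,-)$ is exact on this sequence, so the connecting map $\Hom_R(M,X^{++}/X)\rightarrow \Ext^1_R(M,X)$ vanishes and the induced map $\Ext^1_R(M,X)\rightarrow \Ext^1_R(M,X^{++})$ is injective. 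Now deriving the adjunction $\Hom_R(-,X^{++})\cong (-\otimes_RX^+)^+$ (and using exactness of $(-)^+$) yields the standard isomorphism $\Ext^1_R(M,X^{++})\cong \Tor_1^R(M,X^+)^+$. Since $X\in\WF_{\leq n}$ implies $X^+\in \WI_{\leq n}$ by Corollary \ref{cor}(2), and $M\in {^\top\WI_{\leq n}}$, we have $\Tor_1^R(M,X^+)=0$. Therefore $\Ext^1_R(M,X^{++})=0$, and hence $\Ext^1_R(M,X)=0$.

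The main obstacle lies in the converse. The hypothesis $M\in {^\top\WI_{\leq n}}$ controls only $\Tor_1^R(M,W)$ for weak injective \emph{left} modules $W$, whereas the target is the vanishing of $\Ext^1_R(M,X)$ for weak flat \emph{right} modules $X$. The formula $\Ext^1_R(M,N^+)\cong \Tor_1^R(M,N)^+$ bridges these worlds, but only relates $\Ext$ at $X^{++}$ to $\Tor$ at $X^+$, not $\Ext$ at $X$ itself. The crucial reduction is therefore passing from $X$ up to $X^{++}$ along the pure embedding, which succeeds precisely because $M$ is finitely presented and so $\Hom_R(M,-)$ preserves pure exactness.
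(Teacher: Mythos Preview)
Your forward direction is essentially the paper's argument, just run in the other order (you dualise the Tor sequence and then use adjunction; the paper uses adjunction first and then compares with the Tor sequence).

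For the converse you take a genuinely different route. The paper picks a presentation $0\to K\to P\to M\to 0$ with $P$ finitely generated projective and $K$ finitely generated, and for each $W\in\WF_{\leq n}$ compares the top and bottom rows of
\[
\xymatrix{
0\ar[r] & K\otimes_R W^+\ar[r]\ar[d]^{\kappa_K} & P\otimes_R W^+\ar[d]^{\kappa_P}\\
 & \Hom_R(K,W)^+\ar[r] & \Hom_R(P,W)^+
}
\]
using that $\kappa_P$ is an isomorphism (since $P$ is finitely generated projective) and that $\kappa_K$ is surjective (since $K$ is finitely generated); the top row is exact because $\Tor_1^R(M,W^+)=0$, and a short diagram chase then forces $\Hom_R(P,W)\to\Hom_R(K,W)$ to be surjective. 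You instead first reduce, via projectivity of $F$, to proving $\Ext^1_R(M,X)=0$ for every $X\in\WF_{\leq n}$, and then exploit the pure embedding $X\hookrightarrow X^{++}$ together with pure projectivity of the finitely presented module $M$ to inject $\Ext^1_R(M,X)$ into $\Ext^1_R(M,X^{++})\cong\Tor_1^R(M,X^+)^+$, which vanishes by hypothesis. Your argument is cleaner conceptually and never touches the auxiliary maps $\kappa$ or the finite generation of $L$ directly; the paper's argument is more elementary in that it avoids invoking pure projectivity and the double-dual embedding. Both are correct.
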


\begin{proof}
Let $0\ra L\ra F\ra M\ra 0$ be an exact sequence  with $F$ flat and $L\ra F$ a $\WF_{\leq n}$-preenvelope of $L$. For any $W\in \WI_{\leq n}$, since $W^+\in \WF_{\leq n}$,  we have an exact sequence $\Hom_R(F,W^+)\ra \Hom_R(L,W^+)\ra 0$. By the adjoint isomorphism, the sequence $(F\otimes_RW)^+\ra (L\otimes_RW)^+\ra 0$ is exact, which implies $0\ra L\otimes_RW\ra F\otimes_RW$ is also exact. On the other hand, by applying the functor $-\otimes_RW$ to the previous short exact sequence, we get the following exact sequence
$$
0\ra \Tor^1_R(M,W)\ra L\otimes_RW\ra F\otimes_RW.
$$
Therefore, we have $\Tor_1^R(M,W)=0$ for any $W\in \WI_{\leq n}$, that is, $M\in {^\top \WI_{\leq n}}$.

For the converse, since $M$ is finitely presented, by definition there is an exact sequence $0\ra K\ra P\ra M\ra 0$ with $P$ finitely generated projective and $K$ finitely generated. To end the proof, it suffices to prove that $K\ra P$ is a $\WF_{\leq n}$-preenvelope, or equivalently prove that the sequence $\Hom_R(P,W)\ra \Hom_R(K,W)\ra 0$ is exact for any $W\in \WF_{\leq n}$. Indeed, for any $W\in \WF_{\leq n}$, we have $W^+\in \WI_{\leq n}$, and hence $\Tor_1^R(M,W^+)=0$ by hypothesis. This implies an exact sequence $0\ra K\otimes_RW^+\ra P\otimes_RW^+$ and a commutative diagram as follows:
$$
\xymatrix{
0\ar[r] & K\otimes_RW^+\ar[r]\ar[d]^{\kappa_K}& P\otimes_RW^+\ar[d]^{\kappa_P}\\
&\Hom_R(K,W)^+\ar[r]&\Hom_R(P,W)^+
}
$$
By \cite[Lem. 3.55]{Ro}, $\kappa_P$ is an isomorphism. Now since  $K$ is finitely generated, there is an exact sequence $P'\ra K\ra 0$ with $P'$  finitely generated projective. Consider the following commutative diagram:
$$
\xymatrix{
 P'\otimes_RW^+\ar[r]\ar[d]^{\kappa_{P'}}& K\otimes_RW^+\ar[d]^{\kappa_K}\ar[r]&0\\
\Hom_R(P',W)^+\ar[r]&\Hom_R(K,W)^+\ar[r] &0
}
$$
Since ${\kappa_{P'}}$ is an isomorphism, ${\kappa_K}$ is an epimorphism. Therefore, from the first commutative diagram we get that $0\ra\Hom_R(P,W)\ra \Hom_R(K,W)$ is exact, as desired.
\end{proof}

 We denote by  $\mathcal{I}$ and $\mathcal{F}$   the classes of injective and flat $R$(resp. right $R$)-modules respectively.

\begin{proposition}
$(1)$ $\mathcal{I}={\WI_{\leq n}}^\bot \bigcap \mathcal{WI}_{\leq n+1}$;
$(2)$  $\mathcal{F}={^\top\WI_{\leq n}} \bigcap \mathcal{WF}_{\leq n+1}$.
\end{proposition}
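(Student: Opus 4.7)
For part (1), the inclusion $\mathcal{I}\subseteq {\WI_{\leq n}}^\bot\cap\WI_{\leq n+1}$ is trivial: an injective module kills $\Ext^1_R(-,M)$ on every module (in particular on members of $\WI_{\leq n}$), and it already has weak injective dimension $0$. For the reverse inclusion, take $M\in{\WI_{\leq n}}^\bot\cap\WI_{\leq n+1}$ and embed $M$ into its injective envelope, producing a short exact sequence
\begin{equation*}
0\ra M\ra E\ra N\ra 0.
\end{equation*}
The plan is to prove that $N\in\WI_{\leq n}$ and then use the orthogonality to split the sequence. For the dimension shift, I would apply $\Hom_R(L,-)$ for an arbitrary super finitely presented $L$ and look at the long exact sequence in degree $n+1$: since $\Ext^{n+1}_R(L,E)=0=\Ext^{n+2}_R(L,E)$ (because $E$ is injective) and $\Ext^{n+2}_R(L,M)=0$ (because $\wid_R(M)\leq n+1$), we obtain $\Ext^{n+1}_R(L,N)=0$, so $\wid_R(N)\leq n$.

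With $N\in\WI_{\leq n}$ in hand, the assumption $M\in{\WI_{\leq n}}^\bot$ gives $\Ext^1_R(N,M)=0$, so the sequence $0\ra M\ra E\ra N\ra 0$ splits. Hence $M$ is a direct summand of the injective module $E$ and is therefore injective.

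For part (2), the easy inclusion $\mathcal{F}\subseteq{^\top\WI_{\leq n}}\cap\WF_{\leq n+1}$ is again immediate. The strategy for the reverse inclusion is to reduce to part (1) by passing to the character module. By Corollary $\ref{cor}$(2) we have $M\in\WF_{\leq n+1}$ if and only if $M^+\in\WI_{\leq n+1}$. Moreover, the standard adjoint isomorphism
\begin{equation*}
\bigl(\Tor_1^R(M,W)\bigr)^+\cong \Ext^1_R(W,M^+)
\end{equation*}
combined with the fact that $\mathbb{Q}/\mathbb{Z}$ is a faithful cogenerator gives $M\in{^\top\WI_{\leq n}}$ if and only if $M^+\in{\WI_{\leq n}}^\bot$. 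Hence $M^+\in{\WI_{\leq n}}^\bot\cap\WI_{\leq n+1}$, and part (1) forces $M^+$ to be injective. By \cite[Prop.~3.54]{Ro}, $M$ is flat, as required.

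The only nontrivial step is the dimension-shift computation in part (1), which is routine once framed in terms of the super finitely presented test modules; the rest of the argument is bookkeeping, with the splitting of the injective-envelope sequence doing the essential work. Part (2) requires no new ideas and falls out of part (1) via the character-module duality already developed in Section 2.
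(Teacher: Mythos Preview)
Your proof is correct and follows essentially the same route as the paper: embed $M$ in its injective envelope, use $\wid_R(M)\leq n+1$ to get the cokernel into $\WI_{\leq n}$, split via the orthogonality hypothesis, and for part (2) pass to the character module and invoke part (1). The only difference is cosmetic: you spell out the dimension-shift argument for $\wid_R(N)\leq n$, whereas the paper simply asserts it.
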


\begin{proof}
(1) Clearly, $\mathcal{I}\subseteq{\WI_{\leq n}}^\bot \bigcap \mathcal{WI}_{\leq n+1}$. For any $M\in {\WI_{\leq n}}^\bot \bigcap \mathcal{WI}_{\leq n+1}$, consider an exact sequence $0\ra M\ra E(M)\ra E(M)/M\ra 0$, where $E(M)$ is the injective envelope of $M$. Since $M\in \mathcal{WI}_{\leq n+1}$, we have $E(M)/M\in \mathcal{WI}_{\leq n}$. Moreover, $M\in {\WI_{\leq n}}^\bot$ implies $\Ext^1_R(E(M)/M,M)=0$. Hence $M$ is injective as a direct summand of $E(M)$, that is, ${\WI_{\leq n}}^\bot \bigcap \mathcal{WI}_{\leq n+1}\subseteq \mathcal{I}$. Therefore, $\mathcal{I}={\WI_{\leq n}}^\bot \bigcap \mathcal{WI}_{\leq n+1}$.

(2) Clearly, $\mathcal{F}\subseteq{^\top\WI_{\leq n}} \bigcap \mathcal{WF}_{\leq n+1}$. For any $M\in {^\top\WI_{\leq n}} \bigcap \mathcal{WF}_{\leq n+1}$, we will get $M^+\in {\WI_{\leq n}}^\bot$ from the isomorphism $\Ext^1_R(L,M^+)\cong \Tor^R_1(M,L)^+$ (\cite[Lem. 1.2.11]{GT}). Moreover, since $M\in \mathcal{WF}_{\leq n+1}$, we have $M^+\in \mathcal{WI}_{\leq n+1}$ by Corollary \ref{cor}, and hence $M^+\in \mathcal{I}$ by (1). This implies that $M$ is flat. Thus  $\mathcal{F}={^\top\WI_{\leq n}} \bigcap \mathcal{WF}_{\leq n+1}$.
\end{proof}

The cotorsion theory plays a crucial role in the study of the existence of covers and envelopes. Recall that a pair $(\mathcal{C},\mathcal{D})$ of classes of $R$-modules is called a \emph{cotorsion pair} if $\mathcal{C}^\bot=\mathcal{D}$ and ${^\bot\mathcal{D}}=\mathcal{C}$. A cotorsion pair $(\mathcal{C},\mathcal{D})$ is said to be \emph{perfect} if  every $R$-module admits a $\mathcal{C}$-cover and a $\mathcal{D}$-envelope. A cotorsion pair $(\mathcal{C},\mathcal{D})$ is said to be \emph{hereditary} if whenever $0\ra X\ra Y\ra Z\ra 0$ is exact with $Y,Z\in \mathcal{C}$, then $X\in \mathcal{C}$, or equivalently, if whenever $0\ra X\ra Y\ra Z\ra 0$ is exact with $X,Y\in \mathcal{D}$, then $Z\in \mathcal{D}$.

We have known that the class ${\WI_{\leq n}}$ is covering by Theorem \ref{n-wi-cover}. In the following, we will prove that  ${\WI_{\leq n}}^\bot$ is enveloping if $\wid({_RR})\leq n$.

\begin{proposition}
 If $\wid({_RR})\leq n$, then $({\WI_{\leq n}}, {\WI_{\leq n}}^\bot)$ is a perfect cotorsion pair.
\end{proposition}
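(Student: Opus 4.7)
The plan is to invoke the third part of Holm and Jørgensen's theorem \cite[Thm.~3.1(c)]{HJ09}, which asserts that every \emph{perfect} duality pair $(\mathcal{C},\mathcal{D})$ gives rise to a perfect cotorsion pair $(\mathcal{C},\mathcal{C}^\perp)$. So the strategy is to upgrade the duality pair $(\WI_{\leq n},\WF_{\leq n})$ obtained in Proposition \ref{dual pair 2} to a perfect one under the hypothesis $\wid({_RR})\leq n$.

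First I would list the properties of $\WI_{\leq n}$ already known from the previous sections: it is a duality pair with $\WF_{\leq n}$ (Proposition \ref{dual pair 2}), and it is coproduct-closed (the remark preceding Theorem \ref{n-wi-cover}). Thus, to match the definition of a perfect duality pair, it remains to check two conditions: that $\WI_{\leq n}$ is closed under extensions, and that $R\in \WI_{\leq n}$. The latter is precisely the assumption $\wid({_RR})\leq n$. For the former, given a short exact sequence $0\to X\to Y\to Z\to 0$ with $X,Z\in \WI_{\leq n}$, I would apply $\Hom_R(L,-)$ for any super finitely presented $R$-module $L$ and read off from the resulting long exact sequence that $\Ext^{n+1}_R(L,Y)=0$, since it is sandwiched between two vanishing terms. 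This yields $Y\in \WI_{\leq n}$.

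With closure under coproducts, closure under extensions, and $R\in\WI_{\leq n}$ in hand, the duality pair $(\WI_{\leq n},\WF_{\leq n})$ is perfect in the sense of \cite{HJ09}, so \cite[Thm.~3.1(c)]{HJ09} delivers directly that $(\WI_{\leq n},{\WI_{\leq n}}^{\bot})$ is a perfect cotorsion pair, which is exactly the statement.

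The potential obstacle is essentially bookkeeping rather than mathematical: one must make sure the definition of "perfect duality pair" actually used in \cite{HJ09} coincides with the list of properties verified here (coproduct-closure, closure under extensions, containing $R$), and that these in particular force $\WI_{\leq n}$ to be covering, closed under direct summands and pure subobjects, etc., so that the conclusion of \cite[Thm.~3.1(c)]{HJ09} applies without further hypotheses. All of these latter closure properties have already been established in earlier results of this section, so the verification reduces to citing them in order.
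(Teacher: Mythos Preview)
Your proposal is correct and follows essentially the same route as the paper. The only cosmetic difference is the citation: you invoke \cite[Thm.~3.1(c)]{HJ09} via the notion of a \emph{perfect} duality pair, whereas the paper extracts the pure-closure properties from Corollary~\ref{cor11} and then applies \cite[Thm.~3.4]{HJ08} directly; since the former is proved by reducing to the latter, the verifications (coproduct-closure, extension-closure, $R\in\WI_{\leq n}$) are identical.
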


\begin{proof}
By Corollary \ref{cor11}, ${\WI_{\leq n}}$ is closed under pure submodules and pure quotients. Moreover, from the previous argument, we have that ${\WI_{\leq n}}$ is closed under extensions and coproducts.  Since $\wid({_RR})\leq n$, we also have that the class ${\WI_{\leq n}}$ contains the ground ring $R$. Thus  $({\WI_{\leq n}}, {\WI_{\leq n}}^\bot)$ is a perfect cotorsion pair by \cite[Thm. 3.4]{HJ08}.
\end{proof}

Similarly, since ${\WF_{\leq n}}$ is closed  under  extensions, coproducts,  pure submodules and pure quotients, and contains  the ground ring $R$, we can obtain

\begin{proposition}
 $({\WF_{\leq n}}, {\WF_{\leq n}}^\bot)$ is a perfect and hereditary cotorsion pair.
\end{proposition}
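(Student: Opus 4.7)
The plan is to mimic exactly the argument given for $(\WI_{\leq n}, \WI_{\leq n}^\perp)$ in the previous proposition, but with the pleasant simplification that no hypothesis on $\wid({_R}R)$ is required, and then to add a separate short verification of the hereditary property.

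First, I would invoke \cite[Thm.~3.4]{HJ08} with $\mathcal{C}=\WF_{\leq n}$. To apply it I must verify: (i) $\WF_{\leq n}$ is closed under pure submodules and pure quotients; (ii) $\WF_{\leq n}$ is closed under extensions; (iii) $\WF_{\leq n}$ is closed under coproducts; and (iv) $R\in\WF_{\leq n}$. Item (i) is the Corollary immediately after Proposition~\ref{dual pair 1}, obtained from the duality pair $(\WF_{\leq n},\WI_{\leq n})$ via \cite[Thm.~3.1]{HJ09}. Item (iii) was already used in the discussion preceding Theorem~\ref{n-wf-cover}, since $\Tor$ commutes with coproducts in the first variable and $\WF$ itself is coproduct-closed by \cite[Prop.~3.4]{GW}. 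Item (iv) is automatic: $R$ is projective hence flat, so $R\in\WF_{\leq 0}\subseteq \WF_{\leq n}$; this is precisely why, unlike in the previous proposition, no assumption on the ring is needed. For (ii), a short exact sequence $0\to X\to Y\to Z\to 0$ with $X,Z\in\WF_{\leq n}$ gives, for any super finitely presented $L$, the exact piece
\[
\Tor_{n+1}^R(X,L)\to \Tor_{n+1}^R(Y,L)\to \Tor_{n+1}^R(Z,L)
\]
with outer terms zero, so $Y\in\WF_{\leq n}$. This yields the perfect cotorsion pair.

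Second, for the hereditary property, it suffices to show that $\WF_{\leq n}$ is closed under kernels of epimorphisms between its objects. Given $0\to X\to Y\to Z\to 0$ with $Y,Z\in\WF_{\leq n}$ and any super finitely presented $L$, the Tor long exact sequence supplies
\[
\Tor_{n+2}^R(Z,L)\to \Tor_{n+1}^R(X,L)\to \Tor_{n+1}^R(Y,L).
\]
Both flanking terms vanish because $\wfd_R(Z)\leq n$ and $\wfd_R(Y)\leq n$, whence $\Tor_{n+1}^R(X,L)=0$ and $X\in\WF_{\leq n}$. By the standard equivalent formulation of hereditariness recalled just before the statement, this proves that $(\WF_{\leq n},\WF_{\leq n}^\perp)$ is hereditary.

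There is no real obstacle here; the proof is a bookkeeping exercise that reuses the dual pair machinery, the flatness of $R$, and one application of the Tor long exact sequence. The only point that deserves a line of care is flagging that the hypothesis on $\wid({_R}R)$ used in the previous proposition is unnecessary in the present setting, since $R$ itself is (weak) flat.
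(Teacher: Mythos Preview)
Your proof is correct and follows essentially the same approach as the paper: the paper states in the paragraph preceding the proposition that $\WF_{\leq n}$ is closed under extensions, coproducts, pure submodules and pure quotients, and contains $R$, then applies \cite[Thm.~3.4]{HJ08} for perfectness, and its one-line proof of hereditariness is exactly your kernel-closure argument (stated without the explicit $\Tor$ computation). You have simply filled in the details the paper leaves implicit.
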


\begin{proof}
The hereditary property follows from the fact that if there is an exact sequence $0\ra X\ra Y\ra Z\ra 0$ with $Y,Z\in {\WF_{\leq n}}$, then $X\in {\WF_{\leq n}}$.
\end{proof}

\subsection{More descriptions on the case n=0}

Recall that a $\mathcal {C}$-envelope $\varphi: M
\rightarrow C$ of $M$ is said to have the \emph{unique mapping property} \cite{Di} if the morphism $g$ in the  diagram (\ref{cover diag}) of Definition \ref{def-env} is unique; and a monomorphism  $\varphi: M
\rightarrow C$ with $C\in \mathcal {C}$ is said to be a \emph{special $\mathcal{C}$-preenvelope} \cite{EJ} of $M$ if $\mbox{Coker}\varphi\in {^\perp\mathcal{C}}$. Dually, one has the notions of  $\mathcal {C}$-covers with {unique mapping property} and special $\mathcal{C}$-preenvelopes of a module.

Let
$$
^\perp{\mathcal{WI}}=\{M\mid \mbox{Ext}^1_R(M,W)=0 \mbox{ for any } W\in \mathcal{WI}\}.
$$
Following \cite[Def. 7.1.2]{EJ}, it is easy to verify that $(^\perp{\mathcal{WI}},\mathcal{WI})$ is a cotorsion theory which is cogenerated
by the representative set of all super finitely presented  $R$-modules.
So, by \cite[Thm. 7.4.1 and Def. 7.1.5]{EJ}, every  $R$-module $M$ has a special weak injective preenvelope, that is,
 there is an exact sequence $0\rightarrow M\rightarrow W\rightarrow V\rightarrow 0$ with $W$ weak injective and $V\in {^\perp{\mathcal{WI}}}$. Meanwhile, every $R$-module $M$ has a special $^\perp{\mathcal{WI}}$-precover, that is,  there is an exact sequence $0\rightarrow L\rightarrow Q\rightarrow M\rightarrow 0$ with $Q\in {^\perp{\mathcal{WI}}}$ and $L$ weak injective. Moreover, by Wakamatsu's Lemma (\cite[Props. 7.2.3 and 7.2.4]{EJ}), if $\epsilon:M\rightarrow W$ is a weak injective envelope of $M$, then $\mbox{Coker}\epsilon$ is in  $^\perp{\mathcal{WI}}$; and if $\rho:Q\rightarrow M$ is a $^\perp{\mathcal{WI}}$-cover, then $\mbox{Ker}\rho$ is weak injective.

\begin{proposition}\label{11111}
The following are equivalent:

$(1)$ Every  $R$-module has a weak injective cover with the unique mapping property;

$(2)$ $\operatorname{l.sp.gldim}(R)\leq 2$.
\end{proposition}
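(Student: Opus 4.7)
The key reformulation is that a weak injective cover $\varphi\colon W\to M$ with kernel $K=\ker\varphi$ has the unique mapping property if and only if $\Hom_R(V,K)=0$ for every $V\in\WI$: any $h\colon V\to W$ with $\varphi h=0$ factors through $K\hookrightarrow W$, and UMP forces that factorization to vanish.

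For $(2)\Rightarrow(1)$, I would invoke Proposition~\ref{at most n+1} with $n=1$, which states that $\operatorname{l.sp.gldim}(R)\le 2$ is equivalent to every quotient of a weak injective module lying in $\WI_{\le 1}$. Given $\varphi\colon W\to M$, $K=\ker\varphi$, and $f\colon V\to K$ with $V\in\WI$, I set $V'=\operatorname{im}(f)\subseteq K\subseteq W$. As a quotient of $V$, $V'$ has $\wid_R(V')\le 1$. A long exact sequence applied to $0\to V'\to W\to W/V'\to 0$, using $\Ext_R^{\,i}(F,W)=0$ and $\Ext_R^{\,i+1}(F,V')=0$ for all $i\ge 1$ and every super finitely presented $F$, yields $W/V'\in\WI$. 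Since $V'\subseteq\ker\varphi$, the map $\varphi$ factors through the projection $\pi\colon W\to W/V'$ as $\varphi=\bar\varphi\pi$, making $\bar\varphi\colon W/V'\to M$ a weak injective precover. Cover minimality then produces $\psi\colon W/V'\to W$ with $\varphi\psi=\bar\varphi$, so that $\psi\pi\colon W\to W$ satisfies $\varphi(\psi\pi)=\varphi$ and must be an automorphism. Since $(\psi\pi)(V')=\psi(0)=0$ and $\psi\pi$ is injective, $V'=0$, i.e.\ $f=0$, proving UMP.

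For $(1)\Rightarrow(2)$, Proposition~\ref{at most n+1} reduces the task to showing $\wid_R(M)\le 2$ for every $M$. My plan is to form a special weak injective preenvelope $0\to M\to W^0\to L^0\to 0$ with $L^0\in{}^\perp\WI$ from the cotorsion pair $({}^\perp\WI,\WI)$, and then iterate to obtain $0\to L^0\to W^1\to L^1\to 0$; splicing yields $0\to M\to W^0\to W^1\to L^1\to 0$, so $\wid_R(M)\le 2$ precisely when $L^1\in\WI$. To show $L^1\in\WI$, I take its weak injective cover $\psi\colon E\to L^1$, which is surjective because the surjection $W^1\to L^1$ factors through $\psi$ via the precover property. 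Writing $K'=\ker\psi$, the UMP yields $\Hom_R(\WI,K')=0$ while Wakamatsu's Lemma gives $K'\in\WI^\perp$. Since $L^1\in{}^\perp\WI$ and $E\in\WI$ one has $\Ext_R^1(L^1,E)=0$, so the sequence $0\to K'\to E\to L^1\to 0$ is classified by an element of $\Ext_R^1(L^1,K')$; the goal is to show this element vanishes, forcing the splitting and presenting $L^1$ as a direct summand of the weak injective $E$.

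The main obstacle lies in this last step of $(1)\Rightarrow(2)$: the vanishing $\Ext_R^1(L^1,K')=0$ requires more than the Wakamatsu Ext-orthogonality alone, and the plan is to combine the Hom-vanishing $\Hom_R(\WI,K')=0$ with the UMP hypothesis applied to covers of auxiliary modules related to $K'$, together with cover minimality, in a delicate interplay that produces the decisive splitting.
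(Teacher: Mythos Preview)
Your argument for $(2)\Rightarrow(1)$ is correct and is essentially the paper's proof: the paper works with a map $\delta\colon Q\to W$ from a weak injective $Q$ and shows $W/\operatorname{im}\delta$ is weak injective (via $\wid_R(\ker\delta)\le 2$, hence $\wid_R(\operatorname{im}\delta)\le 1$), then uses the cover property exactly as you do. Your use of Proposition~\ref{at most n+1} to get $\wid_R(V')\le 1$ directly is a minor cosmetic variation.

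Your direction $(1)\Rightarrow(2)$ has a genuine gap, and you have correctly located it. Trying to prove $\Ext_R^1(L^1,K')=0$ is the wrong target: the information you have about $K'$ (namely $\Hom_R(\WI,K')=0$ and $K'\in\WI^\perp$) controls maps and extensions \emph{from weak injectives}, but $L^1$ is not yet known to be weak injective, so neither fact applies. The paper bypasses this entirely by constructing the section directly. With your notation, lift the surjection $W^1\to L^1$ through the cover $\psi\colon E\to L^1$ to obtain $\delta\colon W^1\to E$ with $\psi\delta$ equal to that surjection. Now look at the composite $W^0\to W^1\xrightarrow{\delta} E$: its image under $\psi$ is zero because $W^0\to W^1\to L^1$ is zero, and the zero map $W^0\to E$ also has zero image under $\psi$. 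Since $W^0$ is weak injective, the unique mapping property forces the composite $W^0\to W^1\to E$ itself to be zero. Hence $\delta$ vanishes on the image of $W^0\to W^1$, which equals $\ker(W^1\to L^1)$, so $\delta$ factors as $\eta\circ(W^1\to L^1)$ for some $\eta\colon L^1\to E$. Then $\psi\eta=1_{L^1}$, exhibiting $L^1$ as a summand of $E\in\WI$. The point you missed is that the UMP should be applied not to maps into $K'$ but to the map from the \emph{earlier} weak injective $W^0$ in your two-step resolution; this is precisely why two steps of special preenvelopes are needed.
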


\begin{proof}
(1) $\Rightarrow$ (2). Let $M$ be any  $R$-module. From the previous argument, we have the following two exact sequences:
$$
  0\rightarrow M\stackrel{\epsilon^0}\rightarrow W^0\stackrel{\nu^0}\rightarrow V^1\rightarrow 0, \
  0\rightarrow V^1\stackrel{\epsilon^1}\rightarrow W^1\stackrel{\nu^1}\rightarrow V^2\rightarrow 0
$$
where ${\epsilon^0}$ and ${\epsilon^1}$ are special weak injective preenvelopes of $M$ and $V^1$, respectively, and thus $V^1, V^2\in{^\perp{\mathcal{WI}}}$. Assembling these two sequences, we get the following commutative diagram:
$$
\xymatrix{
0\ar[r]&M\ar[r]^{\epsilon^0}&W^0\ar[rr]^{\epsilon^1\nu^0}\ar[rd]^{\nu^0}&&W^1\ar[r]^{\nu^1}&V^2\ar[r]&0\\
&&&V^1\ar[ru]^{\epsilon^1}\ar[rd]&&&\\
&&0\ar[ru]&&0&&
}
$$
where the top row is exact. In the following, we will prove that $V^2$ is weak injective.

Let $\varepsilon:W\rightarrow V^2$ be a weak injective cover with the unique mapping property. Then there exists $\delta:W^1\rightarrow W$ such that $\nu^1=\varepsilon\delta$. So $\varepsilon\delta\epsilon^1\nu^0=\nu^1\epsilon^1\nu^0=0$, and thus $\delta\epsilon^1\nu^0=0$. This shows that $\mbox{Ker}\delta\supseteq\mbox{Im}\epsilon^1\nu^0=\mbox{Ker}\nu^1$. Consequently, there exists $\eta:V^2\rightarrow W$ such that $\delta=\eta\nu^1$, and hence we have the following commutative diagram:
$$
\xymatrix{
&&&&W\ar@<1ex>[d]^\varepsilon&\\
0\ar[r]&M\ar[r]^{\epsilon^0}&W^0\ar[r]^{\epsilon^1\nu^0}&W^1\ar[r]^{\nu^1}\ar@{.>}[ru]^\delta&V^2\ar[r]\ar@{.>}[u]^\eta&0
}
$$
Since $\varepsilon\eta\nu^1=\varepsilon\delta=\nu^1$ and $\nu^1$ is epic, we have $\varepsilon\eta=1_{V^2}$. Thus $V^2$ is weak injective as a direct summand of $W$, and hence $\wid_R(M)\leq 2$. Therefore, $\operatorname{l.sp.gldim}(R)\leq 2$ by \cite[Thm. 3.8]{GW}.

(2) $\Rightarrow$ (1). First of all, every  $R$-module has a weak injective cover by \cite[Thm. 3.1]{GH}. Choose a weak injective cover $\epsilon:W\rightarrow M$ of $M$. To end the proof, we only need to show that, for any weak injective  $R$-module $Q$ and any map $\delta:Q\rightarrow W$, $\epsilon\delta=0$ implies $\delta=0$. Indeed, since $\mbox{Im}\delta\subseteq\mbox{Ker}\epsilon$, there exists $\eta:\mbox{Coker}\delta\rightarrow M$ such that $\eta\pi=\epsilon$, where $\pi:W\rightarrow \mbox{Coker}\delta$ is the canonical projection. Note that $\wid_R(\mbox{Ker}\delta)\leq 2$  since $\operatorname{l.sp.gldim}(R)\leq 2$, and thus $\mbox{Coker}\delta$ is weak injective. It follows then that there exists $q:\mbox{Coker}\delta\rightarrow W$ such that $\eta=\epsilon q$, that is, there is the following commutative diagram:
$$
\xymatrix{
0\ar[r]&\mbox{Ker}\delta \ar[r]^i &Q\ar[r]^\delta\ar[rd]_0 &W\ar@<1ex>[r]^\pi\ar[d]^\epsilon&\mbox{Coker}\delta \ar@{.>}[l]^q\ar[r]\ar@{.>}[ld]^\eta& 0\\
&&&M&&
}
$$
Since $\epsilon (q\pi)=(\epsilon q)\pi=\eta\pi=\epsilon$ and $\epsilon$ is a cover, $q\pi$ is an isomorphism, and thus $\pi$ is monic. Therefore, $\delta=0$, as desired.
\end{proof}

As  is known to all, every module has a weak injective preenvelope, however,
weak injective envelopes may not exist in general. Now we give a  sufficient condition to ensure the existence of weak injective envelopes.

\begin{proposition}
If ${^\perp\WI}$ is closed under pure quotient, then $\WI$ is enveloping. In this case, ${^\perp\WI}$ is covering.
\end{proposition}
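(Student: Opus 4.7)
The plan rests on the fact, noted in the paragraph preceding Proposition \ref{11111}, that $(^\perp\WI,\WI)$ is a complete cotorsion pair cogenerated by the representative set of super finitely presented $R$-modules. In particular, $^\perp\WI$ is precovering (every module admits a special $^\perp\WI$-precover) and $\WI$ is preenveloping (every module admits a special $\WI$-preenvelope). The key move will be to use the hypothesis to upgrade the pair to a perfect cotorsion pair.

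My first step is to strengthen the hypothesis on $^\perp\WI$ from closure under pure quotients to closure under direct limits. The left class of any cotorsion pair is closed under arbitrary coproducts, since $\operatorname{Ext}^1_R$ converts coproducts in the first variable into products of abelian groups. Given a directed system $(M_i)_{i\in I}$ in $^\perp\WI$, the canonical surjection $\bigoplus_{i\in I}M_i \twoheadrightarrow \varinjlim_i M_i$ is well known to be a pure epimorphism; combined with the standing hypothesis, this forces $\varinjlim_i M_i \in {^\perp\WI}$. Hence $^\perp\WI$ is closed under direct limits.

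With this in hand, the covering conclusion is immediate from Enochs' classical theorem: a precovering class closed under direct limits is a covering class. Thus $^\perp\WI$ is covering, which handles the second assertion. For the enveloping conclusion, I would appeal to the result (see e.g.\ \cite{EJ}) that, for a complete cotorsion pair $(\mathcal{A},\mathcal{B})$ whose left class is closed under direct limits, the right class is enveloping. Concretely, given an $R$-module $M$ and a special $\WI$-preenvelope $0\to M\to W\to V\to 0$ with $V\in{^\perp\WI}$, one takes a $^\perp\WI$-cover $Q\to V$ (available by the preceding paragraph), whose kernel is weak injective by Wakamatsu's lemma, and then performs a pullback along $W\to V\leftarrow Q$ to trim $W$ down to a genuine $\WI$-envelope of $M$.

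The main obstacle will be precisely this last refinement step: verifying that the pullback construction yields a $\WI$-envelope (and not merely a preenvelope) requires a careful application of Wakamatsu's lemma together with the completeness of the cotorsion pair, but once the direct-limit closure is in place this is classical. Everything else reduces to a clean appeal to the Enochs machinery for complete cotorsion pairs.
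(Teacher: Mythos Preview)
Your approach is correct and takes a genuinely different route from the paper's. You pass from closure under pure quotients to closure under direct limits via the pure epimorphism $\bigoplus_i M_i\twoheadrightarrow\varinjlim_i M_i$, and then invoke the classical fact that a complete cotorsion pair whose left class is closed under direct limits is perfect; this immediately gives both conclusions. The paper instead verifies that ${^\perp\WI}$ is closed under pure \emph{submodules}: given a pure exact sequence $0\to L\to M\to N\to 0$ with $M\in{^\perp\WI}$, the hypothesis forces $N\in{^\perp\WI}$, and then one shows $\Ext^2_R(N,W)=0$ for every $W\in\WI$ by using the coresolving behaviour of $\WI$ (the cokernel of $W\hookrightarrow E$ with $E$ injective is again weak injective), so the long exact sequence yields $L\in{^\perp\WI}$. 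With closure under extensions, coproducts, pure submodules, pure quotients, and $R\in{^\perp\WI}$ in hand, the paper appeals to \cite[Thm.~3.4]{HJ08}. Your argument is shorter and sidesteps the $\Ext^2$ computation entirely; the paper's argument has the merit of making the coresolving structure of $\WI$ (equivalently, the hereditary nature of the pair) explicit, which is of independent interest elsewhere in the section.
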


\begin{proof}
Clearly, the class ${^\perp\WI}$ is closed under extensions and direct sums. We will argue that ${^\perp\WI}$ is closed under pure submodules under assumption. Given a pure exact sequence $0\rightarrow L\rightarrow M\rightarrow N\rightarrow 0$ with $M\in{^\perp\WI}$. By assumption, $N\in{^\perp\WI}$, i.e. $\Ext^1_R(N,W)=0$ for any $W\in\WI$. Now, for any $W\in\WI$, consider the induced exact sequence
$$
\Ext^1_R(N,W)\rightarrow \Ext^1_R(M,W)\rightarrow \Ext^1_R(L,W)\rightarrow \Ext^2_R(N,W).
$$
We claim that $\Ext^2_R(N,W)=0$. Indeed, consider an exact sequence $0\rightarrow W\rightarrow E\rightarrow W'\rightarrow 0$ with $E$ injective. In fact, $W'$ is also weak injective from \cite[Prop. 3.1]{GW} and the long exact sequence which is induced by applying $\operatorname{Hom}_R(F,-)$, where $F$ is any super finitely presented $R$-module. Now applying the functor $\operatorname{Hom}_R(N,-)$ to it, we have the following exact sequence
$$
\Ext^1_R(N,E)\rightarrow \Ext^1_R(N,W')\rightarrow \Ext^2_R(N,W)\rightarrow \Ext^2_R(N,E),
$$
which implies that $\Ext^2_R(N,W)\cong \Ext^1_R(N,W')=0$ since $W'\in \WI$.

Now, from the previous long exact sequence, we get that $\Ext^1_R(L,W)\cong  \Ext^1_R(M,W)$. Therefore, $M\in{^\perp\WI}$ implies $L\in{^\perp\WI}$, as desired.
Moreover, since ${^\perp\WI}$ contains $R$, it follows from \cite[Thm. 3.4]{HJ08} that ${^\perp\WI}$ is covering and $\WI=({^\perp\WI})^\perp$ is enveloping.
\end{proof}

Under the
assumption of the existence of  weak injective envelopes,  we will have
the following conclusions.

\begin{proposition}\label{11}
Let $N$ be a weak injective  $R$-module, and $M\leqslant N$. If $M$ has a weak injective envelope, then the following are equivalent:

$(1)$ The inclusion $i:M\hookrightarrow N$ is a weak injective envelope of $M$;

$(2)$ The quotient module $N/M$ is in $^\perp \mathcal{WI}$, and there are no direct summands $N_1$ of $N$ with $N_1\neq N$ and $M\leqslant N_1$;

$(3)$ The quotient module $N/M$ is in $^\perp \mathcal{WI}$, and for any epimorphism $\varphi:N/M\rightarrow Q$ such that $\varphi \pi$ is a retraction, where $\pi:N\rightarrow N/M$ is the canonical map, we have $Q=0$;

$(4)$ The quotient module $N/M$ is in $^\perp \mathcal{WI}$, and any endomorphism $\tau$ of $N$ such that $\tau i=i$ is a monomorphism;

$(5)$ The quotient module $N/M$ is in $^\perp \mathcal{WI}$, and there are no nonzero submodules $N'$ of $N$ such that $M\cap N'=0$ and $N/(M\oplus N')$ is  in $^\perp \mathcal{WI}$.
\end{proposition}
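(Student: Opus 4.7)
The hypothesis that $M$ has a weak injective envelope supplies an envelope $\epsilon: M \to E$, which will be the principal tool for the backward implications. Throughout, Wakamatsu's Lemma guarantees that the condition $N/M \in {^\perp\WI}$ appearing in each of (2)--(5) is automatic once (1) holds, and conversely whenever that condition is assumed, $i: M\hookrightarrow N$ is automatically a weak injective preenvelope (since $\operatorname{Ext}^1_R(N/M,W)=0$ for all $W \in \WI$ forces every $f: M\to W$ to extend along $i$). The plan is to prove (1) $\Rightarrow$ (2), (4), (5) directly from the envelope property, to establish (2) $\Leftrightarrow$ (3) and (5) $\Rightarrow$ (2) by diagram chases, and finally to close the loop with (2) $\Rightarrow$ (1) and (4) $\Rightarrow$ (1) by a comparison with $\epsilon$.

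For (1) $\Rightarrow$ (2), given a decomposition $N = N_1 \oplus N_2$ with $M \subseteq N_1$, the summand $N_1$ is weak injective, so the preenvelope property yields $g: N \to N_1$ extending $M \hookrightarrow N_1$; then $\iota g$ is an endomorphism of $N$ satisfying $\iota g i = i$, hence an automorphism by the envelope property, whose image lies in $N_1$, so $N_1 = N$. The equivalence (2) $\Leftrightarrow$ (3) is the observation that $\varphi\pi: N \to Q$ is a retraction exactly when $\ker(\varphi\pi)$---which always contains $M$---is a direct summand of $N$. For (5) $\Rightarrow$ (2), a decomposition $N = N_1 \oplus N'$ with $M \subseteq N_1$ and $N' \neq 0$ gives $N/(M \oplus N') \cong N_1/M$, a direct summand of $N/M \in {^\perp\WI}$, hence itself in ${^\perp\WI}$, contradicting (5) since $M \cap N' = 0$ and $N' \neq 0$.

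For (1) $\Rightarrow$ (5), given $N' \subseteq N$ with $M \cap N' = 0$ and $N/(M+N') \in {^\perp\WI}$, consider the short exact sequence
\begin{equation*}
0 \longrightarrow (M+N')/N' \longrightarrow N/N' \longrightarrow N/(M+N') \longrightarrow 0.
\end{equation*}
Applying $\operatorname{Hom}_R(-,N)$ and using $N \in \WI$ together with $N/(M+N') \in {^\perp\WI}$ shows that $\operatorname{Ext}^1_R(N/(M+N'), N) = 0$, so the natural map $(M+N')/N' \cong M \hookrightarrow N$ (well-defined because $M \cap N' = 0$) extends to some $s: N/N' \to N$. The endomorphism $\tau := s\pi_{N'}$ then satisfies $\tau i = i$ and $\ker\tau \supseteq N'$; the envelope property of $i$ forces $\tau$ to be an automorphism, so $N' = 0$.

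The crux is (2) $\Rightarrow$ (1) and its twin (4) $\Rightarrow$ (1). In both cases $i$ is a weak injective preenvelope, so $\epsilon$ factors as $\epsilon = k i$ for some $k: N \to E$, while the preenvelope property of $\epsilon$ yields $h: E \to N$ with $h\epsilon = i$. Then $\phi := kh$ satisfies $\phi\epsilon = \epsilon$, hence is an automorphism of $E$; this makes $h$ split monic via $\phi^{-1}k$ and produces a decomposition $N = h(E) \oplus \ker k$ with $h(E) \supseteq M$. Under (2), the summand $h(E) \supseteq M$ must be all of $N$, so $h$ is an isomorphism. Under (4), the endomorphism $hk$ satisfies $hki = i$ and is therefore monic; but $hk|_{\ker k} = 0$, forcing $\ker k = 0$, so $k$ (hence $h$) is an isomorphism. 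In either case $i = h\epsilon$ is the composition of a weak injective envelope with an isomorphism, and so is itself a weak injective envelope. The main obstacle is orchestrating this comparison so that the rather different minimality conditions embedded in (2) and (4) each suffice, through different arguments, to collapse the splitting $N = h(E) \oplus \ker k$ down to $N = h(E)$.
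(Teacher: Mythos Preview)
Your proof is correct and follows essentially the same strategy as the paper: Wakamatsu's Lemma for the cokernel condition, and comparison with a fixed envelope $\epsilon:M\to E$ to obtain the splitting $N=h(E)\oplus\ker k$ that drives the backward implications $(2)\Rightarrow(1)$ and $(4)\Rightarrow(1)$. The only organizational differences are that you prove $(2)\Rightarrow(1)$ explicitly where the paper simply cites \cite[Cor.~1.2.3]{Xu}, and you close the cycle via $(5)\Rightarrow(2)$ (using that ${}^\perp\mathcal{WI}$ is closed under direct summands), which is a bit cleaner than the paper's direct $(5)\Rightarrow(1)$ argument.
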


\begin{proof}
(1) $\Rightarrow$ (2) follows from \cite[Prop. 7.2.4]{EJ} and \cite[Cor. 1.2.3]{Xu}.

(2) $\Rightarrow$ (1). Since  $N/M$ is in $^\perp \mathcal{WI}$, it is easy to verify that $i:M\hookrightarrow N$ is a weak injective preenvelope of $M$. Moreover, it is a weak injective envelope of $M$ by \cite[Cor. 1.2.3]{Xu} and hypothesis.

(2) $\Rightarrow$ (3). Since $\varphi \pi$ is a retraction, there exists a monomorphism $\psi$ such that $N=\mbox{Ker}(\varphi \pi)\oplus\psi(Q)$. Note that $M\leqslant\mbox{Ker}(\varphi \pi)$, so $M=\mbox{Ker}(\varphi \pi)$ by assumption. Hence $\psi(Q)=0$, and thus $Q=0$.

(3) $\Rightarrow$ (2). Assume that $N=N_1\oplus Q$ such that $M\leqslant N_1$. Let $p:N\rightarrow Q$ be the canonical projection. Then $M\leqslant \mbox{Ker}p$, and thus there exists $\gamma:N/M\rightarrow Q$ such that $\gamma\pi=p$. So $Q=0$ by hypothesis. Therefore, $N=N_1$, as desired.

(1) $\Rightarrow$ (4) is trivial.

(4) $\Rightarrow$ (1). Since  $N/M$ is in $^\perp \mathcal{WI}$, $i:M\hookrightarrow N$ is a special weak injective preenvelope of $M$. Let $\epsilon:M\rightarrow W$ be any weak injective envelope of $M$. Then there exist two maps $\delta:N\rightarrow W$ and $\sigma:W\rightarrow N$ such that the following diagrams commute:
$$
\xymatrix{
M\ar[r]^i\ar[rd]^\epsilon&N\ar@{.>}[d]^\delta\\
&W
}\ \ \ \ \ \mbox{ and }\ \ \ \
\xymatrix{
M\ar[r]^\epsilon\ar[rd]^i&W\ar@{.>}[d]^\sigma\\
&N
}
$$
that is, $\epsilon=\delta i$ and $i=\sigma\epsilon$. So $\epsilon=\delta\sigma\epsilon$ and $i=\sigma\delta i$, and thus $\delta\sigma$ is an isomorphism, which shows that $\delta$ is epic. On the other hand, $\sigma\delta$ is monic by assumption, which shows that $\delta$ is monic. Therefore, $\delta$ is an isomorphism, and hence $i$ is a weak injective envelope of $M$.

(1) $\Rightarrow$ (5). Assume that there is a nonzero submodule $N'$ of $N$ such that $M\cap N'=0$ and $N/(M\oplus N')$ is  in $^\perp \mathcal{WI}$. Let $\pi:N\rightarrow N/N'$ be the canonical map. Since $N/(M\oplus N')$ is  in $^\perp \mathcal{WI}$ and $N$ is weak injective, there exists $\theta:N/N'\rightarrow N$ such that the following diagram commute:
$$
\xymatrix{
&&N\ar[rd]^\pi&&&\\
0\ar[r]&M\ar[ru]^i\ar[rr]^{\pi i}\ar[rd]^i&&N/N'\ar[r]\ar@{.>}[ld]^\theta&N/(N'\oplus M)\ar[r]&0\\
&&N&&&
}
$$
that is, $\theta\pi i=i$. Moreover, since $i$ is a weak injective envelope, $\theta\pi$ is an isomorphism, and hence $\pi$ is an isomorphism. But this is a contradiction with $\pi(N')=0$. So (5) holds.

(5) $\Rightarrow$ (1). Let $i:M\rightarrow N$ be the canonical inclusion. Since $N/M$ is in $^\perp \mathcal{WI}$, the map $i$ is a special weak injective preenvelope. Choose a weak injective envelope $\epsilon:M\rightarrow W$ of $M$. Then we have the following commutative diagram:
$$
\xymatrix{
0\ar[r]&M\ar[r]^\epsilon\ar[rd]_i&W\ar[r]^\pi\ar@{.>}[d]^f&C\ar[r]&0\\
&&N\ar@<1ex>@{.>}[u]^g\ar[ru]_{\alpha=\pi g}&&
}
$$
that is, $f\epsilon=i$, and $g i=\epsilon$. So $g f\epsilon=\epsilon$. Note that $g f$ is an isomorphism since $\epsilon$ is an envelope. Without loss of generality, we may assume that $g f=1$. Let $\alpha=\pi g$. Then $\alpha$ is epic and $M\cap \mbox{Ker}g=0$. We claim that $M\oplus \mbox{Ker}g=\mbox{Ker}\alpha$. Indeed, it is obvious that $M\oplus \mbox{Ker}g\subseteq\mbox{Ker}\alpha$. Conversely, let $a\in \mbox{Ker}\alpha$. Then $0=\alpha(a)=\pi g(a)$, that is, $g(a)\in \mbox{Ker}\pi$. Since $\mbox{Ker}\pi=\mbox{Im}\epsilon$, there exists some $b\in M$ such that $\epsilon(b)=g(a)$. So $fg(a)=f\epsilon(b)=i(b)=b$, $g(a)=gfg(a)=g(fg(a))=g(b)$. Thus $a\in M\oplus\mbox{Ker}g$, and hence $\mbox{Ker}\alpha\subseteq M\oplus \mbox{Ker}g$. Consequently, $M\oplus \mbox{Ker}g=\mbox{Ker}\alpha$.

Moreover, $C$ is in $^\perp \mathcal{WI}$ by \cite[Prop. 7.2.4]{EJ}, and $N/(M\oplus \mbox{Ker}g)= N/\mbox{Ker}\alpha\cong C$. By assumption, $\mbox{Ker}g=0$. Thus $g$ is an isomorphism. Therefore, $i$ is a weak injective envelope of $M$.
\end{proof}

Recall from \cite{AF} that a \emph{minimal injective extension} of an $R$-module $M$ is a monomorphism $i:M\rightarrow E$ with $E$ injective such that for every  monomorphism $\varphi:M\rightarrow Q$ with $Q$ injective, there exists a monomorphism $\phi:E\rightarrow Q$ such that $\varphi=\phi i$.

Similarly, we define the \emph{minimal weak injective extension} of modules as follows:  A minimal weak injective extension of an $R$-module $M$ is a monomorphism $i:M\rightarrow W$ with $W$ weak injective such that for every  monomorphism $\varphi:M\rightarrow Q$ with $Q$ weak injective, there exists a monomorphism $\phi:W\rightarrow Q$ such that $\varphi=\phi i$.

\begin{corollary}
Let $M$ be a weak injective  $R$-module, $M'$ a submodule of $M$ such that $M/M'$ is in $^\perp \mathcal{WI}$ and $M'$ has a weak injective envelope.

$(1)$ If $M'$ is an essential submodule of $M$, then the inclusion $i:M'\rightarrow M$ is a weak injective envelope of $M'$.

$(2)$ If  $i:M'\rightarrow M$ is a minimal weak injective extension of $M'$, then $i$ is a weak injective envelope of $M'$.
\end{corollary}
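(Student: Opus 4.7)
The plan is to deduce both parts from Proposition \ref{11}, using the weak injective envelope $\epsilon:M'\rightarrow W$ whose existence is guaranteed by hypothesis.

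For part (1), I would verify condition (2) of Proposition \ref{11}. The first half, $M/M'\in{}^\perp\WI$, is already given. For the second half, suppose on the contrary that $M=M_1\oplus M_2$ with $M'\subseteq M_1$ and $M_2\neq 0$. Then $M'\cap M_2\subseteq M_1\cap M_2=0$, contradicting the essentialness of $M'$ in $M$. Hence no proper direct summand of $M$ contains $M'$, condition (2) is satisfied, and the implication (2)$\Rightarrow$(1) of Proposition \ref{11} delivers the claim.

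For part (2), I would argue directly, without routing through the five equivalent conditions. First, any weak injective preenvelope is monic, because $M'$ embeds in its injective envelope (which is weak injective) and that embedding must factor through the preenvelope; in particular $\epsilon:M'\rightarrow W$ is a monomorphism into a weak injective module. Applying the minimal weak injective extension property of $i$ to $\varphi=\epsilon$ produces a monomorphism $\phi:M\rightarrow W$ with $\phi i=\epsilon$. Conversely, since $M$ is weak injective and $\epsilon$ is an envelope, there exists $g:W\rightarrow M$ with $g\epsilon=i$. Composing these, $\phi g\epsilon=\phi i=\epsilon$, so $\phi g$ is an automorphism of $W$ by the defining property of the envelope. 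Hence $\phi$ is epic, and being also monic it is an isomorphism. Therefore $i=\phi^{-1}\epsilon$, which is a weak injective envelope of $M'$.

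The main subtlety I anticipate is, in part (1), selecting the most convenient equivalent form of Proposition \ref{11} (condition (5) would also succeed via the essentialness, but (2) is cleaner), and, in part (2), recognizing that the interplay between the minimality of $i$ and the universal property of $\epsilon$ collapses $\phi$ to an isomorphism, transferring the envelope status from $\epsilon$ to $i$ without ever invoking the conditions of Proposition \ref{11} explicitly.
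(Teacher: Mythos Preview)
Your argument for part (1) matches the paper's: both invoke Proposition~\ref{11}, and the verification of condition~(2) via essentialness is exactly the content the paper leaves implicit when it writes ``follows from Proposition~\ref{11}''.

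For part (2) your route is genuinely different from the paper's. The paper uses the injective envelope $\varepsilon:M'\rightarrow E(M')$ in place of your weak injective envelope $\epsilon:M'\rightarrow W$: the minimal weak injective extension property yields a monomorphism $\delta:M\rightarrow E(M')$ with $\delta i=\varepsilon$, and since $\varepsilon$ is essential and $\delta$ is monic, a standard argument (cited from Anderson--Fuller) forces $i$ itself to be essential; the paper then applies part~(1). Your approach instead compares $i$ directly against the envelope $\epsilon$, obtains maps $\phi,g$ in both directions, and uses the envelope property to force $\phi$ to be an isomorphism. Your proof is self-contained (no external reference needed) and, notably, never uses the hypothesis $M/M'\in{}^\perp\mathcal{WI}$, so it actually shows something slightly stronger for part~(2). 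The paper's route, on the other hand, explains \emph{why} part~(2) holds by reducing it to part~(1) via essentialness, which is conceptually illuminating even if it costs an extra hypothesis and an outside citation.
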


\begin{proof}
(1) follows from Proposition $\ref{11}$.

(2) Let $\varepsilon:M'\rightarrow E(M')$ be an injective envelope of $M'$. Since $i:M'\rightarrow M$ is a minimal weak injective extension of $M'$, by  definition there exists $\delta:M\rightarrow E(M')$ such that $\delta i=\varepsilon$, that is, there is the following commutative diagram:
$$
\xymatrix{
  M' \ar[dr]_{i} \ar[rr]^{\varepsilon} &&   E(M')     \\
 & M \ar@{.>}[ur]_{\delta}    &                 }
$$
Note that the injective envelope $\varepsilon$ of $M'$ is in fact an essential weak injective extension of $M'$. As a similar argument to the proof of \cite[Cor. 18.11]{AF}, we have that $i$ is also essential, that is, $M'$
 is an essential submodule of $M$. Thus $i$ is a weak injective envelope of $M$ by (1).\end{proof}

\begin{proposition}
If an $R$-module $M$ has a $^\perp \mathcal{WI}$-cover, then $M$ has a special weak injective preenvelope $\eta:M\rightarrow W$ such that $W$ has a $^\perp \mathcal{WI}$-cover.
\end{proposition}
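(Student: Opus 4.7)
The plan is to start from the given ${}^\perp\WI$-cover $\rho\colon Q\to M$ and to take $W$ to be any special weak injective preenvelope of $M$ (which exists because the cotorsion pair $({}^\perp\WI,\WI)$ is cogenerated by a set, hence complete). By Wakamatsu's Lemma, $L:=\operatorname{Ker}\rho\in\WI$, and the preenvelope fits into a short exact sequence $0\to M\xrightarrow{\eta}W\to V\to 0$ with $W\in\WI$ and $V\in{}^\perp\WI$. I claim that this $W$ automatically has a ${}^\perp\WI$-cover, and I plan to construct it by ``lifting'' the sequence $0\to L\to Q\to M\to 0$ along $\eta$.

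More precisely, I would produce a module $\tilde Q$ fitting into a morphism of short exact sequences
$$
\xymatrix{
0\ar[r]& L\ar@{=}[d]\ar[r]& Q\ar[r]^\rho\ar[d]& M\ar[r]\ar[d]^\eta & 0\\
0\ar[r]& L\ar[r]& \tilde Q\ar[r]^{\tilde\rho}& W\ar[r]& 0.
}
$$
Such a $\tilde Q$ exists precisely when the extension class $[Q]\in\Ext^1_R(M,L)$ lies in the image of $\eta^*\colon\Ext^1_R(W,L)\to\Ext^1_R(M,L)$. Applying $\Hom_R(-,L)$ to $0\to M\to W\to V\to 0$ gives the exact piece $\Ext^1_R(V,L)\to\Ext^1_R(W,L)\xrightarrow{\eta^*}\Ext^1_R(M,L)\to\Ext^2_R(V,L)$; the leftmost group vanishes because $V\in{}^\perp\WI$ and $L\in\WI$, so the real issue is the hereditariness-type vanishing $\Ext^2_R(V,L)=0$. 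To get this, I would embed $L$ in an injective $E$; since the first syzygy $\Omega F$ of any super finitely presented $F$ is again super finitely presented, one has $\Ext^2_R(F,L)=\Ext^1_R(\Omega F,L)=0$, which forces $E/L\in\WI$, and a dimension shift along $0\to L\to E\to E/L\to 0$ then yields $\Ext^2_R(V,L)\cong\Ext^1_R(V,E/L)=0$. This hereditariness is the main technical obstacle.

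Once $\tilde Q$ is in hand, the $3\times 3$ lemma also furnishes a short exact sequence $0\to Q\to\tilde Q\to V\to 0$, and the closure of ${}^\perp\WI$ under extensions places $\tilde Q\in{}^\perp\WI$. Combined with $L\in\WI$, this already identifies $\tilde\rho\colon\tilde Q\to W$ as a special ${}^\perp\WI$-precover.

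To upgrade this precover to a cover, I would take any $\phi\colon\tilde Q\to\tilde Q$ with $\tilde\rho\phi=\tilde\rho$. Since $\tilde\rho$ induces the identity map $\tilde Q/Q=V\to W/\eta(M)=V$, one obtains $\tilde\rho^{-1}(\eta(M))=Q$, which forces $\phi(Q)\subseteq Q$ and makes the induced map on $V$ the identity. Because $\eta$ is monic, $\rho\phi|_Q=\rho$, and the cover property of $\rho$ then makes $\phi|_Q$ an automorphism; the Five Lemma applied to $0\to Q\to\tilde Q\to V\to 0$ finally shows $\phi$ itself is an automorphism, so $\tilde\rho$ is the desired ${}^\perp\WI$-cover of $W$.
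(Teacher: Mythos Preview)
Your proof is correct, but it takes a different route from the paper's. The paper starts with a special weak injective preenvelope of $Q$ (not of $M$): an exact sequence $0\to Q\to W\to L\to 0$ with $W\in\WI$ and $L\in{}^\perp\WI$. It then forms the pushout $N$ of $K\hookrightarrow Q$ and $Q\hookrightarrow W$, obtaining simultaneously the special preenvelope $\eta\colon M\to N$ (from the column $0\to M\to N\to L\to 0$) and the special ${}^\perp\WI$-precover $\xi\colon W\to N$ (from the row $0\to K\to W\to N\to 0$, using $Q,L\in{}^\perp\WI$ to place $W\in{}^\perp\WI$). The cover property of $\xi$ is then checked by an element chase exploiting the bicartesian square, which amounts to your Five Lemma argument. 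By contrast, you fix an \emph{arbitrary} special preenvelope $0\to M\to W\to V\to 0$ of $M$ and lift the extension $0\to L\to Q\to M\to 0$ through $\eta^*$, invoking the hereditariness $\Ext^2_R(V,L)=0$ explicitly to guarantee the lift exists; your $\tilde Q$ is then the pullback of $\tilde\rho$ along $\eta$, which is what drives the cover verification. Both arguments ultimately rely on the same hereditariness (the paper uses it tacitly when asserting that $N\in\WI$ from $K,W\in\WI$). Your approach has the mild advantage of showing that \emph{every} special weak injective preenvelope of $M$ has the desired property, whereas the paper only produces one such; the paper's pushout is slightly more economical in that a single diagram yields both pieces at once without appealing to surjectivity of $\eta^*$.
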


\begin{proof}
Assume that $\theta:Q\rightarrow M$ is a $^\perp \mathcal{WI}$-cover of $M$. Let $K=\mbox{Ker}\theta$. Then $K$ is weak injective by \cite[Cor. 7.2.3]{EJ}. Note that $Q$ has a special weak injective preenvelope, that is, there is an exact sequence $0\rightarrow Q\stackrel{f}\rightarrow W\stackrel{g}\rightarrow L\rightarrow 0$ with $W$ weak injective and $L \in {^\perp \mathcal{WI}}$.

Consider the following push-out diagram:
$$
\xymatrix{
&&0\ar[d]&0\ar[d]&\\
0\ar[r]&K\ar[r]\ar@{=}[d]&Q\ar[r]^\theta\ar[d]^f&M\ar[r]\ar[d]^\eta&0\\
0\ar[r]&K\ar[r]&W\ar[r]^\xi\ar[d]^g&N\ar[r]\ar[d]&0\\
&&L\ar@{=}[r]\ar[d]&L\ar[d]&\\
&&0&0&
}
$$
It follows easily from the third row in the above diagram that $N$ is weak injective. So $\eta:M\rightarrow N$ is a special weak injective preenvelope of $M$. Moreover, it is easy to verify that $W$ is in $^\perp \mathcal{WI}$ from the third column in the above diagram. Thus $\xi:W\rightarrow N$ is a special $^\perp \mathcal{WI}$-precover of $N$. In the following, we will show that $\xi:W\rightarrow N$ is in fact a  $^\perp \mathcal{WI}$-cover of $N$.

Let $\gamma$ be an endomorphism of $W$ with $\xi\gamma=\xi$. Then $\xi(\gamma f)=(\xi\gamma) f=\xi f=\eta\theta$. Note that the upper right-hand square of the above diagram is bicartesian, in particular, it is  a pull-back. So there exists $h:Q\rightarrow Q$ such that $\theta h=\theta$ and $fh=\gamma f$, that is, there is the following commutative diagram:
$$
\xymatrix{
  Q \ar@/_/[ddr]_{\gamma f} \ar@/^/[drr]^{\theta}
    \ar@{.>}[dr]|-{h}                   \\
   & Q \ar[d]^{f} \ar[r]_{\theta}
                      & M \ar[d]_{\eta}    \\
   & D \ar[r]^{\xi}     & W               }
$$
Since $\theta$ is a cover, $h$ is an isomorphism by the equality $\theta h=\theta$. Let $\gamma(w')=0$ for some  $w'\in W$. Then $\xi(w')=\xi\gamma(w')=0$, and so $g(w')=0$. Thus $w'\in \mbox{Im}f$, that is, there exists some $q\in Q$ such that $w'=f(q)$. So $fh(q)=\gamma f(q)=\gamma(w')=0$, and hence $q=0$ since $fh$ is monic. Therefore, $w'=0$, which shows that $\gamma$ is monic. On the other hand, since $\xi\gamma(w)=\xi(w)$ for any $w\in W$, that is, $w-\gamma(w)\in \mbox{Ker}\xi (\subseteq \mbox{Ker}g)$, there exists some $q'\in Q$ such that $f(q')=w-\gamma(w)$. Thus $w=f(q')+\gamma(w)=\gamma f h^{-1}(q')+\gamma(w)=\gamma(f h^{-1}(q')+w)$, and hence $\gamma$ is epic. Therefore, $\gamma$ is an isomorphism, as desired.
\end{proof}

\section*{Acknowledgements}

The author would like to thank the referees for their careful reading and valuable remarks that improved the presentation of this work. He also thanks Professor Zhaoyong Huang for his encouragement. This research was partially supported by NSFC (Grant No. 11571164) and a Project Funded
by the Priority Academic Program Development of Jiangsu Higher Education Institutions.


\begin{thebibliography}{99}

\bibitem{AEGO} Aldrich, S. T., Enochs, E. E., Jenda, O. M. G., Oyonarte, L.: {Envelopes and covers
by modules of finite injective and projective dimensions}. J. Algebra \textbf{242}, 447-459 (2001)

\bibitem{AF} Anderson, F.  W., Fuller, K. R.: Rings and Categories of Modules. Springer-Verlag, New York (1974)

\bibitem{AS}  Auslander, M., Smal{\o}, S. O.: {Preprojective modules over Artin algebras}.  J. Algebra
\textbf{66}, 61-122 (1980)

\bibitem{BGH} Bravo, D., Gillespie, J., Hovey, M.: {The stable module category of a general ring}.  arXiv:1405.5768, (2014)

\bibitem{Di} Ding, N.: {On envelopes with the unique mapping property}. Comm. Algebra  \textbf{24}(4), 1459-1470 (1996)

\bibitem{ET} Eklof, P., Trlifaj, J.: {Covers induced by Ext$^1$}. J. Algebra  \textbf{231}, 640-651 (2000)

\bibitem{En} Enochs, E. E.: {Injective and flat covers, envelopes and resolvents}. Israel J. Math.
\textbf{39}, 189-209 (1981)

\bibitem{EJ} Enochs, E. E., Jenda, O. M. G.: Relative Homological Algebra. de Gruyter Exp. Math., vol. 30. Walter de Gruyter, Berlin (2000)

\bibitem{GH} Gao, Z., Huang, Z.: {Weak injective covers and dimension of modules}. Acta Math. Hungar. \textbf{147}(1), 135-157 (2015)

 \bibitem{GW0} Gao, Z., Wang, F.: {All Gorenstein hereditary rings are coherent}. J. Algebra
Appl.  \textbf{13}(4), 1350140 (5 pages) (2014)

\bibitem{GW} Gao, Z., Wang, F.: {Weak injective and weak flat modules}. Comm. Algebra {\bf 43}, 3857-3868 (2015)

\bibitem{GRT} Garc\'{\i}a Rozas, J. R., Torrecillas, B.: { Relative injective covers}. Comm. Algebra
\textbf{22}, 2925-2940 (1994)

\bibitem{GT} G\"{o}bel, R., Trlifaj, J.: Approximations and Endomorphism Algebras of Modules, de Gruyter Exp. Math., vol. 41. Walter de Gruyter, Berlin (2006)

\bibitem{Ho} Holm, H.: {Gorenstein homological dimensions}. J. Pure Appl. Algebra \textbf{189}, 167-193 (2004)

\bibitem{HJ08} Holm, H., J{\o}rgensen, P.: {Covers, precovers, and purity}. Illinois J. Math. \textbf{52}(2), 691-703 (2008)

\bibitem{HJ09} Holm, H., J{\o}rgensen, P.: {Cotorsion pairs induced by duality pairs}. J. Commut. Algebra \textbf{1}(4), 621-633 (2009)

\bibitem{Ma} Maddox, B. H.: {Absolutely pure modules}. Proc. Amer. Math. Soc.  \textbf{18}, 155-158 (1967)

\bibitem{Mao} Mao, L.: {Relative homological groups over coherent rings}. Quaest. Math. \textbf{35}, 331-352 (2012)

\bibitem{MD} Mao, L., Ding, N.: {Notes on FP-projective modules and FP-injective modules}. Advances in Ring Theory, Proceedings of the 4th China-Japan-Korea International Symposium (J. Chen, N. Ding and H. Marubayashi, eds.), pp. 151-166, World Scientific, Singapore (2005)

\bibitem{Me} Megibben, C.: {Absolutely pure modules}. Proc. Amer. Math. Soc.  \textbf{26}, 561-566 (1970)

\bibitem{ODL} Ouyang, B., Duan, L., Li, W.: {Relative projective dimensions}. Bull. Malays. Math. Sci. Soc. (2) \textbf{37}(3), 865-879 (2014)

\bibitem{Pi} Pinzon, K.: {Absolutely pure covers}. Comm. Algebra
\textbf{36}, 2186-2194 (2008)

\bibitem{Ro} Rotman, J. J.:     {An Introduction to Homological
Algebra}.   Springer, New York (2009)

\bibitem{St} Stenstr\"{o}m, B.: {Coherent rings and FP-injective modules}. J. London Math. Soc.
\textbf{2}, 323-329 (1970)

\bibitem{Xu} Xu, J.: Flat Covers of Modules. Lecture Notes in Math. 1634. Springer Verlag, Berlin, Heidelberg, New York (1996)

\end{thebibliography}
\end{document}